\documentclass[12pt]{article}

\usepackage[a4paper,left=17mm,top=20mm,right=17mm,bottom=25mm]{geometry}

\usepackage{amsmath, amsfonts, amssymb, amsthm}
\usepackage{mathtools}
\usepackage[mathscr]{euscript}

\usepackage{graphicx}
\usepackage{enumitem}
\usepackage{float}

\usepackage{tensor}

\usepackage{authblk}
\usepackage[numbers, sort&compress]{natbib}

\usepackage{stmaryrd}

\usepackage[colorlinks]{hyperref}
\hypersetup{
  citecolor=blue,
  linkcolor=blue,
}

\usepackage[font=footnotesize,width=.85\textwidth,labelfont=bf]{caption}

\newtheorem{theorem}{Theorem}[section]
\newtheorem{lemma}[theorem]{Lemma}
\newtheorem{corollary}[theorem]{Corollary}
\newtheorem{definition}[theorem]{Definition}
\newtheorem{proposition}[theorem]{Proposition}
\newtheorem{example}[theorem]{Example}
\newtheorem{problem}[theorem]{Problem}
\newtheorem{remark}[theorem]{Remark}


\usepackage[dvipsnames]{xcolor}
\definecolor{gray}{rgb}{0.67, 0.67, 0.67}
\definecolor{bgreen}{rgb}{0.2, 0.7, 0.2}

\DeclareMathOperator{\lprod}{\circ}
\DeclareMathOperator{\cprod}{\Box}
\DeclareMathOperator{\dprod}{\times}
\DeclareMathOperator{\sprod}{\boxtimes}

\DeclareFontFamily{OT1}{pzc}{}
\DeclareFontShape{OT1}{pzc}{m}{it}{<-> s * [1.100] pzcmi7t}{}
\DeclareMathAlphabet{\mathpzc}{OT1}{pzc}{m}{it}

\newcommand{\matchno}{\mathpzc{m}}
\newcommand{\unmatchno}{\mathpzc{u}}

\newcommand{\MB}{M^{\Box}}
\newcommand{\MU}{M^{\mathpzc{U}}}

\newcommand{\lefts}[2]{\prescript{#1}{}{\hspace{-0.02in}#2}}
\newcommand{\leftsprime}[2]{\prescript{#1}{}{\hspace{-0.04in}#2}}

\newcommand{\fM}{\mathpzc{M}^{\mathpzc{x}}} 

\newcommand{\fbast}{\mathpzc{M}^{\boxast}} 
\newcommand{\fast}{\mathpzc{M}^{\ast}} 
\newcommand{\fcast}{\mathpzc{M}^{\circledast}} 

    {\begin{description}[leftmargin = 0.2cm, labelsep = 0.2cm]}
    {\end{description}}

\newcommand{\PROBLEM}[1]{{\sc #1}}

\providecommand{\keywords}[1]{\textbf{\textit{Keywords: }} #1}

\title{\sc Construction of $k$-matchings and $k$-regular subgraphs in graph products\thanks{Dedicated to Wilfried Imrich on occasion of his 80th birthday.
   We have to blame Wilfried. If Wilfried would not have infected MH
   with his enthusiasm about graph products and thus, by way of transitivity,
   also AL, this paper would never have been written.}}


\author[ ]{Anna Lindeberg}
\author[ ]{Marc Hellmuth}

\affil[ ]{Department of Mathematics, Faculty of Science, Stockholm University,
  SE - 106 91 Stockholm,   Sweden } 


\date{\ }

\setcounter{Maxaffil}{0}

\begin{document}

\maketitle

\abstract{
    A $k$-matching $M$ of a graph $G=(V,E)$ is a subset $M\subseteq E$ such that each
    connected component in the subgraph $F = (V,M)$ of $G$ is either a single-vertex
    graph or $k$-regular, i.e., each vertex has degree $k$. In this contribution, we
    are interested in $k$-matchings within the four standard graph products: the
    Cartesian, strong, direct and lexicographic product.

	  As we shall see, the problem of finding non-empty $k$-matchings ($k\geq 3$) in
	  graph products is NP-complete. Due to the general intractability of this problem,
	  we focus on distinct polynomial-time constructions of $k$-matchings in a graph
	  product $G\star H$ that are based on $k_G$-matchings $M_G$ and $k_H$-matchings
	  $M_H$ of its factors $G$ and $H$, respectively. In particular, we are interested
	  in properties of the factors that have to be satisfied such that these
	  constructions yield a maximum $k$-matching in the respective products. Such
	  constructions are also called ``well-behaved'' and we provide several
	  characterizations for this type of $k$-matchings.

		Our specific constructions of  $k$-matchings in graph products satisfy the property of being
		weak-homomorphism preserving, i.e., constructed matched edges in the product
		are never ``projected'' to unmatched edges in the factors.
		This leads to the concept of weak-homomorphism preserving $k$-matchings.
		Although the specific $k$-matchings constructed here are not always maximum $k$-matchings of the products,
	  they have always maximum size among all weak-homomorphism preserving $k$-matchings.
	  Not all weak-homomorphism preserving $k$-matchings, however, can be constructed in our manner.
		We will, therefore, determine the size of maximum-sized elements
		among all weak-homomorphims preserving $k$-matching within the respective graph products,
		provided that the matchings in the factors satisfy some general assumptions.
}

\newcommand{\sep}{; }
\bigskip
\noindent
\keywords{
maximum matching\sep
					 perfect matching\sep
					 k-factor\sep
					 graph product\sep
					 NP-complete}

\sloppy

\section{Introduction}

Graphs and in particular graph products arise in a variety of different contexts,
from computer science \cite{AMA-07, JHH+10} to theoretical biology
\cite{SS04,Wagner:03a}, computational engineering \cite{KK-08, KR-04, HIKS-09,Hel-10,
HIK-13, HIK-13b} or just as natural structures in discrete mathematics
\cite{HammackEtAl2011}. In this contribution, we are interested in the structure of
maximum $k$-matchings in graph products that are based on $k'$-matchings in the
factors. In particular, we focus on the four standard products: the Cartesian product
$\cprod$, the strong product $\sprod$, the direct product $\dprod$ and the
lexicographic product $\lprod$, see \cite{HammackEtAl2011} for an excellent overview
on graph products.

In our notation, a $k$-matching $M$ is a subset $M \subseteq E$ of the edges of a
graph $G=(V,E)$ such that every vertex in $V$ is adjacent to either $0$ or $k$ edges
in $M$ for some positive integer $k$, see \cite{Plummer:07,LovaszPlummer1986} for
surveys. In other words, $M \subseteq E$ is a $k$-matching, if the connected
components of the subgraph $F=(V,M)$ of $G$ are single-vertex graphs or $k$-regular.
Hence, finding non-empty $k$-matchings is equivalent to the problem of finding
$k$-regular subgraphs. We note that our notation of $k$-matchings is also equivalent
to the notion of so-called \emph{component factors} \cite{Plummer:07}, i.e., sets of
edges $M\subseteq E$ such that each connected component in $(V,M)$ belongs to a set
of specified graph classes. In our setting, the graph classes are the single vertex
graphs and the $k$-regular graphs.

A $k$-matching $M$ is perfect, if each vertex is incident to some edge of $M$, and
thus, $F$ is a $k$-regular spanning graph of $G$. Determining whether a perfect
$k$-matching exists and, in the affirmative case, computing a perfect $k$-matching
can be done in polynomial-time \cite{MeijerEtAl2009,Anstee:85,CC:90}. In contrast,
the problem of determining whether a given graph has a non-empty $k$-matching, is
NP-complete for all integer $k\geq 3$ \cite{CC:90,Plesnik:84,STEWART:96}. Perfect
$k$-matchings are also known as $k$-factors
\cite{MeijerEtAl2009,Sumner1974,KirkpatrickHell1983,Tutte1947,Tutte1952,Kotzig1979,Plummer:07,Saito:91,ZAKS:71};
a term that we do not use in order to avoid confusion with the factors $G$ and $H$ of
a product $G\star H$. The $k$-matching number $\matchno_k(G)$ is the number of edges
contained in a maximum-sized $k$-matching of $G$.

$1$-matchings for graph products have been extensively studied, see e.g.\
\cite{Kotzig1979,HW:02,mahamoodian:1981,YZ:06,DZH:18,WMCM:19,bai2010lexicographic,AlmeidaEtAl2013,pisanski19831,AAL:07,LZL:12,Gauci2020PerfectMA}.
Most of these results are concerned with quite restricted subclasses of the Cartesian
product; restrictions that are needed to answer often quite difficult questions as
e.g.\ finding so-called 1-factorizations, determining so-called preclusion numbers or
enumeration of perfect $k$-matchings. $3$-matchings in the Cartesian product of
cycles have been studied in \cite{BS:19} and $k$-matchings of hypercubes (the
Cartesian product of edges) in \cite{Ramras:99}. In particular, it seems to be a
non-trivial endeavour to characterize $k$-matchings of general graph products with
arbitrary factors in terms that are solely based on the structure of these factors.
As shown in Fig.\ \ref{fig:inducedMatchings}, the graph $G\cprod H$ has a perfect
$1$-matching although none of its factors has one. A further example is provided in
Fig.\ \ref{fig:strange4M}, where the product $G\cprod K_2$ has a perfect
$4$-matching. However, the $4$-matching in Fig.\ \ref{fig:strange4M} (right) does not
seem to be related to the perfect $1$-matching of $K_2$ or the perfect $1$-, $2$- or
$3$-matching of $G$ in an obvious way. Note, neither of the graphs $G$ and $K_2$ has
a non-empty $4$-matching. In contrast, the converse is relatively simple to solve: if
$G$ has a non-empty (or perfect) $k$-matching $M_G$, then $G\cprod H$ has a non-empty
(or perfect) $k$-matching for all graphs $H$ which is obtained by ``distributing''
the edges $M_G$ along the copies of $G$. Such constructions are widely used as a
vehicle to establish many of the aforecited results.

\begin{figure}[t]
	\begin{center}
		\includegraphics[width=0.85\textwidth]{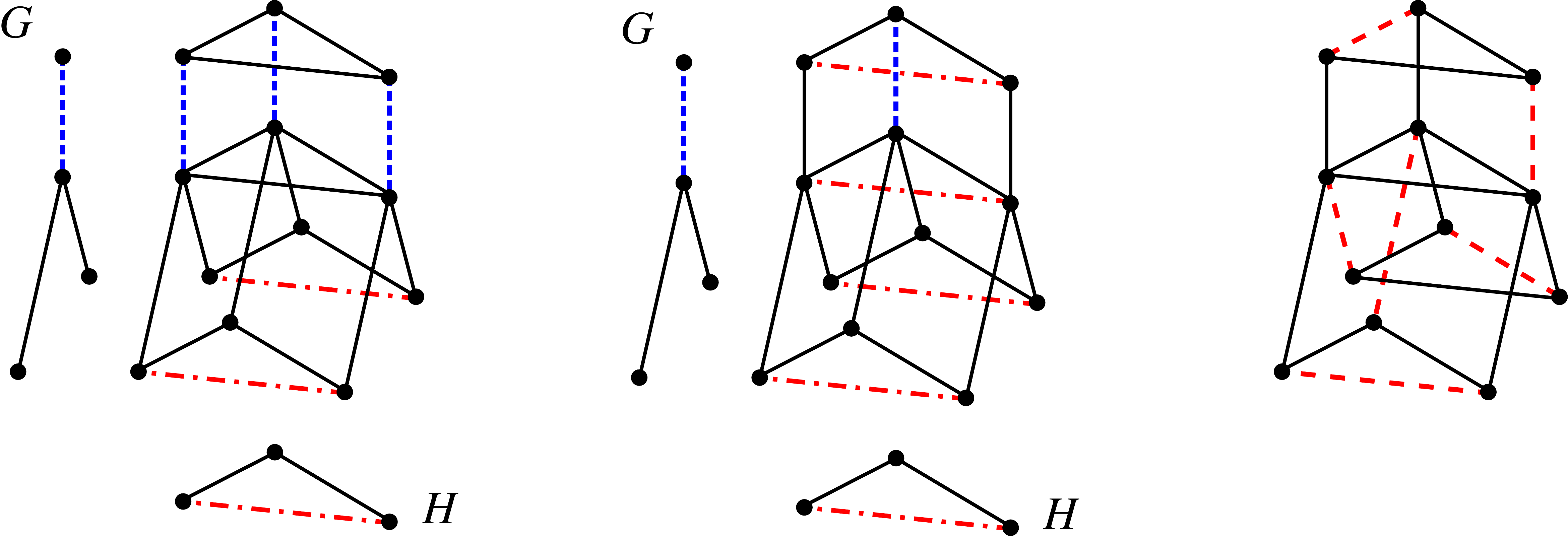}
	\end{center}
	\caption{Shown is the Cartesian product $G\Box H$. None of the factors has a
	         perfect $1$-matching although $G\Box H$ has one (highlighted by the dashed
	         edges in the right panel). The $1$-matchings $\fbast(M_{G}, M_{H})$ and
	         $\fbast(M_{H}, M_{G})$ provided by the $1$-matchings $M_{G}$ of $G$ and
	         $M_{H}$ of $H$ are shown in the left and middle panel, respectively.
	         Projections of the dashed edges $G\Box H$ onto the factors are either
	         single vertices or dashed edges in factors, i.e., weak homomorphisms
	         between the matched edges.}
		\label{fig:inducedMatchings}
\end{figure}

\begin{figure}[t]
	\begin{center}
		\includegraphics[width=0.85\textwidth]{./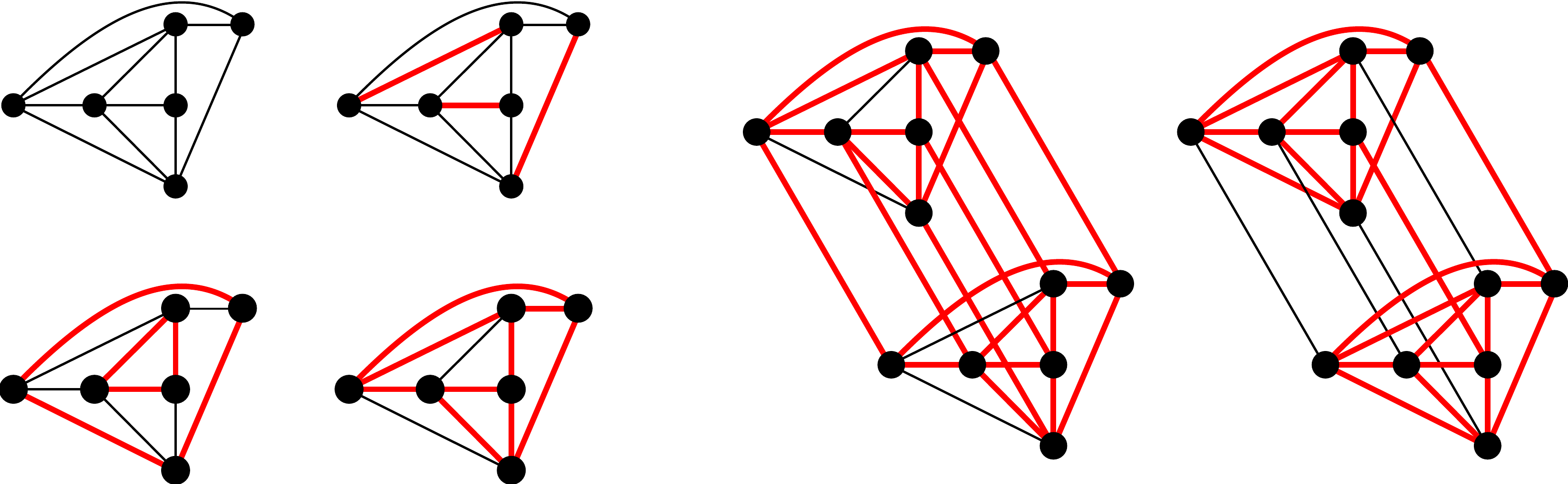}
	\end{center}
	\caption{\emph{Left Panel:} A graph $G$ together with a perfect $1$-matching, a perfect 2-matching and a
						perfect $3$-matching; highlighted by red-bold edges. Note, $G$ does not have a non-empty $4$-matching.
						\emph{Right panel:} The Cartesian product $G\cprod K_2$ together
					  with a two different perfect $4$-matchings; highlighted by red-bold edges.
					  The left $4$-matching consists of the copies of the $1$-matching of $K_2$ and the shown $3$-matching of $G$. }
		\label{fig:strange4M}
\end{figure}

While it seems to be straightforward to compose a $k$-matching in a Cartesian product
from the $k$-matchings of the factors, there are several ways to distribute a
$k$-matching of the factors along other types of products that additionally contain
``non-Cartesian'' edges. As we shall see in Section \ref{sec:construction},
finding $k$-matchings in graph products remains an NP-hard problem. Hence,
we will consider several constructions of $k$-matchings
$\fM(M_G,M_H)$ in a product $G\star H$ with $\star\in \{\cprod, \dprod, \sprod,
\lprod\}$ that are based on $k_G$-matchings $M_G$ and $k_H$-matchings $M_H$ of the
factors $G$ and $H$, respectively. In Section \ref{sec:construction}, we will
provide three specific types for such constructions
$\fM(M_G,M_H)$ and are, in particular, interested in properties of the factors that
have to be satisfied such that these constructions yield a maximum $k$-matching in
the respective products, i.e., $\matchno_K(G\star H) = |\fM(M_G,M_H)|$. Such
matchings are called ``well-behaved'' and we investigate their structure
in detail in Section \ref{sec:wellB}.

Our constructions $\fM(M_G,M_H)$ of $k$-matchings in the products $G\star H$ follow
unambiguous predefined rules based on the $k_G$-matching $M_G$ of $G$ and the
$k_H$-matching $M_H$ of $H$. To make this problem tractable and to avoid
``uncontrolled behaviour'' of the constructed $k$-matchings in the products, we have
chosen these rules in such way that they preserve weak-homomorphism between the
matched edges, i.e., matched edges in a product are never mapped onto unmatched edges
in the factors using the usual projection map \cite{HammackEtAl2011}. This leads to
the concept of weak-homomorphism preserving $k$-matchings, which is discussed in
Section \ref{sec:wh}. The set $\mathcal{W}_k(G\star H, M_G,M_H)$ denotes the
collection of \emph{all} $k$-matchings in $G\star H$ that are weak-homomorphism
preserving w.r.t.\ the matched edges in $M_G$ and $M_H$. As it turns out, the
three specific $k$-matchings constructed here have always maximum size among all
weak-homomorphism preserving $k$-matchings. Provided that the matchings $M_G$ and
$M_H$ satisfy some mild assumptions, we will furthermore determine the size of
maximum-sized elements within the set $\mathcal{W}_k(G\star H, M_G,M_H)$ for all
positive integers $k$. We finally collect some results for $1$-matchings based
on the established results in Section \ref{sec:1m} and
provide a short summary and discuss open problems in Section \ref{sec:sum}.

\section{Preliminaries}
\label{sec:prelim}

\paragraph{Basics.}
In this contribution, we consider finite, simple and undirected graphs $G=(V,E)$ with
non-empty vertex set $V_G\coloneqq V$ and edge set $E_G\coloneqq E\subseteq
\binom{V}{2}$. Note that, by definition, $G$ is loop-free, i.e., it does not
contain edges of the form $\{x,x\}$.

The \emph{order of $G$} is $|V_G|$ and its \emph{size} is $|E_G|$. The
\emph{degree $\deg_G(v)$} of a vertex $v\in V_G$ is the number of edges that are
incident to $v$. A graph is \emph{$k$-regular}, if every vertex of $G$ has degree $k$.
A graph $H$ is a \emph{subgraph} of a graph $G$, if $V_H\subseteq
V_G$ and $E_H\subseteq E_G$. A subgraph $H \subseteq G$ is \emph{induced} if for
every edge $\{x,y\}\in E_G$ with $x,y\in V_H$ we have $\{x,y\}\in E_H$.

Subsets $F,F'\subseteq E_G$ are called \emph{vertex disjoint} if 
for all $f\in F$ and $f'\in F'$ it holds that $f\cap f'=\emptyset$.
 
A path of length $n$ is denoted by $P_n$. The graph with vertex set $V$ and edge
set $\binom{V}{2}$ is a \emph{complete} graph and denoted by $K_{|V|}$. A \emph{star
$S_n$} is a tree containing $n+1$ vertices and one vertex of degree $n$.

Two graphs $G$ and $H$ are \emph{isomorphic}, in symbols $G\simeq H$, if there is a
bijective map $\varphi\colon V_G \to V_H$ such that $\{u,v\} \in  E_G$ 
if and only if $\{ \varphi(u), \varphi(v) \} \in E_H$.
For two graphs $G$ and $H$,
a \emph{weak homomorphism} is a map $\varphi\colon V_G \to V_H$
for which $\{u,v\} \in  E_G$ implies that
$\{ \varphi(u), \varphi(v) \} \in E_H$ or $\varphi(u) = \varphi(v)$.
In simple words, weak homomorphisms map edges to edges or to vertices.

\paragraph{Matchings and regular subgraphs.}

If $F=(V,M)$ is a subgraph of $G=(V,E)$ such that every vertex $v\in V$
has degree $0$ or $k$ in $F$, where $k$ is a positive integer, then $M$ is a \emph{$k$-matching of $G$}.
Hence, for a $k$-matching $M$ of $G$, the connected components in $F=(V,M)$ are $K_1$s or
$k$-regular graphs and if all isolated  vertices have been removed from $F$ we obtain a
$k$-regular or empty subgraph of $G$.

For $M\subseteq E$, we say that a
vertex $v\in V$ is \emph{$M$-unmatched} or \emph{unmatched by $M$}, if $v$ is not
incident to an edge in $M$. If there is
a $k$-matching $M$ of $G$ such that no vertex is $M$-unmatched, then $M$ is called a
\emph{perfect} $k$-matching of $G$ and, if only one vertex is $M$-unmatched,
a \emph{near-perfect} $k$-matching of $G$.

Note that $M$ may very well be empty and thus, for every positive integer $k$,
a $k$-matching of $G$ exists, namely $M=\emptyset$.
Furthermore, a
$k$-matching $M$ of $G$ is \emph{maximal} if there is no $k$-matching $M'$ of $G$ that
satisfies $M\subsetneq M'$. A $k$-matching $M$ of $G$ is \emph{maximum} if it has the
largest size among all $k$-matchings of $G$. The size of a maximum $k$-matching of
$G$ is denoted by $\matchno_k(G)$ and the number of vertices that are unmatched by a
maximum $k$-matching by $\unmatchno_k(G)$.

We emphasize that there are many ways to define particular types of matchings, cf.\ \cite{Plummer:07,LovaszPlummer1986}.
In this context, our notion of $k$-matchings is equivalent to so-called
\emph{component factors}  while
perfect $k$-matchings coincide with the well-known notion of \emph{$k$-factors} which are $k$-regular spanning graphs.
While it is an easy task to verify the existence and to find a perfect $k$-matching (if one exists) \cite{MeijerEtAl2009,Anstee:85,CC:90},
it is NP-complete to determine whether there is a non-empty $k$-matchings in a graph for all $k\geq 3$ \cite{CC:90,Plesnik:84,STEWART:96}.

We provide here a simple result for later reference.

\begin{lemma}
\label{lem:muFormula}
For all positive integers $k$ and every $k$-matching $M$ of a graph $G$ of order $n$
it holds that $|M| =k\cdot \frac{n-u_G}{2}$, where $u_G$ denotes the number of
$M$-unmatched vertices in $G$. In particular, $\matchno_k(G)=k\cdot
\frac{n-\unmatchno_k(G)}{2}$. Moreover, $G$ has a perfect $k$-matching if and only if
$\matchno_k(G)=k\cdot \frac{n}{2}$.
\end{lemma}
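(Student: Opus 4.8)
The plan is to run a standard handshake (degree-sum) argument on the subgraph $F=(V,M)$. First I would recall that $\sum_{v\in V_G}\deg_F(v) = 2|M|$, since each edge of $M$ contributes exactly $2$ to the total degree. By the definition of a $k$-matching, every vertex of $F$ has degree either $0$ or $k$; the vertices of degree $0$ are precisely the $M$-unmatched ones, of which there are $u_G$ by definition, so exactly $n-u_G$ vertices have degree $k$. Substituting gives $2|M| = k\,(n-u_G)$, i.e.\ $|M| = k\cdot\frac{n-u_G}{2}$, which is the first claim. (As a sanity check this also shows $k\,(n-u_G)$ is always even, so $n-u_G$ is even whenever $k$ is odd.)

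For the ``in particular'' part, I would apply the formula to a maximum $k$-matching $M$. Writing $u_G$ for its number of unmatched vertices, we get $\matchno_k(G) = |M| = k\cdot\frac{n-u_G}{2}$. It remains to identify $u_G$ with $\unmatchno_k(G)$; this is immediate once one observes that the formula exhibits $|M|$ as a strictly decreasing function of the number of unmatched vertices, so a $k$-matching has maximum size if and only if it has the minimum possible number of unmatched vertices. Hence every maximum $k$-matching leaves exactly $\unmatchno_k(G)$ vertices unmatched, giving $\matchno_k(G) = k\cdot\frac{n-\unmatchno_k(G)}{2}$.

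Finally, for the characterization of perfect $k$-matchings: by definition $G$ has a perfect $k$-matching precisely when there is a $k$-matching with no unmatched vertex, equivalently when $\unmatchno_k(G)=0$ (using again that a perfect $k$-matching, leaving $0$ vertices unmatched, is automatically of maximum size). Plugging $\unmatchno_k(G)=0$ into the displayed identity yields $\matchno_k(G)=k\cdot\frac n2$, and conversely $\matchno_k(G)=k\cdot\frac n2$ forces $\unmatchno_k(G)=0$ by the same identity. I do not anticipate any genuine obstacle here: the entire argument is elementary double counting together with the monotonicity observation; the only point worth stating carefully is why ``largest size'' and ``fewest unmatched vertices'' coincide, which the formula makes transparent.
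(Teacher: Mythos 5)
Your proof is correct and follows essentially the same route as the paper: a handshake/degree-sum count on $F=(V,M)$, specialization to a maximum $k$-matching, and the observation that perfection is equivalent to $\unmatchno_k(G)=0$. The only difference is that you explicitly justify why maximum size coincides with the minimum number of unmatched vertices, a point the paper leaves implicit in its definition of $\unmatchno_k(G)$.
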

\begin{proof}
Let $M$ be a $k$-matching of $G=(V,E)$ and let $H=(V,M)$. Note, there are $u_G$
vertices $v\in V$ of degree $\deg_H(v)=0$ and $n-u_G$ vertices of degree
$\deg_H(v)=k$. Hence, $\sum_{v\in V}\deg_H(v) = (n-u_G)k = 2|M|$ which is, if and
only if, $|M| =k\cdot \frac{n-u_G}{2}$. If $M$ is a maximum $k$-matching, then
$|M|=\matchno_k(G)$ and $\unmatchno_k(G) = u_G$ and so, $\matchno_k(G)=k\cdot
\frac{n-\unmatchno_k(G)}{2}$. Finally, $G$ has a perfect $k$-matching if and only if
$\unmatchno_k(G) = 0$, which is, by the latter arguments, if and only if
$\matchno_k(G)=k\cdot \frac{n}{2}$.
\end{proof}

\paragraph{Graph Products.}
Given graphs $G$ and $H$, there are four standard graph products: the Cartesian
product $G\cprod H$, the direct product $G\dprod H$, the strong product $G\sprod H$
and the lexicographic product $G\lprod H$ \cite{HammackEtAl2011}. The vertex set
$V_{G\star H}$ of each of the products $\star\in \{\cprod, \sprod,\lprod, \dprod \}$ is
defined as the Cartesian set product $V_G\times V_H$ of the vertex sets of the
factors.
Two vertices $(g,h), (g',h') \in V_G\times V_H$ are adjacent in $G\sprod H$
precisely if one of the following conditions is satisfied:
\begin{itemize}[noitemsep]
	\item[(i)]  $\{g,g'\}\in E_G$ and $h=h'$.
	\item[(ii)] $g=g'$ and $\{h,h'\}\in E_H$.
	\item[(iii)] $\{g,g'\}\in E_G$ and $\{h,h'\}\in E_H$.
\end{itemize}
In the lexicographic product $G\lprod H$ there is an edge $\{(g,h),(g',h')\}$ if and
only if $\{g,g' \} \in E_G$ or (ii) is satisfied. In the Cartesian product $G \cprod
H$ vertices are only adjacent if they satisfy (i) or (ii). Consequently, $G\cprod H
\subseteq G\sprod H \subseteq G\lprod H$ and we call the edges of a strong and
lexicographic product that satisfy (i) or (ii) \emph{Cartesian} edges and the others
\emph{non-Cartesian} edges. In the direct product $G \dprod H$ vertices are adjacent
precisely if they satisfy (iii). Hence, $G\dprod H \subseteq G\sprod H \subseteq G\lprod H$.

For $\star\in \{\cprod, \sprod,\lprod \}$ and a given a vertex $v = (g, h) \in
V_{G\star H}$, the graph $G^h$ and $\lefts{g}{H}$ denotes the subgraph of $G \star
H$ that is induced by the vertex set $\{(x,h)\colon x\in V_G\}$ and $\{(g,y)\colon y\in
V_H\}$, respectively. The subgraph $G^h$ is called a \emph{$G$-layer}
and $\lefts{g}{H}$ an \emph{$H$-layer}.
Note, that $G$- and $H$-layers are subgraphs of $G\cprod H$ that are isomorphic to $G$ and $H$,
respectively. Hence, we will refer to the subgraphs $G^h$ and $\lefts{g}{H}$
also as \emph{copies of} $G$ and $H$, respectively. Note that copies $G^h$
and $\lefts{g}{H}$ intersect in precisely one vertex, namely in $(g,h)$.
Moreover,
$V_{G^h}\cap V_{G^{h'}}=\emptyset$ if and only if $h\neq h'$, as well as
$V_{\lefts{g}{H}}\cap V_{\leftsprime{g'}{H}}=\emptyset$ if and only if $g\neq g'$.

Given a product $G\star H$, $\star\in \{\cprod, \dprod,\sprod,\lprod\}$,
the  \emph{projection  maps} $p_G\colon V_{G\star H} \to V_G$ and
$p_H\colon V_{G\star H} \to V_H$ are defined as
$p_G((g,h)) = g$ and $p_H((g,h)) = h$.
For $\star\in \{\cprod, \dprod,\sprod\}$, both  $p_G$ and $p_H$
are weak homomorphisms. For $G\lprod H$ only $p_G$ is a weak homomorphism.


\section{Construction of  $\boldsymbol{k}$-matching in products}
\label{sec:construction}

In this section, we are concerned with the constructions of $k$-matching in graph
products $G\star H$ where $\star\in \{\cprod, \sprod, \dprod, \lprod\}$, provided that
some information about a $k_G$-matching of $G$ and a $k_H$-matching of $H$ is
available. Determining whether $G$ has a non-empty $k$-matching is an NP-complete
problem for all $k\geq 3$ \cite{CC:90,Plesnik:84}. It is not difficult to prove that this
problem remains NP-complete if the input are graph products, as shown below.
Due to the intractability of the latter problem,  we
will consider specific constructions for $k$-matchings in graph products. We will
first characterize when such $k$-matchings in $G\star H$ exists. In Section
\ref{sec:wellB} we will discuss under which conditions such constructions yield
maximum $k$-matchings in $G\star H$. Our constructions are not chosen arbitrarily
and, in particular, are designed in a way that constructed matched edges in the
products are never projected onto unmatched edges in the factors, a property that is
reasonable to avoid to overly complicated or random constructions. We will discuss
this property in some further detail in Section \ref{sec:wh}.

Let us first briefly discuss
the computational complexity of finding $k$-matchings in graph products.
To this end, consider the following well-known problem which was initially
stated as ``$k$-regular subgraph problem'' \cite{CC:90,Plesnik:84}.

\begin{problem}[\PROBLEM{$k$-Matching}]\ \\
  \begin{tabular}{ll}
    \emph{Input:}    & An integer $k$ and a graph $G$.\\
    \emph{Question:} & Is there a non-empty $k$-matching of $G$?
  \end{tabular}
\end{problem}

Plesnik  \cite{Plesnik:84} provided
\begin{theorem}
For every $k\geq 3$,  \PROBLEM{$k$-Matching} is NP-complete, even for the class
of bipartite graphs  with maximum degree $k+1$.
\end{theorem}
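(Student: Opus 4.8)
The plan is to dispatch membership in NP first and then to give a polynomial-time reduction from $3$-\PROBLEM{Sat} that outputs only bipartite graphs of maximum degree $k+1$. Membership is immediate: for an instance $(k,G)$ with $G=(V,E)$, a non-empty $k$-matching $M\subseteq E$ is itself a certificate of size $O(|E|)$, and one checks in linear time that $M\neq\emptyset$ and that every vertex of $(V,M)$ has degree $0$ or $k$. For hardness I would fix any integer $k\geq 3$ (settling each fixed $k$ is enough, and a fortiori covers the version where $k$ is part of the input) and reduce from $3$-\PROBLEM{Sat}; since a variable occurring in many clauses can first be replaced by a cycle of equivalent copies via the standard equality gadget, I may assume every variable occurs in a bounded number of clauses. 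Given such a formula $\varphi$ with variables $x_1,\dots,x_n$ and clauses $C_1,\dots,C_m$, the graph $G_\varphi$ is assembled from three kinds of \emph{bipartite building blocks} glued along designated \emph{port} vertices, where a port has degree $k-1$ inside its own block and is intended to receive its last incident matched edge only through a gluing edge.

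Concretely, I would use: (a) a \emph{variable block} for $x_i$ carrying one positive and one negative port for each occurrence of $x_i$ and $\overline{x_i}$, designed so that in every $k$-matching of $G_\varphi$ the edges incident to this block are forced into exactly one of two configurations --- ``all positive ports active and all negative ports inactive'' or the reverse --- which encodes the truth value of $x_i$; the two states are coordinated by a few internal ``hub'' vertices of degree $k+1$, each of which uses exactly $k$ of its incident edges in either state. (b) A \emph{clause block} for $C_j$ with three input ports that admits a completion to a $k$-regular piece when at least one input port is active but not when all three are inactive. (c) The \emph{wiring}: for each occurrence of a literal $\ell$ in $C_j$, one edge joining the literal-port of the variable block to the corresponding input port of the clause block, taken into $M$ exactly when $\ell$ is true.

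Three points then need verification. First, $G_\varphi$ is bipartite: each block is bipartite, and one either picks the colour classes of the ports so that the wiring respects a global $2$-colouring or subdivides each offending wiring edge by two vertices carrying a small $k$-regular filler. Second, every vertex has degree at most $k+1$: internal non-hub vertices have degree $k$, ports have degree $k-1$ and receive at most one wiring edge, and hubs have degree exactly $k+1$. Third, correctness: a satisfying assignment yields a non-empty $k$-matching by putting every variable block into the corresponding state and completing each clause block through a satisfied literal; conversely, the rigid degree constraints of the blocks force any non-empty $k$-matching of $G_\varphi$ to activate every variable block in one of its two states and to make at least one literal of each touched clause true, so the induced assignment satisfies $\varphi$.

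The step I expect to be the main obstacle is the backward direction together with the degree count. The blocks must be designed so rigidly --- each block $2$-edge-connected enough that selecting a single one of its edges into $M$ forces a prescribed set of its edges, hence its ports, hence the neighbouring blocks --- that \emph{no} ``spurious'' non-empty $k$-matching can exist: none supported on only part of $G_\varphi$, and none mixing the two states of a variable block. In effect every non-empty $k$-matching must be a perfect $k$-matching of a spanning subgraph of $G_\varphi$ encoding a satisfying assignment. Constructing blocks that are simultaneously bipartite, of maximum degree $k+1$, and this rigid --- by one scheme valid for all $k\geq 3$ --- is where the real work lies; once the blocks are fixed, bipartiteness and the degree bound reduce to a finite check per block.
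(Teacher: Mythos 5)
The paper does not prove this theorem at all: it is quoted verbatim from Plesn\'{\i}k \cite{Plesnik:84}, so there is no in-paper argument to compare against. Your outline does follow the general shape of the known reductions (gadget-based reductions from a bounded-occurrence satisfiability problem), so the strategy is not wrong in spirit; the issue is that, as written, it is a plan rather than a proof.

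The genuine gap is the one you yourself flag at the end: the variable and clause blocks are specified only by the properties they are supposed to have, never actually constructed, and those properties are the entire mathematical content of the theorem. Two points make this more than a routine omission. First, the problem asks for a \emph{non-empty} $k$-matching, not a perfect one, so the backward direction requires that $G_\varphi$ contain \emph{no} $k$-regular subgraph whatsoever unless $\varphi$ is satisfiable --- in particular, no single block (nor any union of blocks and wiring edges) may contain a $k$-regular subgraph on its own. A block rigid enough to force the two-state behaviour you describe tends to be dense around its degree-$(k+1)$ hubs, and ruling out a small $k$-regular subgraph hiding inside it is exactly the delicate part; asserting that the blocks are ``$2$-edge-connected enough'' does not establish this. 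Second, the blocks must simultaneously be bipartite, respect the global degree bound $k+1$, and admit a uniform description for every $k\geq 3$; your fallback of ``subdividing offending wiring edges by two vertices carrying a small $k$-regular filler'' is itself problematic, since any such filler is a $k$-regular subgraph and would by itself witness a non-empty $k$-matching independently of $\varphi$. Until explicit gadgets are exhibited and these two obstructions are verified for them, the reduction is not established; the correct resolution within the scope of this paper is simply to cite \cite{Plesnik:84} (or \cite{CC:90,STEWART:96}) as the authors do.
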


Based on this result, we can easily show that the problem of determining whether a graph
product has a non-empty $k$-matching is NP-complete as well.

\begin{corollary}
For every $k\geq 3$, \PROBLEM{$k$-Matching} remains NP-complete when the input is restricted to
graph products $G\star H$,
$\star\in \{\cprod,\sprod,\lprod, \dprod\}$.
\end{corollary}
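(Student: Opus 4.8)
The plan is to give polynomial-time reductions from \PROBLEM{$k$-Matching} on general graphs, whose NP-hardness is precisely Plesník's theorem quoted above; by that theorem we may moreover assume the input graph $G$ to be bipartite. Membership in NP is clear and uniform over the four products: a non-empty $k$-matching $M$ of $G\star H$ has size at most $\tfrac{k}{2}|V_G|\,|V_H|$, and the condition that every vertex of $G\star H$ has degree $0$ or $k$ in $(V_{G\star H},M)$ can be checked in polynomial time, so $M$ itself is a certificate. It therefore remains to prove NP-hardness for each $\star\in\{\cprod,\sprod,\lprod,\dprod\}$ separately.

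For $\star\in\{\cprod,\sprod,\lprod\}$ I would simply pair $G$ with the trivial second factor $K_1$. Reading off the adjacency definitions, in $G\cprod K_1$, $G\sprod K_1$ and $G\lprod K_1$ conditions (ii) and (iii) can never hold, since both require an edge of $K_1$, of which there is none; hence the only edges are the copies of the edges of $G$, and $G\star K_1\simeq G$. In particular every graph is (isomorphic to) a product of each of these three types, so the restricted problem coincides with the unrestricted one, and $(k,G)\mapsto(k,G\star K_1)$ is the desired (in fact trivial) polynomial-time reduction.

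The genuine obstacle is the direct product, where this idea collapses: $G\dprod K_1$ has no edges at all, because condition (iii) is vacuous, so it never admits a non-empty $k$-matching. Here I would instead map $(k,G)\mapsto(k,G\dprod K_2)$, which is constructible from $G$ in polynomial time. Since $G$ may be assumed bipartite, $G\dprod K_2$ is the direct product of a bipartite graph with $K_2$ and hence splits into two disjoint copies of $G$, i.e.\ $G\dprod K_2\simeq G\sqcup G$; this is the one point requiring a moment's care, and it follows by checking that for bipartite $G$ the restriction of the projection $p_G$ to a connected component of $G\dprod K_2$ is an isomorphism onto (a component of) $G$. As a set of edges is a $k$-matching of $G\sqcup G$ exactly when its intersection with each of the two copies is a $k$-matching there, $G\dprod K_2$ has a non-empty $k$-matching if and only if $G$ does, which completes the reduction for the direct product and hence the proof.
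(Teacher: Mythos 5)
Your proposal is correct and follows essentially the same route as the paper: reduce from \PROBLEM{$k$-Matching} on bipartite graphs via $G\star K_1\simeq G$ for $\star\in\{\cprod,\sprod,\lprod\}$, and via $G\dprod K_2$ splitting into two disjoint copies of $G$ (which the paper cites from the literature rather than sketching) for the direct product. The only difference is that you additionally spell out NP-membership and verify the bipartite doubling fact by hand, neither of which changes the argument.
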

\begin{proof}
It suffices to show NP-hardness. We shall show that \PROBLEM{$k$-Matching} can be reduced to this problem.
To this end, let $k\geq 3$ be some integer and $G$ be an arbitrary bipartite graph  (with maximum degree $k+1$).

First consider $\star\in \{\cprod,\sprod,\lprod\}$. Construct an instance $G\star K_1$.
Since $G\star K_1\simeq G$, it trivially holds that $G$ has a non-empty $k$-matching if and only if
$G\star K_1$ has one.

Now consider the direct product $\star = \dprod$ and construct an instance $G\dprod K_2$.
By \cite[Cor.\ 1.2]{sampathkumar_1975}, the product $G\dprod K_2$ consists of two vertex disjoint copies
of $G$, since $G$ is bipartite. Hence, $G$ has a non-empty $k$-matching if and only if
$G\dprod K_2$ has a non-empty $k$-matching.
\end{proof}

As the problem of finding non-empty $k$-matchings in graph products is intractable in general,
we consider now the problem of constructing $k$-matchings in products provided that some information
about matchings in the respective factors is available.


\subsection{$\fbast$-constructions}

There are several ways to compose a $k$-matching in a
product for given $k$-matchings of the factors. We first consider the construction of
$k$-matchings in $G\star H$ by taking the copies of the matched edges of one factor
and then distribute the matched edges of the other factor along the unmatched
vertices. To be more precise, consider
\begin{definition}\label{def:bast}
	Let $G$ and $H$ be graphs and let $M_G\subseteq E_G$ and $M_H\subseteq E_H$.
	Furthermore, let $U_G\subseteq V_G$ and $U_H\subseteq V_H$ be the set of vertices
	that are unmatched by $M_G$ and $M_H$ in $G$ and $H$, respectively. For $G$ and
	$H$, we define the following sets
	\begin{align*}
		\MB_G \coloneqq & \left \{\{(g,h),(g',h)\} \mid \{g,g'\}\in M_G, h\in V_H \right \}  \\
		\MU_G \coloneqq & \left \{\{(u,h),(u,h')\} \mid u\in U_G, \{h,h'\}\in M_H  \right \} \\[0.1in]
		\MB_H \coloneqq & \left \{\{(g,h),(g,h')\} \mid \{h,h'\}\in M_H, g\in V_G \right \}  \\
		\MU_H \coloneqq & \left \{\{(g,u),(g',u)\} \mid u\in U_H, \{g,g'\}\in M_G  \right \}
   \end{align*}
	\[\fbast(M_G,M_H)   \coloneqq  \MB_G \cup \MU_G \text{ and }
				  \fbast(M_H,M_G)   \coloneqq  \MB_H \cup \MU_H.\]
\end{definition}
Note that, in general, $\fbast(M_G,M_H)\neq \fbast(M_H,M_G)$; see Fig.\ \ref{fig:inducedMatchings}
and \ref{fig:fbast-exmpl} for illustrative examples.
By construction, it always holds that $\fbast(M_G,M_H), \fbast(M_H,M_G)\subseteq
E_{G\star H}$, $\star\in \{\cprod,\sprod,\lprod\}$. In particular,
$\fbast(M_G,M_H)$ and $\fbast(M_H,M_G)$ consist of Cartesian edges only.

Note, we can always find subsets $M_G\subseteq E_G$ and $M_H\subseteq E_H$ such that
$\fbast(M_G,M_H)$ and $\fbast(M_H,M_G)$ are $k$-matchings, simply by putting
$M_G=M_H=\emptyset$. Moreover, the edges in non-empty $\fbast(M_G,M_H)$ do only
exist in the Cartesian, strong and lexicographic product but not in the direct product. Hence,
the products of interest in this part are the Cartesian, strong and lexicographic product.

\begin{figure}[t]
	\begin{center}
		\includegraphics[width=0.95\textwidth]{./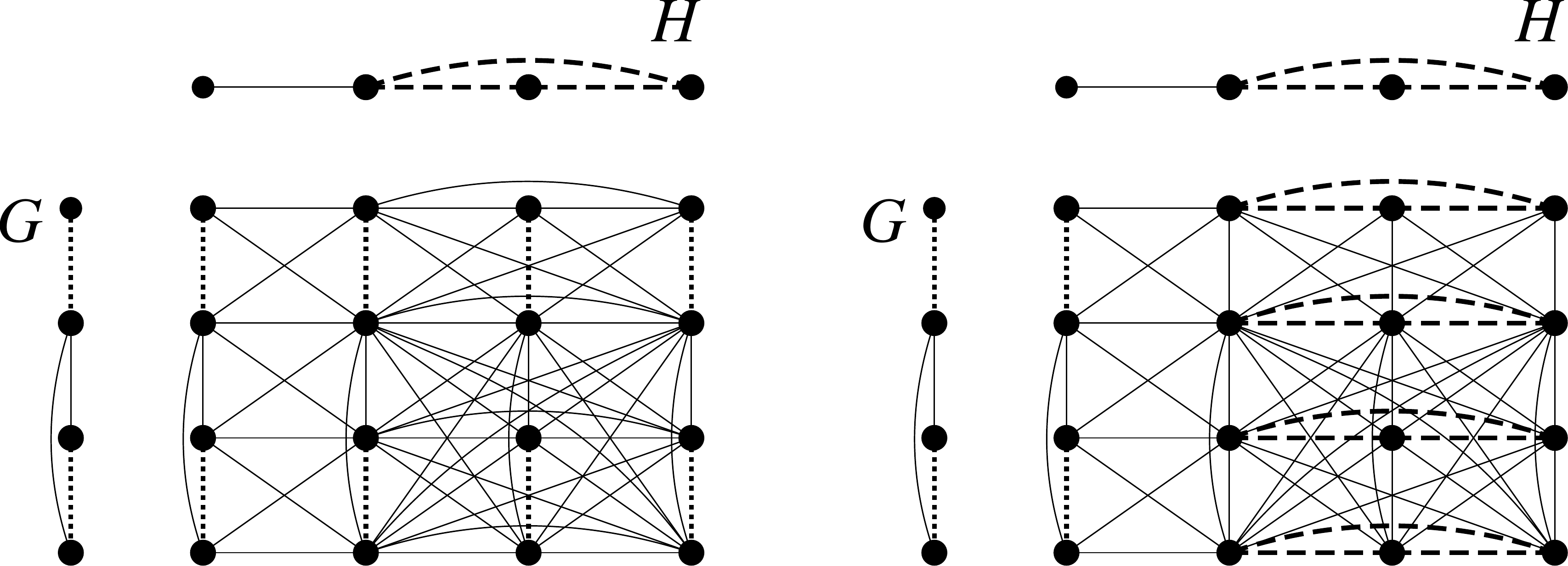}
	\end{center}
	\caption{Shown is the strong product $G\sprod H =(V,E)$ and $\fbast(M_G,M_H)$
	         (left) and $\fbast(M_H,M_G)$ (right) for the perfect $1$-matching $M_G$ of
	         $G$ and the maximal $2$-matching $M_H$ of $H$. \emph{Left:} The set
	         $\MB_G$ consists of the dotted-lined edges while $\MU_G=\emptyset$.
	         \emph{Right:} The set $\MB_H$ consists of the dashed-lined edges and
	         $\MU_H$ of the dotted-lined edges. Note, $\fbast(M_G,M_H)$ is a
	         $1$-matching of $G\sprod H$, while $\fbast(M_H,M_G)$ is not since the
	         subgraph $F=(V,\fbast(M_H,M_G))$ is not $k$-regular. }
	     		\label{fig:fbast-exmpl}
\end{figure}

We provide now a simple but technical result for later reference.
\begin{lemma}\label{lem:basics-MB-MG}
Let $\star\in \{\cprod,\sprod,\lprod\}$ and  $G\star H=(V,E)$ be a product with factors
$G$ and $H$. Then, $\MB_G$ and $\MU_G$ as well as $\MB_H$ and $\MU_H$
are vertex disjoint for all $M_G\subseteq E_G$ and $M_H\subseteq E_H$. 

Moreover, $(V_G\times \{h\},\MB_G\cap E_{G^h})\simeq (V_G,M_G)$ for all $h\in V_H$ and 
$(\{g\}\times V_H,\MU_G\cap E_{\lefts{g}{H}})\simeq (V_H,M_H)$ for all $g\in U_G$.
\end{lemma}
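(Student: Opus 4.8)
The plan is to unfold Definition~\ref{def:bast} directly; no clever idea is needed.

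\textbf{Step 1 (vertex disjointness).} To see that $\MB_G$ and $\MU_G$ are vertex disjoint, I would fix arbitrary $f=\{(g,h),(g',h)\}\in\MB_G$, so that $\{g,g'\}\in M_G$, and $f'=\{(u,h_1),(u,h_2)\}\in\MU_G$, so that $u\in U_G$ and $\{h_1,h_2\}\in M_H$. Every vertex occurring in $f$ has first coordinate $g$ or $g'$, each of which is incident to the edge $\{g,g'\}\in M_G$ and hence is $M_G$-matched; every vertex occurring in $f'$ has first coordinate $u\in U_G$, which by definition of $U_G$ is $M_G$-unmatched. Since a vertex of $G$ cannot be simultaneously matched and unmatched by $M_G$, no vertex of $f$ coincides with a vertex of $f'$, i.e.\ $f\cap f'=\emptyset$; as $f,f'$ were arbitrary, $\MB_G$ and $\MU_G$ are vertex disjoint. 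The claim for $\MB_H$ and $\MU_H$ then follows by exchanging the roles of $G$ and $H$ (and of $M_G$ and $M_H$) throughout.

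\textbf{Step 2 (layer isomorphisms).} Here I would use the canonical bijections between a factor and its layers. Since $V_{G^h}=\{(x,h)\colon x\in V_G\}$, the map $\varphi_h\colon V_G\to V_{G^h}$, $x\mapsto(x,h)$, is a bijection. I would then verify the identity $\MB_G\cap E_{G^h}=\{\{(x,h),(y,h)\}\mid\{x,y\}\in M_G\}$: by definition, an edge of $\MB_G$ has both of its endpoints in $V_{G^h}$ precisely when both endpoints have second coordinate $h$, and these are exactly the edges $\{(x,h),(y,h)\}$ with $\{x,y\}\in M_G$. Consequently $\{x,y\}\in M_G$ if and only if $\{\varphi_h(x),\varphi_h(y)\}\in\MB_G\cap E_{G^h}$, so $\varphi_h$ is an isomorphism $(V_G,M_G)\simeq(V_G\times\{h\},\MB_G\cap E_{G^h})$. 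Symmetrically, for $g\in U_G$ the map $\psi_g\colon V_H\to V_{\lefts{g}{H}}$, $y\mapsto(g,y)$, is a bijection onto $V_{\lefts{g}{H}}=\{(g,y)\colon y\in V_H\}$, and the hypothesis $g\in U_G$ together with the definition of $\MU_G$ gives $\MU_G\cap E_{\lefts{g}{H}}=\{\{(g,h),(g,h')\}\mid\{h,h'\}\in M_H\}$; hence $\psi_g$ is an isomorphism $(V_H,M_H)\simeq(\{g\}\times V_H,\MU_G\cap E_{\lefts{g}{H}})$.

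The only place that asks for a moment of care --- and what I would regard as the ``hard'' part, although it is genuinely minor --- is the two displayed equalities in Step~2: one must be sure that intersecting with the layer's edge set restores \emph{exactly one} copy of the factor matching and never admits stray edges coming from other layers. This uses $V_{G^h}\cap V_{G^{h'}}=\emptyset$ for $h\neq h'$ (respectively $V_{\lefts{g}{H}}\cap V_{\leftsprime{g'}{H}}=\emptyset$ for $g\neq g'$), and, for the $\MU_G$ identity, the assumption $g\in U_G$, which guarantees that the layer $\lefts{g}{H}$ is among those over which $\MU_G$ places a copy of $M_H$. Everything else is a direct translation of Definition~\ref{def:bast}.
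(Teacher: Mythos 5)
Your proposal is correct and follows essentially the same route as the paper: the disjointness is obtained by noting that first coordinates of $\MB_G$-endpoints are $M_G$-matched while those of $\MU_G$-endpoints lie in $U_G$, and the layer isomorphisms come from unfolding Definition~\ref{def:bast} via the canonical bijections $x\mapsto(x,h)$ and $y\mapsto(g,y)$. Your version is slightly more explicit about the identification $\MB_G\cap E_{G^h}=\{\{(x,h),(y,h)\}\mid\{x,y\}\in M_G\}$, but the substance is identical.
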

\begin{proof}
For the first statement, observe that $\{(g,h),(g',h)\} \in \MB_G$ implies that
$g,g'\notin U_G$ while $\{(g'',h'),(g'', h'')\} \in \MU_G$ implies that $g''\in U_G$.
Consequently, for edges $\{(g,h),(g',h)\} \in \MB_G$ and $\{(g'',h'),(g'',h'')\} \in \MU_G$ 
it must hold that $g,g'\neq g''$. Therefore, $\MB_G$ and $\MU_G$ are
vertex disjoint for all $M_G\subseteq E_G$ and $M_H\subseteq E_H$. By similar
arguments, $\MB_H$ and $\MU_H$ are vertex disjoint for all $M_G\subseteq E_G$ and
$M_H\subseteq E_H$.

For the second statement, consider first $F_{G^h} = (V_G\times \{h\},\MB_G\cap E_{G^h})$ with $h\in
V_H$. By construction, $\{(g,h),(g',h)\}\in \MB_G$ if and only
if $h\in V_H$ and $\{g,g'\}\in M_G$. Hence, $\{(g,h),(g',h)\}$ is an edge in $F_{G^h}$
if and only if and $\{g,g'\}\in M_G$. Thus, we readily obtain an isomorphism between
distinct $F_{G^{h}}$ and $F_{G^{h'}}$ as well as between $F_{G^h}$ and $F_G = (V_G,M_G)$ for all
$h,h'\in V_H$.

Now consider $F_{\lefts{g}{H}} = (\{g\}\times V_H,\MU_G\cap E_{\lefts{g}{H}})$ with $g\in U_G$ and
$F_H = (V_H,M_H)$. By construction, $\{(g,h),(g,h')\}\in \MU_G$ if and only if $g\in
U_G$ and $\{h,h'\}\in M_H$. Hence, $\{(g,h),(g,h')\}$ is an edge in $F_{\lefts{g}{H}}$
if and only if $g\in U_G$ and $\{h,h'\}\in M_H$. Again, we readily obtain an
isomorphism between distinct $F_{\lefts{g}{H}}$ and $F_{\leftsprime{{g'}}{H}}$ as
well $F_{\lefts{g}{H}}$ and $F_H$ for all $g,g'\in U_G$.
\end{proof}

We provide now a characterization for $k$-matchings $\fbast(M_G,M_H)$
		in terms of properties of $M_G$ and $M_H$.

\begin{proposition}\label{prop:matching-fac}
Let $\star\in \{\cprod,\sprod,\lprod\}$ and
$G$ and $H$ be graphs with $M_G\subseteq E_G$ and $M_H\subseteq E_H$.
Then, $\fbast(M_G,M_H)$ is a $k$-matching of
$G\star H$ if and only if one of the following statements
is satisfied.
\begin{enumerate}[itemsep=0.02em] 
	\item $M_G$ is a perfect $k$-matching while $M_H$ could be any set.
	\item $M_G$ and $M_H$ are $k$-matchings of $G$ and $H$, respectively.
\end{enumerate}
Analogous result hold for $\fbast(M_H, M_G)$.
\end{proposition}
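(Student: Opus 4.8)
The plan is to reduce the whole statement to one degree formula for the subgraph $F = (V, \fbast(M_G,M_H))$ of $G\star H$ and then read off both implications by a short case split. Write $\fbast(M_G,M_H) = \MB_G \cup \MU_G$ and recall from Lemma~\ref{lem:basics-MB-MG} that $\MB_G$ and $\MU_G$ are vertex disjoint. First I would fix a vertex $(g,h)\in V$ and locate the matched edges meeting it: every edge of $\MB_G$ incident to $(g,h)$ lies in the $G$-layer $G^{h}$ and forces $g\notin U_G$, while every edge of $\MU_G$ incident to $(g,h)$ lies in the $H$-layer $\lefts{g}{H}$ and forces $g\in U_G$. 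Combining this with the two isomorphisms of Lemma~\ref{lem:basics-MB-MG}, namely $(V_G\times\{h\},\MB_G\cap E_{G^{h}})\simeq(V_G,M_G)$ and $(\{g\}\times V_H,\MU_G\cap E_{\lefts{g}{H}})\simeq(V_H,M_H)$, yields
\[
  \deg_F\big((g,h)\big)=
  \begin{cases}
    \deg_{(V_G,M_G)}(g), & g\notin U_G,\\[2pt]
    \deg_{(V_H,M_H)}(h), & g\in U_G.
  \end{cases}
\]

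For the ``only if'' direction I would assume $F$ is a $k$-matching, i.e.\ every vertex of $G \star H$ has $F$-degree in $\{0,k\}$. If some $g$ is matched by $M_G$, then $g\notin U_G$ and $\deg_F((g,h))=\deg_{(V_G,M_G)}(g)\ge 1$ for every $h\in V_H$, forcing $\deg_{(V_G,M_G)}(g)=k$; since $M_G$-unmatched vertices have $M_G$-degree $0$, this already shows that $M_G$ is a $k$-matching of $G$. Then I split on $U_G$: if $U_G=\emptyset$, the $k$-matching $M_G$ has no unmatched vertex and hence is a perfect $k$-matching, giving case~(1) with $M_H$ arbitrary; if $U_G\neq\emptyset$, fix $u\in U_G$ and note that $\deg_F((u,h))=\deg_{(V_H,M_H)}(h)\in\{0,k\}$ for every $h\in V_H$, so $M_H$ is a $k$-matching of $H$ and case~(2) holds.

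For the ``if'' direction I would simply substitute the hypotheses into the degree formula: if (1) holds then $U_G=\emptyset$, so every $(g,h)$ has $\deg_F((g,h))=\deg_{(V_G,M_G)}(g)=k$ and $F$ is $k$-regular, in particular a $k$-matching; if (2) holds then $\deg_F((g,h))$ equals $k$ when $g\notin U_G$ and lies in $\{0,k\}$ when $g\in U_G$, so again every vertex has $F$-degree $0$ or $k$. Finally, the claim for $\fbast(M_H,M_G)$ needs no new work: exchanging the roles of $G$ and $H$ exchanges $\MB_G\leftrightarrow\MB_H$ and $\MU_G\leftrightarrow\MU_H$, so the same argument applies verbatim with $G$ and $H$ swapped. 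I expect the only delicate point to be the bookkeeping behind the degree formula — correctly confining the $\MB_G$-edges at a vertex to a single $G$-layer and the $\MU_G$-edges to a single $H$-layer, and transporting the counts through the isomorphisms of Lemma~\ref{lem:basics-MB-MG} — but that lemma was set up precisely to make this routine.
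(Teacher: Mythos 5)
Your proof is correct and rests on the same ingredients as the paper's: the decomposition $\fbast(M_G,M_H)=\MB_G\cup\MU_G$, their vertex-disjointness, and the layer isomorphisms of Lemma~\ref{lem:basics-MB-MG}. The only difference is organizational — you package everything into a single pointwise degree formula for $F$ and read both implications off it, whereas the paper argues layer-by-layer that the restrictions of $\MB_G$ and $\MU_G$ are $k$-matchings of the copies of $G$ and $H$; both routes are sound and essentially equivalent.
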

\begin{proof} 
For the \emph{if}-direction,  assume first that $M_G$ is a perfect $k$-matching of $G$. In this case, Lemma
\ref{lem:basics-MB-MG} implies that $\MB_G\cap E_{G^h}$ is a (perfect) $k$-matching of
every copy $G^h$ of $G$ with $h\in V_H$. In particular, $U_G=\emptyset$ and hence,
$\MU_G=\emptyset$ and $\fbast(M_G,M_H) = \MB_G$. Thus, $\fbast(M_G,M_H)$ contains
only edges $\{(g,h),(g',h)\}$
with $\{g,g'\}\in M_G$ and $h\in V_H$. Since distinct
copies $G^h$ and $G^{h'}$ of $G$ are vertex disjoint and since $\MB_G\cap E_{G^h}$ is a
$k$-matching of every copy $G^h$ of $G$ in $G\star H$, it follows that
$\fbast(M_G,M_H)  = \MB_G$ is a $k$-matching of $G\star H$.

Now assume that $M_G$ and $M_H$ are $k$-matchings of $G$ and $H$, respectively. By
Lemma \ref{lem:basics-MB-MG}, $\MB_G\cap E_{G^h}$ is a $k$-matching of every copy
$G^h$ of $G$ and $\MU_G\cap E_{\lefts{g}{H}}$ is a $k$-matching of every copy
$\lefts{g}{H}$ of $H$ with $g\in U_G$ in $G\star H$. By Lemma \ref{lem:basics-MB-MG},
$\MB_G$ and $\MU_G$ are vertex disjoint. This together with the fact that no further
edges $\{(g,h),(g',h')\}$ with $g\neq g'$ and $h\neq h'$ are contained in
$\fbast(M_G,M_H)$ and that distinct copies $G^h$ and $G^{h'}$ of $G$ as well as distinct
copies $\lefts{g}{H}$ and $\lefts{g'}{H}$ of $H$ are vertex disjoint implies that
$\fbast(M_G,M_H)$ is a $k$-matching of $G\star H$.

For the \emph{only-if}-direction suppose that $\fbast(M_G,M_H)$ is a $k$-matching of
$G\star H$. By Lemma \ref{lem:basics-MB-MG}, $\MB_G$ and $\MU_G$ are vertex disjoint and
thus, $\MB_G$ and $\MU_G$ are $k$-matchings of $G\star H$.

By construction, either $\MB_G=\emptyset$ or $\MB_G$ contains solely edges of copies of $G$. This and the fact
that distinct copies $G^h$ and $G^{h'}$ of $G$ are vertex disjoint in $G\star H$
implies that $\MB_G\cap E_{G^h}$ is a $k$-matching in each copy $G^h$ of $G$.
Moreover, by Lemma \ref{lem:basics-MB-MG}, the graph $(V_G\times \{h\},\MB_G\cap E_{G^h})$ is
isomorphic to $(V_G,M_G)$ for every $h\in V_H$. Since $G$ and $G^h$ are isomorphic
for every $h\in V_H$, $M_G$ is a $k$-matching in $G$. Thus, if $U_G=\emptyset$ then,
$M_G$ is a perfect $k$-matching in $G$. Now assume that $U_G\neq \emptyset$. By
construction, every edge in $\MU_G$ must be located in some copy $\lefts{g}{H}$ of
$H$ in $G\star H$ where $g\in U_G$. Since distinct copies $\lefts{g}{H}$ and
$\lefts{g'}{H}$ of $H$ are vertex disjoint and since $\MU_G$ is a $k$-matching of
$G\star H$, we can conclude that $\MU_G\cap E_{\lefts{g}{H}}$ is a $k$-matching in
each copy $\lefts{g}{H}$ of $H$ with $g\in U_G$. Moreover, by Lemma
\ref{lem:basics-MB-MG}, the graph $(\{g\}\times V_H,\MU_G\cap E_{\lefts{g}{H}})$ is isomorphic to
$(V_H,M_H)$ for every $g\in U_G$. Consequently, $M_H$ is a $k$-matching in $H$.

By analogous arguments we obtain the proof for $\fbast(M_H, M_G)$.
\end{proof}

As a direct consequence of Prop.\ \ref{prop:matching-fac}, we obtain the following result.
\begin{corollary}
\label{cor:matching-fac}
Let $G$ and $H$ be graphs with $M_G\subseteq E_G$ and $M_H\subseteq E_H$ and
$\star\in \{\cprod,\sprod,\lprod\}$. Then, $\fbast(M_G,M_H)$ \emph{and}
$\fbast(M_H,M_G)$ are both $k$-matchings of $G\star H$ if and only if $M_G$ and $M_H$ are $k$-matchings
of $G$ and $H$, respectively.
\end{corollary}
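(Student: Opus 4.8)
The plan is to obtain the statement as an immediate logical consequence of Proposition \ref{prop:matching-fac}, using the two observations that every perfect $k$-matching is in particular a $k$-matching, and that Condition~2 of Proposition \ref{prop:matching-fac} is symmetric in the roles of $G$ and $H$.

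For the \emph{if}-direction I would assume that $M_G$ and $M_H$ are $k$-matchings of $G$ and $H$. Then Condition~2 of Proposition \ref{prop:matching-fac} holds, so $\fbast(M_G,M_H)$ is a $k$-matching of $G\star H$; since the analogous statement for $\fbast(M_H,M_G)$ rests on exactly the same Condition~2, one concludes that $\fbast(M_H,M_G)$ is a $k$-matching of $G\star H$ as well.

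For the \emph{only-if}-direction I would assume that both $\fbast(M_G,M_H)$ and $\fbast(M_H,M_G)$ are $k$-matchings of $G\star H$. Applying Proposition \ref{prop:matching-fac} to $\fbast(M_G,M_H)$ yields that either $M_G$ is a perfect $k$-matching of $G$, or $M_G$ and $M_H$ are $k$-matchings of their respective factors; in either case $M_G$ is a $k$-matching of $G$. Applying the analogous statement to $\fbast(M_H,M_G)$ yields, by the same reasoning, that $M_H$ is a $k$-matching of $H$. Together these give that $M_G$ and $M_H$ are $k$-matchings of $G$ and $H$, respectively.

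Since the whole argument is merely a case distinction on the two conditions of Proposition \ref{prop:matching-fac}, I do not expect any genuine obstacle; the only subtlety worth recording is that Condition~1 already entails that the corresponding factor carries a $k$-matching, so that in every case both factors do.
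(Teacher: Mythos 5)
Your proposal is correct and matches the paper's intent: the paper gives no explicit proof, stating the corollary as a direct consequence of Proposition~\ref{prop:matching-fac}, and your case analysis (Condition~2 giving both directions of the ``if'' part, and the observation that Condition~1 for each ordering still forces the corresponding factor's set to be a $k$-matching) is precisely the reasoning the authors leave implicit.
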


For later reference, we provide the following simple result.

\begin{lemma}\label{lem:fbast-gh-unmatch}
Let $\star\in\{\cprod,\sprod,\lprod\}$, $G$ and $H$ be two graphs and
$M_G\subseteq E_G$ and $M_H\subseteq E_H$.
Then, for all $g\in V_G$ and $h\in V_H$,
the following statements are equivalent.
\begin{enumerate}
\item $g\in V_G$ is $M_G$-unmatched in $G$ and $h\in V_H$ is $M_H$-unmatched in $H$,
\item $(g,h)\in V_{G\star H}$ is $\fbast(M_G,M_H)$-unmatched in $G\star H$,
\item $(g,h)\in V_{G\star H}$ is $\fbast(M_H,M_G)$-unmatched in $G\star H$.
\end{enumerate} 
\end{lemma}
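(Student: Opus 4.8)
The plan is to prove the equivalences by showing $(1)\Leftrightarrow(2)$ and $(1)\Leftrightarrow(3)$; by the symmetry of the construction (swapping the roles of $G$ and $H$), it suffices to establish $(1)\Leftrightarrow(2)$ carefully, and the argument for $(1)\Leftrightarrow(3)$ is obtained by interchanging $G\leftrightarrow H$ and $M_G\leftrightarrow M_H$ throughout. Fix $g\in V_G$ and $h\in V_H$.

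First I would unwind what it means for $(g,h)$ to be incident to an edge of $\fbast(M_G,M_H)=\MB_G\cup\MU_G$. By Lemma~\ref{lem:basics-MB-MG}, $\MB_G$ and $\MU_G$ are vertex disjoint, so $(g,h)$ is incident to at most one of the two parts. An edge of $\MB_G$ incident to $(g,h)$ has the form $\{(g,h),(g',h)\}$ with $\{g,g'\}\in M_G$; such an edge exists if and only if $g$ is $M_G$-matched in $G$. An edge of $\MU_G$ incident to $(g,h)$ has the form $\{(g,h),(g,h')\}$ with $g\in U_G$ and $\{h,h'\}\in M_H$; such an edge exists if and only if $g\in U_G$ (i.e.\ $g$ is $M_G$-unmatched) \emph{and} $h$ is $M_H$-matched in $H$. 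Hence $(g,h)$ is $\fbast(M_G,M_H)$-matched if and only if ($g$ is $M_G$-matched) or ($g$ is $M_G$-unmatched and $h$ is $M_H$-matched). Negating this statement: $(g,h)$ is $\fbast(M_G,M_H)$-unmatched if and only if $g$ is $M_G$-unmatched and $h$ is $M_H$-unmatched, which is exactly statement~(1). This gives $(1)\Leftrightarrow(2)$.

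For $(1)\Leftrightarrow(3)$ I would run the mirror-image computation: an edge of $\MB_H$ incident to $(g,h)$ exists iff $h$ is $M_H$-matched, and an edge of $\MU_H$ incident to $(g,h)$ exists iff $h\in U_H$ and $g$ is $M_G$-matched; again using that $\MB_H$ and $\MU_H$ are vertex disjoint (Lemma~\ref{lem:basics-MB-MG}), negation yields that $(g,h)$ is $\fbast(M_H,M_G)$-unmatched iff $h$ is $M_H$-unmatched and $g$ is $M_G$-unmatched, i.e.\ statement~(1). Combining, $(2)\Leftrightarrow(1)\Leftrightarrow(3)$, which proves the chain of equivalences.

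There is no real obstacle here; the only thing to be careful about is the logical form of the negation — the matched condition is a disjunction whose second disjunct is itself a conjunction involving the unmatchedness of $g$, so one must check that negating it collapses cleanly to ``$g$ unmatched and $h$ unmatched'' without leaving a residual case. The vertex-disjointness from Lemma~\ref{lem:basics-MB-MG} is used only to guarantee we are not double-counting incidences, but it is not strictly needed for the unmatched characterization (a vertex is unmatched by a union iff it is unmatched by each part regardless of disjointness); I would mention it for cleanliness but the core argument is just the edge-form bookkeeping above.
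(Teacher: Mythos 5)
Your proof is correct and rests on the same observations as the paper's: an edge of $\MB_G$ incident to $(g,h)$ forces $g$ to be $M_G$-matched, and an edge of $\MU_G$ incident to $(g,h)$ forces $g\in U_G$ and $h$ to be $M_H$-matched, with the mirror statements for $\fbast(M_H,M_G)$. The only difference is presentational — you package the incidence analysis as a single ``matched iff'' biconditional and negate it, whereas the paper argues the two implications separately by contradiction — and your negation does collapse cleanly as you claim.
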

\begin{proof}
We will show (1) implies both (2) and (3), and that (2) implies (1) as well as (3) implies (1). 
To this end, first assume
(1) holds, so that $g$ is $M_G$-unmatched in $G$ and $h$ is $M_H$-unmatched in $H$. By definition there
are no edges in $\MB_G$ nor in $\MB_H$ incident to the vertex $v=(g,h)$. 
Moreover, if $\{(g,h),(g,h')\}\in\MU_G$, then $\{h,h'\}\in M_H$ per definition, 
contradicting $h$ being unmatched by $M_H$.
Hence, $\{(g,h),(g,h')\}\notin\MU_G$ and, by similar arguments, 
there is no incident edge of $v=(g,h)$ in $\MU_H$. 
Taken the latter arguments together, $v$ is both $\fbast(M_G,M_H)$-unmatched and 
$\fbast(M_H,M_G)$-unmatched in $G\star H$.

Now assume that (2) is satisfied and
let $(g,h)$ be an $\fbast(M_G,M_H)$-unmatched vertex in $G\star H$. If $g$ would
	be $M_G$-matched in $G$, then there is an edge $\{g,g'\}\in M_G$. By definition
	$\{(g,h),(g',h)\}\in \MB_G \subseteq \fbast(M_G,M_H)$ and thus, $(g,h)$ is not
	$\fbast(M_G,M_H)$-unmatched in $G\star H$; a contradiction. Assume, for contradiction,
	that $h$ is $M_H$-matched in $H$ and hence, there is an edge $\{h,h'\}\in M_H$. By
	the latter argument, $g$ must be $M_G$-unmatched in $G$ and thus, $g\in U_G$. By
	definition, $\{(g,h),(g,h')\}\in \MU_G\subseteq \fbast(M_G,M_H)$ and therefore,
	$(g,h)$ is not $\fbast(M_G,M_H)$-unmatched in $G\star H$; a contradiction.
	Therefore, $g$ is $M_G$-unmatched in $G$ and $h$ is $M_H$-unmatched in $H$
	and thus, Condition (1) is satisfied. By similar arguments one shows
	that (3) implies (1). 
\end{proof}


\subsection{$\fast$-constructions}

We continue with a natural way to construct $k$-matchings based on the non-Cartesian edges.

\begin{definition}\label{def:ast}
	Let $G$ and $H$ be graphs and let $M_G\subseteq E_G$ and $M_H\subseteq E_H$.
	Furthermore,	let $U_G\subseteq V_G$ and $U_H\subseteq V_H$ be the set
	of vertices that are unmatched by $M_G$ and $M_H$ in $G$ and $H$, respectively.
	For $G$ and $H$, we define the following set
   \begin{align*}
   \fast(M_G, M_H) 		\coloneqq &  \left \{\{(g,h),(g',h')\} \mid \{g,g'\}\in M_G, \{h,h'\}\in M_H \right \}.
   \end{align*}
\end{definition}

	We emphasize that, in contrast to $\fbast(M_G,M_H)$ and $\fbast(M_H,M_G)$,
	the sets    $\fast(M_G, M_H)$ and    $\fast(M_H, M_G)$ operate on different vertex sets,
	namely on $V_G\times V_H$ and $V_H\times V_G$, respectively.
Def.\ \ref{def:ast} is illustrated in Fig.\ \ref{fig:fcast-exmpl}.

Note, the edges in non-empty $\fast(M_G,M_H)$ do only exist in the strong, direct and
lexicographic product but not in the Cartesian product.
Hence, the products of interest in this part are the direct, strong
and lexicographic product. For later reference, we provide 
the following simple result.

\begin{lemma}\label{lem:degree-ast}
Let $\star\in \{\sprod,\lprod,\dprod\}$ and
$G$ and $H$ be graphs with $M_G\subseteq E_G$ and $M_H\subseteq E_H$.
Moreover, let $G\star H = (V,E)$ and $v=(g,h)\in V$ be a vertex that is incident to the edge $e \in E$.
If $e\in \fast(M_G,M_H)$, then $v$ has degree $k_g\cdot k_h$ in $F=\left(V,\fast(M_G,M_H)\right)$, 
where $k_g$ is the degree of $g$ in $(V_G,M_G)$ and $k_h$ is the degree of $h$ in $(V_H,M_H)$.

In particular, if $M_G$ is a $k_G$-matching of $G$ and $M_H$ a $k_H$-matching of $H$, 
then the degree of $v$ in $F$ is $k_G\cdot k_H$.
\end{lemma}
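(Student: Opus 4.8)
The plan is to compute the degree of $v=(g,h)$ in the subgraph $F=(V,\fast(M_G,M_H))$ directly from the definition of $\fast$. By Definition~\ref{def:ast}, an edge $e'=\{(g,h),(g',h')\}$ lies in $\fast(M_G,M_H)$ if and only if $\{g,g'\}\in M_G$ and $\{h,h'\}\in M_H$; in particular such $e'$ is a non-Cartesian edge, so $g\neq g'$ and $h\neq h'$. First I would observe that since $v$ is incident to some edge $e\in\fast(M_G,M_H)$, both $g$ is matched by $M_G$ and $h$ is matched by $M_H$, hence $k_g:=\deg_{(V_G,M_G)}(g)\geq 1$ and $k_h:=\deg_{(V_H,M_H)}(h)\geq 1$ are well-defined positive integers.

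Next I would set up a bijection between the edges of $F$ incident to $v$ and the Cartesian product of the set $A=\{g'\in V_G\mid \{g,g'\}\in M_G\}$ of $M_G$-neighbours of $g$ and the set $B=\{h'\in V_H\mid \{h,h'\}\in M_H\}$ of $M_H$-neighbours of $h$. Indeed, the map sending an edge $\{(g,h),(g',h')\}\in\fast(M_G,M_H)$ to the pair $(g',h')\in A\times B$ is well-defined by the defining condition of $\fast$, and it is injective since an edge of a simple graph is determined by its two endpoints (and $(g,h)$ is fixed). It is surjective because for any $(g',h')\in A\times B$ we have $\{g,g'\}\in M_G$ and $\{h,h'\}\in M_H$, so $\{(g,h),(g',h')\}\in\fast(M_G,M_H)$; note this really is an edge (i.e.\ the two endpoints differ) precisely because $g'\neq g$ and $h'\neq h$, which holds since $G$ and $H$ are loop-free. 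Counting gives $\deg_F(v)=|A\times B|=|A|\cdot|B|=k_g\cdot k_h$, as claimed.

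For the ``in particular'' part, if $M_G$ is a $k_G$-matching of $G$ then every vertex of $G$ has degree $0$ or $k_G$ in $(V_G,M_G)$; since $g$ is matched (as $v$ is incident to an edge of $\fast(M_G,M_H)$, forcing $g$ to be $M_G$-matched), we get $k_g=k_G$, and analogously $k_h=k_H$. Substituting into the formula just established yields $\deg_F(v)=k_G\cdot k_H$.

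I do not expect a genuine obstacle here; the statement is essentially bookkeeping. The only point requiring a little care is making explicit that every edge of $\fast(M_G,M_H)$ incident to $v$ has \emph{both} coordinates of its other endpoint differing from those of $v$ (so that the claimed bijection hits exactly the pairs in $A\times B$ and no degenerate ``edge'' is produced), which follows from $M_G,M_H$ consisting of genuine edges of the loop-free graphs $G,H$. I would also remark in passing that the argument is independent of the choice of $\star\in\{\sprod,\lprod,\dprod\}$ since $\fast(M_G,M_H)\subseteq E_{G\star H}$ for each of these three products, so $F$ is genuinely a subgraph of $G\star H$ in every case.
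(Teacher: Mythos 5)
Your proof is correct and follows essentially the same route as the paper's: both count the edges of $\fast(M_G,M_H)$ incident to $v=(g,h)$ by identifying them with pairs consisting of an $M_G$-neighbour of $g$ and an $M_H$-neighbour of $h$, yielding $k_g\cdot k_h$. Your version merely makes the underlying bijection and the non-degeneracy of the resulting edges explicit, which the paper leaves implicit.
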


\begin{proof}
Let $v=(g,h)\in V$ be a vertex that is incident to the edge $e \in E$ such
that $e = \{(g,h),(g',h')\}\in \fast(M_G,M_H)$. Hence, by definition, $\{g,g'\}\in
M_G$ and $\{h,h'\}\in M_H$. This together with the fact that 
$g$ has degree $k_g$ in $(V_G,M_G)$ and $h$ has degree $k_h$ in $(V_H,M_H)$
 implies that $g$ must be
incident to precisely the $k_g$ edges $\{g,g_1\},\dots \{g,g_{k_g}\}\in M_G$ and $h$ to
precisely the $k_h$ edges $\{h,h_1\},\dots \{h,h_{k_h}\}\in M_H$. By definition, the
vertex $v=(g,h)$ is incident to the $k_g\cdot k_h$ edges $\{(g,h),(g_i,h_j)\} \in
\fast(M_G,M_H)$ with $1\leq i\leq k_g$ and $1\leq j\leq k_h$.

In particular, if $M_G$ is a $k_G$-matching of $G$ and $M_H$ a $k_H$-matching of $H$, 
then all vertices in $G$, resp., $H$ must have degree $0$ or $k_G$, resp., $k_H$. 
Since $v=(g,h)$ is incident to the edge $e \in \fast(M_G,M_H)$, the latter arguments
imply that  the degree of $v$ in $G\star H$ is $k_G\cdot k_H$.
\end{proof}

We provide now a characterization for $k$-matchings $\fast(M_G,M_H)$ in terms of properties of $M_G$ and $M_H$.

\begin{proposition}\label{prop:matching-ast}
Let $\star\in \{\sprod,\lprod,\dprod\}$ and $G$ and $H$ be graphs with $M_G\subseteq E_G$ and $M_H\subseteq E_H$.
Then, $\fast(M_G,M_H)$ is a non-empty $k$-matching of $G\star H$ if and only if
$M_G$ is a non-empty $k_G$-matching of $G$ and $M_H$ is a non-empty $k_H$-matching of $H$, where $k_G\cdot k_H=k$
\end{proposition}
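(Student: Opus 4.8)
The plan is to prove both directions separately, using Lemma~\ref{lem:degree-ast} as the main tool for the ``if''-direction and a local analysis of layers for the ``only-if''-direction.

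For the \emph{if}-direction, suppose $M_G$ is a non-empty $k_G$-matching of $G$ and $M_H$ a non-empty $k_H$-matching of $H$. Since both are non-empty, $\fast(M_G,M_H)$ is non-empty. It remains to show that every vertex $v=(g,h)\in V_{G\star H}$ has degree $0$ or $k_G\cdot k_H$ in $F=(V,\fast(M_G,M_H))$. If $v$ is incident to some edge of $\fast(M_G,M_H)$, then Lemma~\ref{lem:degree-ast} immediately gives $\deg_F(v)=k_G\cdot k_H$, so setting $k\coloneqq k_G\cdot k_H$ shows $\fast(M_G,M_H)$ is a $k$-matching. (One should remark that every edge of $\fast(M_G,M_H)$ is indeed an edge of $G\star H$ for $\star\in\{\sprod,\lprod,\dprod\}$, which is immediate from the product definitions since such edges satisfy condition (iii).)

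For the \emph{only-if}-direction, suppose $\fast(M_G,M_H)$ is a non-empty $k$-matching of $G\star H$. Non-emptiness forces the existence of an edge $\{g,g'\}\in M_G$ and an edge $\{h,h'\}\in M_H$, so both $M_G$ and $M_H$ are non-empty. The key step is to show $M_G$ is a $k_G$-matching for some fixed $k_G$, i.e.\ every vertex of $G$ matched by $M_G$ has the \emph{same} degree in $(V_G,M_G)$; symmetrically for $M_H$. Fix a vertex $g_0\in V_G$ incident to some edge of $M_G$, and let $k_G\coloneqq\deg_{(V_G,M_G)}(g_0)$; fix $h_0$ incident to some edge of $M_H$ and let $k_H\coloneqq\deg_{(V_H,M_H)}(h_0)$. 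Pick any edge $\{h,h'\}\in M_H$; then $(g_0,h)$ is incident to the edge $\{(g_0,h),(g_0',h')\}\in\fast(M_G,M_H)$ (for a suitable $g_0'$), so by Lemma~\ref{lem:degree-ast} its degree in $F$ equals $\deg_{(V_G,M_G)}(g_0)\cdot\deg_{(V_H,M_H)}(h)$. Since $F$ is a $k$-matching this product equals $k$. Now take another vertex $g_1$ incident to an edge of $M_G$ with degree $k_1\coloneqq\deg_{(V_G,M_G)}(g_1)$: the vertex $(g_1,h)$ is likewise incident to an edge of $\fast(M_G,M_H)$, so $k_1\cdot\deg_{(V_H,M_H)}(h)=k$ as well. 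Combining, $k_G\cdot\deg_{(V_H,M_H)}(h)=k_1\cdot\deg_{(V_H,M_H)}(h)$, and since $\deg_{(V_H,M_H)}(h)\neq 0$ we get $k_1=k_G$. Hence all $M_G$-matched vertices have degree $k_G$, so $M_G$ is a $k_G$-matching; symmetrically $M_H$ is a $k_H$-matching. Finally, plugging back in, $k=k_G\cdot k_H$.

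The main obstacle I anticipate is the \emph{only-if}-direction: one must rule out the possibility that $M_G$ (or $M_H$) assigns different degrees to different matched vertices, and the subtlety is that a vertex $(g,h)$ of $F$ with $g$ an $M_G$-matched vertex but $h$ an $M_H$-\emph{unmatched} vertex has degree $0$ in $F$ — so the degree-transfer argument only constrains $g$ when paired with an $M_H$-matched $h$. The fix, as above, is simply to always pair with a fixed $M_H$-matched vertex $h$ (which exists by non-emptiness); this localizes the argument cleanly. A minor point worth double-checking is that no edge of $\fast(M_G,M_H)$ ``collapses'' (i.e.\ the two endpoints $(g,h)$ and $(g',h')$ are always distinct), which holds because $\{g,g'\}\in M_G\subseteq\binom{V_G}{2}$ forces $g\neq g'$.
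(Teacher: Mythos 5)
Your proposal is correct and takes essentially the same approach as the paper: the \emph{if}-direction is a direct application of Lemma~\ref{lem:degree-ast}, and the \emph{only-if}-direction rests on the same degree-transfer idea, with the paper phrasing it as a contradiction via the ``cross'' vertex $(g',h)$ while you pair two $M_G$-matched vertices with a common $M_H$-matched vertex $h$ and conclude equality directly. The two arguments are the same computation presented in slightly different order.
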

\begin{proof}
	Let $G\star H = (V,E)$ and $F = (V, \fast(M_G,M_H))$. Suppose first that $M_G$ is a
	non-empty $k_G$-matching of $G$ and $M_H$ is a non-empty $k_H$-matching 
	of $H$, where $k_G, k_H\geq 1$ and $k_G\cdot k_H=k$. 
	By definition $F$ is a subgraph of $G\star H$. If $v\in V$ is
	not incident to an edge in $\fast(M_G,M_H)$ it has degree $0$ in $F$. If $v$ is
	incident to some edge $e \in \fast(M_G,M_H)$, then Lemma \ref{lem:degree-ast} implies
	that $v$ has degree $k = k_G\cdot k_H$ in $F$. Hence, $\fast(M_G,M_H)$ is a
	$k$-matching of $G\star H$. In particular, since neither $M_G$ nor $M_H$ are empty,
	 $\fast(M_G,M_H)\neq\emptyset$.

	Assume now that $\fast(M_G,M_H)$ is a non-empty $k$-matching of $G\star H$ for some
	$M_G\subseteq E_G$ and $M_H\subseteq E_H$. 
	By definition, both $M_G$ and $M_H$ must
	be non-empty. Assume, for contradiction, that $M_G$ is not a $k_G$-matching
	for all $k_G$ that divide $k$. In this case, there must be at least two vertices $g$ and
	$g'$ with different positive degrees in the subgraph $(V_G,M_G) \subseteq G$.
	Assume that $g$ is incident to precisely $k_g$ edges in $M_G$ and that $g'$ is
	incident to precisely $k_{g'}$ edges in $M_G$. W.l.o.g. assume that $1\leq
	k_g<k_{g'}$. Since $\fast(M_G,M_H)$ is a $k$-matching of $G\star H$ and since $M_H$
	is non-empty, there is a vertex $v=(g,h)$ and a vertex $w=(g',h')$ that have both
	degree $k$ in $F$. Let $k_h$ and $k_{h'}$ be the degree of $h$ and $h'$ in
	$(V_H,M_H)\subseteq H$, respectively.
	By similar arguments as in proof of Lemma \ref{lem:degree-ast},
	$k = k_gk_h =	k_{g'}k_{h'}$.
	Since $k_g<k_{g'}$ it must hold $k_h>k_{h'}$.
	In this case, vertex $(g',h)$ must be incident to $k_{g'}k_h>k$ in $\fast(M_G,M_H)$;
	a contradiction.
\end{proof}

Note that in Prop.\ \ref{prop:matching-ast}, if  $M_G$ and $M_H$ are empty, then
$\fast(M_G, M_H)=\emptyset$ is a $k$-matching of $G\star H$. 
However the converse is not satisfied, that is,
if  $\fast(M_G,M_H)=\emptyset$, then one of $M_G$ and
$M_H$ is empty by construction, but not necessarily both.
Thus, we may have that $M_G=\emptyset$ and $M_H$ is some edge set which is 
not a $k_H$-matching, or \emph{vice versa}.


\subsection{$\fcast$-constructions}

Now, we provide a construction of $k$-matchings that is based on $\fast(M_G,M_H)$
   as well as $\MU_G$ and $\MU_H$.

\begin{definition}\label{def:cast}
	For graphs $G$ and $H$ and $M_G\subseteq E_G$ and $M_H\subseteq E_H$, we define
	the following set
   \begin{align*}
   \fcast(M_G, M_H)   \coloneqq &\,    \fast(M_G,M_H) \cup \MU_G \cup \MU_H
   \end{align*}
\end{definition}

Def.\ \ref{def:cast} is illustrated in Fig.\ \ref{fig:fcast-exmpl}.
Recap, the edges in $\fast(M_G,M_H)$ do only exist in the strong, direct and lexicographic product, in general.
While edges in $\fast(M_G,M_H)$ exists in a direct product, edges $\MU_G$, resp.,  $\MU_H$ do not exists in $G \dprod H$.
Hence, the products of interest in this part are the strong and lexicographic product.

\begin{figure}[t]
	\begin{center}
		\includegraphics[width=0.45\textwidth]{./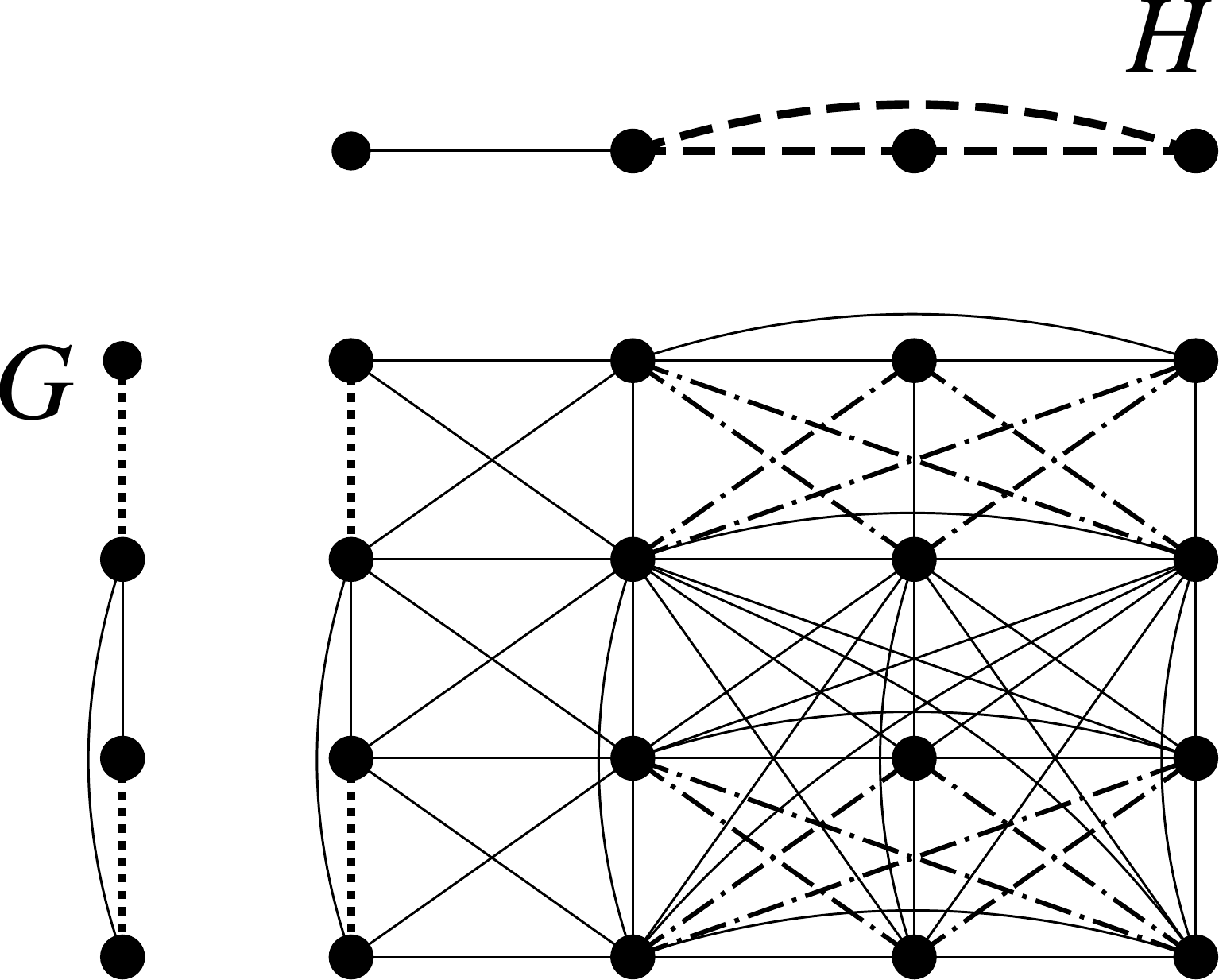}
	\end{center}
	\caption{Shown is the strong product $G\sprod H$ and $\fcast(M_G,M_H)$ for the
	     		 perfect $1$-matching $M_G$ of $G$ and the maximal $2$-matching $M_H$ of $H$.
	     		 The set $\fast(M_G,M_H)$ consists of the dashed-dotted lines and
	     		  and $\MU_H$ of the dashed lines in $G\sprod H$. Here, $\MU_G=\emptyset$.
	     		 Note, $\fcast(M_G,M_H)\subseteq E_{G\sprod H}$ is not a $k$-matching
	     		 since vertices in $V_{G\star H}$ are incident to $1$ or $2$ edges
	     		 in $\fcast(M_G,M_H)$.}
		\label{fig:fcast-exmpl}
\end{figure}

\begin{lemma}\label{lem:ast-sets-disjoint}
	The sets  $\fast(M_G,M_H)$,  $\MU_G$, $\MU_H$ are pairwise vertex disjoint.
\end{lemma}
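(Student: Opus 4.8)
The plan is to prove pairwise vertex-disjointness of the three sets $\fast(M_G,M_H)$, $\MU_G$, and $\MU_H$ by examining, for each pair, the form that an edge incident to a common vertex $v=(g,h)$ would have to take and deriving a contradiction from the defining membership conditions. Recall the key membership data: an edge in $\fast(M_G,M_H)$ has the form $\{(g,h),(g',h')\}$ with $\{g,g'\}\in M_G$ and $\{h,h'\}\in M_H$; an edge in $\MU_G$ has the form $\{(u,h),(u,h')\}$ with $u\in U_G$ and $\{h,h'\}\in M_H$; and an edge in $\MU_H$ has the form $\{(g,u),(g',u)\}$ with $u\in U_H$ and $\{g,g'\}\in M_G$. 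So the first coordinate of an $\MU_G$-endpoint lies in $U_G$, whereas the first coordinates of $\fast$-endpoints and $\MU_H$-endpoints are $M_G$-matched, and similarly the second coordinate of an $\MU_H$-endpoint lies in $U_H$, whereas the second coordinates of $\fast$-endpoints and $\MU_G$-endpoints are $M_H$-matched.

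Concretely, first I would handle the pair $\MU_G$ versus $\MU_H$. Suppose $v=(g,h)$ lies on an edge of $\MU_G$ and on an edge of $\MU_H$. From the $\MU_G$-membership, the first coordinate $g$ must equal some $u\in U_G$, i.e.\ $g$ is $M_G$-unmatched. From the $\MU_H$-membership, the first coordinates of both endpoints are joined by an edge of $M_G$, so $g$ is $M_G$-matched; contradiction. (Symmetrically one could argue via the second coordinate.) Next, the pair $\fast(M_G,M_H)$ versus $\MU_G$: if $v=(g,h)$ lies on an edge of $\fast(M_G,M_H)$ then $g$ is incident to an edge of $M_G$, hence $g\notin U_G$; but lying on an edge of $\MU_G$ forces $g\in U_G$; contradiction. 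Finally, the pair $\fast(M_G,M_H)$ versus $\MU_H$ is symmetric: lying on a $\fast$-edge makes $h$ incident to an edge of $M_H$, hence $h\notin U_H$, while lying on an $\MU_H$-edge forces $h\in U_H$; contradiction. In each case the contradiction shows no vertex is shared, i.e.\ the sets are pairwise vertex disjoint.

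There is essentially no obstacle here; the argument is a routine bookkeeping of which coordinate is forced to be matched versus unmatched, entirely analogous to the first part of Lemma~\ref{lem:basics-MB-MG}. The only mild care needed is to make the statement precise: "vertex disjoint" for edge sets means (as defined in the Preliminaries) that $f\cap f'=\emptyset$ for all $f$ in one set and $f'$ in the other, so it suffices to show the two sets cannot be incident to a common vertex, which is exactly what the coordinate analysis above delivers. One should also note in passing that each of the three sets is a legitimate subset of $E_{G\star H}$ for $\star\in\{\sprod,\lprod\}$ (so that "vertex disjoint" is being applied to honest edge sets), but this is already implicit in Definitions~\ref{def:ast} and~\ref{def:bast}.
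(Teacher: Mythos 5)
Your proposal is correct and follows essentially the same approach as the paper's proof: a coordinate-by-coordinate check that membership in one set forces a coordinate to be matched while membership in another forces it to be unmatched. The only cosmetic difference is that for the pair $\MU_G$ versus $\MU_H$ you argue via the first coordinate while the paper argues via the second; both are equally valid.
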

\begin{proof}
If $\{(g,h),(g',h')\}\in \fast(M_G,M_H)$, then the vertices $g$ and $g'$ are
$M_G$-matched in $G$ and $h$ and $h'$ are $M_H$-matched in $H$.
Consequently, $g,g'\notin U_G$ and $h,h'\notin U_H$.
Hence, none of the edge in $\fast(M_G,M_H)$ can be incident to an edge contained in $\MU_G$ or $\MU_H$.
Therefore,
$\fast(M_G,M_H)$ and $\MU_G$ as well as $\fast(M_G,M_H)$ and $\MU_H$  are vertex disjoint.
Similarly, $\MU_G$ and $\MU_H$ are vertex disjoint, since every edge
$\{(g,h),(g,h')\}\in \MU_G$ satisfies $\{h,h'\}\in M_H$ and thus, $h,h'\notin U_H$
which implies that  none of the edges in $\MU_G$ can be incident to an edge in 
$\MU_H$ and \emph{vice versa}.
\end{proof}

For later reference we provide here a result similar to Lemma \ref{lem:degree-ast}.

\begin{lemma}\label{lem:degree-cast}
Let $\star\in \{\sprod,\lprod\}$ and
$G$ and $H$ be graphs with $k_G$-matching $M_G$ and $k_H$-matching $M_H$, respectively.
Moreover, let $G\star H = (V,E)$ and $v\in V$ be a vertex that is incident to the edge $e \in E$.
If $e\in \MU_G$ (resp., $e\in \MU_H$), then $v$ has degree $k_H$
(resp., $k_G$) in $F=\left(V, \fcast(M_G,M_H)\right)$.
\end{lemma}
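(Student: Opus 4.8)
The plan is to reduce the claim to the local edge‑disjointness established in Lemma~\ref{lem:ast-sets-disjoint} together with the structure of $\MU_G$ and $\MU_H$ recorded in Lemma~\ref{lem:basics-MB-MG}. First I would fix $\star\in\{\sprod,\lprod\}$, graphs $G,H$ with a $k_G$‑matching $M_G$ and a $k_H$‑matching $M_H$, and a vertex $v=(g,h)\in V$ incident to some edge $e\in\fcast(M_G,M_H)$; by Definition~\ref{def:cast}, $e$ lies in exactly one of $\fast(M_G,M_H)$, $\MU_G$, $\MU_H$. Suppose $e\in\MU_G$; the case $e\in\MU_H$ is symmetric. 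The key observation is that by Lemma~\ref{lem:ast-sets-disjoint} the three sets $\fast(M_G,M_H)$, $\MU_G$, $\MU_H$ are pairwise vertex disjoint, so \emph{every} edge of $F=(V,\fcast(M_G,M_H))$ incident to $v$ must already lie in $\MU_G$. Hence the degree of $v$ in $F$ equals its degree in $(V,\MU_G)$.

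Next I would compute that degree. Since $e=\{(g,h),(g,h')\}\in\MU_G$ we have, by the definition of $\MU_G$, that $g\in U_G$ (i.e.\ $g$ is $M_G$‑unmatched) and $\{h,h'\}\in M_H$; in particular $h$ is $M_H$‑matched, so $h$ has degree exactly $k_H$ in $(V_H,M_H)$. Now $\MU_G\cap E_{\lefts{g}{H}}$ is, by Lemma~\ref{lem:basics-MB-MG}, isomorphic to $(V_H,M_H)$ via the coordinate projection; and all edges of $\MU_G$ incident to $v=(g,h)$ stay inside the copy $\lefts{g}{H}$ (they are of the form $\{(g,h),(g,h'')\}$ with $\{h,h''\}\in M_H$). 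Therefore $v$ is incident in $\MU_G$ to precisely the edges $\{(g,h),(g,h_i)\}$ where $\{h,h_1\},\dots,\{h,h_{k_H}\}$ are the $k_H$ edges of $M_H$ at $h$, giving $\deg_F(v)=k_H$. The symmetric argument with the roles of $G$ and $H$ swapped — using that $\MU_H\cap E_{G^u}\simeq(V_G,M_G)$ for $u\in U_H$ — yields $\deg_F(v)=k_G$ when $e\in\MU_H$.

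There is essentially no hard step here: the only thing to be careful about is making explicit that the vertex disjointness of the three constituent sets forces \emph{all} incident $F$‑edges to come from the single set containing $e$, so that no cross‑terms inflate the degree — this is exactly where Lemma~\ref{lem:ast-sets-disjoint} does the work, and it replaces what would otherwise be a tedious case check against $\fast(M_G,M_H)$ and the other $\MU$ set. The remaining bookkeeping (identifying the incident edges inside one $H$‑layer, resp.\ one $G$‑layer, and counting them via the isomorphism of Lemma~\ref{lem:basics-MB-MG}) is routine.
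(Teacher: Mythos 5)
Your proposal is correct and follows essentially the same route as the paper's proof: both use the pairwise vertex disjointness from Lemma~\ref{lem:ast-sets-disjoint} to confine all $F$-edges at $v$ to the single set containing $e$, and then count the $k_H$ (resp.\ $k_G$) incident edges inside one layer via the isomorphism of Lemma~\ref{lem:basics-MB-MG}. The only cosmetic difference is that the paper additionally spells out that no $\fast(M_G,M_H)$-edge meets $v$ because $g$ is $M_G$-unmatched, a fact your disjointness argument already subsumes.
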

\begin{proof}
Let $v=(g,h)\in V$ be a vertex that is incident to the edge $e \in E$
such that $e = \{(g,h),(g',h')\}\in \MU_G$.
Observe first that, by definition $g=g'$ is an $M_G$-unmatched vertex and $\{h,h'\}\in M_H$.
Hence, $h$ must have degree $k_H$ in $(V_H,M_H)$ and $g$ must have degree 0 in $(V_G,M_G)$.
By Lemma \ref{lem:basics-MB-MG},
$(\{g\}\times V_H,\MU_G\cap E_{\lefts{g}{H}})$, $g\in U_G$ and $(V_H,M_H)$ are isomorphic.
Note that distinct copies of $H$ are vertex disjoint and that
$\MU_G$ does not contain  edges connecting vertices of distinct $H$-layers. Moreover, there is
no edge in $\fast(M_G,M_H)$ that is
incident to $v$, since $g$ is unmatched by $M_G$.
Taken the latter two arguments together with Lemma \ref{lem:ast-sets-disjoint},
it follows that $(g,h)$ must have degree $k_H$ in $F$.

By similar arguments, $(g,h)$ must have degree $k_G$ in $F$
provided that $e\in \MU_H$.
\end{proof}

We provide now a characterization for $k$-matchings $\fcast(M_G,M_H)$
		in terms of properties of $M_G$ and $M_H$.

\begin{proposition}\label{prop:matching-cast}
Let $\star\in \{\sprod,\lprod\}$ and  $G$ and $H$ be graphs with $M_G\subseteq E_G$ and $M_H\subseteq E_H$.
Then, $\fcast(M_G,M_H)$ is a $k$-matching of $G\star H$ for some positive integer $k$
if and only if one of the following statements is satisfied:
\begin{enumerate}[itemsep=0.02em]
	\item[M1.] (a) $M_G$ is a perfect $k$-matching of $G$ and $M_H$ is a non-empty $1$-matching of $H$. \\[0.01em]
							 (b) $M_G$ is a non-empty $k$-matching of $G$ and $M_H=\emptyset$. 

	\item[M2.] (a) $M_G$ is a non-empty $1$-matching of $G$ and $M_H$ is a perfect $k$-matching of $H$.\\[0.01em]
						 (b) $M_G=\emptyset$ and  $M_H$ is a non-empty $k$-matching of $H$.

	\item[M3.] $M_G$ is a  $1$-matching of $G$ and $M_H$ is a $1$-matching of $H$ in case $k=1$.
	\item[M4.] $M_G$ is a perfect $k_G$-matching of $G$ and $M_H$ is a perfect $k_H$-matching of $H$, where $k_G, k_H\geq 1$ and $k_G\cdot k_H=k$.
\end{enumerate}
\end{proposition}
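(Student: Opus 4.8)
The plan is to reduce the statement to a count of vertex degrees in the subgraph $F=(V,\fcast(M_G,M_H))$ of $G\star H=(V,E)$. Note first that $\fast(M_G,M_H)$ consists of non-Cartesian edges and $\MU_G,\MU_H$ of Cartesian edges, so $\fcast(M_G,M_H)\subseteq E_{G\star H}$ for both $\star\in\{\sprod,\lprod\}$ and nothing below depends on which of the two products we take. By Lemma~\ref{lem:ast-sets-disjoint} the pieces $\fast(M_G,M_H)$, $\MU_G$, $\MU_H$ are pairwise vertex disjoint, hence every vertex $(g,h)$ is incident to edges of at most one of them and $\deg_F(g,h)$ equals its degree inside that piece. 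Writing $d_G(g)$ for the degree of $g$ in $(V_G,M_G)$ and $d_H(h)$ for that of $h$ in $(V_H,M_H)$, inspection of the definitions of $\fast(M_G,M_H)$, $\MU_G$ and $\MU_H$ shows: $(g,h)$ meets a $\fast$-edge iff $d_G(g),d_H(h)\ge 1$, and then $\deg_F(g,h)=d_G(g)\,d_H(h)$ (as in Lemma~\ref{lem:degree-ast}); it meets a $\MU_G$-edge iff $d_G(g)=0$ and $d_H(h)\ge 1$, and then $\deg_F(g,h)=d_H(h)$; it meets a $\MU_H$-edge iff $d_H(h)=0$ and $d_G(g)\ge 1$, and then $\deg_F(g,h)=d_G(g)$; and $\deg_F(g,h)=0$ whenever $d_G(g)=d_H(h)=0$.

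For the \emph{if}-direction I would check ``$\deg_F(g,h)\in\{0,k\}$ for all $(g,h)$'' against this list. Under M4 one has $U_G=U_H=\emptyset$, so $\MU_G=\MU_H=\emptyset$ and $\fcast(M_G,M_H)=\fast(M_G,M_H)$, and the claim is exactly Proposition~\ref{prop:matching-ast}. Under M1(a) one has $U_G=\emptyset$, so $\MU_G=\emptyset$ and every vertex is either a $\fast$-vertex of degree $k\cdot 1=k$ or a $\MU_H$-vertex of degree $d_G(g)=k$; M2(a) is symmetric. Under M1(b) one has $M_H=\emptyset$, so $\fast(M_G,M_H)=\MU_G=\emptyset$ and $d_H(h)=0$ for every $h$, whence $\deg_F(g,h)=d_G(g)\in\{0,k\}$ because $M_G$ is a $k$-matching; M2(b) is symmetric. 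Under M3 ($k=1$) every nonzero value in the list equals $1$.

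For the \emph{only-if}-direction, assume $\fcast(M_G,M_H)$ is a $k$-matching. If $M_G=M_H=\emptyset$, then $\fcast(M_G,M_H)=\emptyset$ is a $1$-matching and hence falls under M3. If exactly one of $M_G,M_H$ is empty, say $M_H=\emptyset$, then $\fast(M_G,M_H)=\MU_G=\emptyset$ and $\fcast(M_G,M_H)=\MU_H$; since $d_H(h)=0$ for all $h$, the list gives $\deg_F(g,h)=d_G(g)$, so $d_G(g)\in\{0,k\}$ for every $g$, i.e.\ $M_G$ is a non-empty $k$-matching and we are in M1(b) (the case $M_G=\emptyset$ symmetrically gives M2(b)). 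So suppose $M_G,M_H$ are both non-empty and fix $g_0,h_0$ with $d_G(g_0),d_H(h_0)\ge 1$. Comparing $\deg_F(g,h_0)=d_G(g)\,d_H(h_0)=k$ over all $M_G$-matched vertices $g$ shows that all positive $M_G$-degrees share a common value $k_G\ge 1$; symmetrically all positive $M_H$-degrees equal some $k_H\ge 1$ and $k_Gk_H=k$. Hence $M_G$ is a $k_G$-matching and $M_H$ a $k_H$-matching. Now split on the emptiness of $U_G$ and $U_H$. If $U_G=U_H=\emptyset$ we obtain M4. If $U_G=\emptyset\neq U_H$, pick $h\in U_H$; then $k=\deg_F(g_0,h)=d_G(g_0)=k_G$, so $k_H=1$, $M_H$ is a non-empty $1$-matching and $M_G$ a perfect $k$-matching, i.e.\ M1(a); the case $U_H=\emptyset\neq U_G$ gives M2(a) symmetrically. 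If $U_G\neq\emptyset\neq U_H$, evaluating the degree at a $\MU_G$-vertex yields $k_H=k$ and at a $\MU_H$-vertex yields $k_G=k$, so $k=k_Gk_H=k^2$ and thus $k=1=k_G=k_H$, i.e.\ M3.

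I expect the main difficulty to be organizational rather than conceptual: one must keep track of the four independent ``emptiness'' alternatives ($M_G=\emptyset$?, $M_H=\emptyset$?, $U_G=\emptyset$?, $U_H=\emptyset$?) and match each surviving configuration to the correct one among the \emph{partly overlapping} conditions M1--M4 (for instance, a perfect $1$-matching of $G$ simultaneously satisfies M1(a), M3 and M4, so the only-if direction need only exhibit one of them). A second point requiring care is that Lemma~\ref{lem:degree-cast} presupposes that $M_G$ and $M_H$ are already $k_G$- resp.\ $k_H$-matchings, whereas in the only-if direction that regularity must first be \emph{derived} from the bare identity $d_G(g)\,d_H(h)=k$; accordingly the degree values at $\MU_G$- and $\MU_H$-vertices should be read off directly from Lemma~\ref{lem:ast-sets-disjoint} and the definitions of $\MU_G,\MU_H$ rather than quoted from Lemma~\ref{lem:degree-cast}.
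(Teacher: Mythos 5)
Your proposal is correct and follows essentially the same route as the paper: decompose $\fcast(M_G,M_H)$ into the pairwise vertex-disjoint pieces $\fast(M_G,M_H)$, $\MU_G$, $\MU_H$ and count vertex degrees in each, invoking Proposition~\ref{prop:matching-ast} (or re-deriving its regularity conclusion) for the non-empty $\fast$ case. The only difference is organizational — you split the final case analysis on the emptiness of $U_G$ and $U_H$ where the paper splits on the values of $k_G$ and $k_H$ — and both versions cover all configurations and land on the correct conditions among M1--M4.
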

\begin{proof}
	In what follows, let $G\star H=(V,E)$ and $F= (V,\fcast(M_G,M_H))$.

	For the \emph{if}-direction assume that $M_G$ and $M_H$ satisfy one of the conditions (M1)-(M4).
	Let us start with Case (M4). In this case, $\MU_G=\MU_H = \emptyset$ and therefore,
	$\fcast(M_G,M_H)  = \fast(M_G,M_H)$. Since neither $M_G$ nor $M_H$ is empty,  
	Prop.\ \ref{prop:matching-ast} implies that $\fcast(M_G,M_H)$ is a $k$-matching of $G \star H$.
	
	Now consider Case (M1). If $M_G$ is a perfect $k$-matching of $G$ we have $U_G=\emptyset$
	and, therefore, $\MU_G=\emptyset$ and if $M_H=\emptyset$, we trivially have $\MU_G=\emptyset$.
	Hence, for (M1.a) and (M1.b), we obtain $\fcast(M_G,M_H)  = \fast(M_G,M_H) \cup \MU_H$.
	By Prop.\ \ref{prop:matching-ast}, $\fast(M_G,M_H)$ is a
	$k$-matching of $G\star H$ for case (M1.a), since $M_G$ and $M_H$ are not empty. If, instead,
	case (M2.b) is satisfied, then $\fast(M_G,M_H)$ is an empty $k$-matching.
	Either way, every vertex $v\in V$ that is incident to
	an edge $e\in \MU_H$ must have degree $k$ in $F$ (cf.\ Lemma \ref{lem:degree-cast}).
	This together with $\fcast(M_G,M_H)  = \fast(M_G,M_H) \cup \MU_H$ implies that
	every vertex in $F$ has either degree $0$ or $k$.
	Thus, $\fcast(M_G,M_H)$ is a $k$-matching of $G\star H$.
	By similar arguments, $\fcast(M_G,M_H)$ is a $k$-matching of $G\star H$
	in Case (M2).
	Finally, in Case (M3), we can apply Lemma \ref{lem:degree-ast}, \ref{lem:ast-sets-disjoint} and \ref{lem:degree-cast}
	to conclude that every vertex in $F$ has either degree $0$ or $1$, which implies
	that $\fcast(M_G,M_H)$ is a (possibly empty) $1$-matching of $G\star H$.

	In summary, whenever $M_G$ and $M_H$ satisfy one of the conditions (M1)-(M4),
	then $\fcast(M_G,M_H)$ is a $k$-matching of $G\star H$.

	For the \emph{only-if}-direction assume that $\fcast(M_G,M_H)$ is a
	$k$-matching of $G \star H$. If $\fcast(M_G,M_H) = \emptyset$, then $M_G=M_H=\emptyset$ and thus,
	they are empty $1$-matching, i.e., (M3) is satisfied. Assume now that $\fcast(M_G,M_H) \neq \emptyset$.
	By Lemma \ref{lem:ast-sets-disjoint} the sets $\fast(M_G,M_H)$,
	$\MU_G$, $\MU_H$ are pairwise vertex disjoint and thus, each of them form  a
	$k$-matching of $G\star H$, independent from the other sets. In particular, $\fast(M_G,M_H)$ is a $k$-matching of
	$G\star H$. In what follows, we distinguish the two cases (a) $\fast(M_G,M_H)=\emptyset$
	and (b) $\fast(M_G,M_H)\neq \emptyset$.
	
	Suppose first Case (a), i.e.,  $\fast(M_G,M_H)$ is an empty $k$-matching. Then precisely one of $M_G$ and $M_H$ is empty,
	while the other is not, since $\fcast(M_G,M_H) \neq \emptyset$. First suppose that $M_G = \emptyset$ 
	and $M_H \neq \emptyset$. Then $\fcast(M_G,M_H)=\MU_G$ since both $\MU_H$ and $\fast(M_G,M_H)$ are empty. 
	Furthermore, every vertex of $G$ is trivially $M_G$-unmatched and hence, $\MU_G \cap E_{\lefts{g}{H}} \neq \emptyset$ for
	each vertex $g\in V_G$. By Lemma\ \ref{lem:basics-MB-MG}, the graph $(\{g\}\times V_H, \MU_G \cap E_{\lefts{g}{H}})$ is isomorphic
	to $(V_H, M_H)$. Since $\fcast(M_G,M_H)=\MU_G$ was assumed to be a $k$-matching and distinct $H$-layers are vertex disjoint, 
	$M_H$ must be a non-empty $k$-matching of $H$, i.e. (M2.b) is satisfied. Instead supposing that $M_G \neq \emptyset$ 
	and $M_H = \emptyset$ we obtain, by similar arguments, that (M1.b) is satisfied.
	
	We continue with Case (b), i.e.,  $\fast(M_G,M_H)$ is a non-empty $k$-matching. Thus, by 
	Prop.\ \ref{prop:matching-ast}, $M_G$ must be a non-empty $k_G$-matching of $G$
	and $M_H$ a non-empty $k_H$-matching of $H$, where $k_G, k_H\geq 1$ and $k_G\cdot k_H=k$.

	We consider now all possible cases for
	$k_G$ and $k_H$: (i) $k_G=k>1$ and $k_H=1$,
	(ii) $k_G=1$ and $k_H=k>1$,
	(iii) $k_G=1$ and $k_H=1$, and
	(iv) $k_G>1$ and $k_H>1$.

 	In Case (i), assume, for contradiction, that $M_G$ is not perfect and thus $U_G\neq \emptyset$.
	Hence, $\MU_G\neq \emptyset$. However, every
	vertex $v$ incident to an edge $e\in \MU_G$ has, by Lemma \ref{lem:degree-cast}, degree
	$1$ in $F$, which implies that $\fcast(M_G,M_H)$ is not a $k$-matching; a
	contradiction. Thus, $M_G$ must be a perfect $k$-matching of $G$. Hence, if Case
	(i) applies then (M1.a) must be satisfied.

	Similarly, Case (ii) implies (M2.a). Moreover, Case (iii) immediately implies
	(M3).

	Finally consider Case (iv). By assumption, $M_G$ is a non-empty $k_G$-matching of $G$ and $M_H$ is a
	non-empty $k_H$-matching of $H$. 
	Assume that one of these matchings, say $M_G$, is not
	perfect. Thus, $U_G\neq \emptyset$. As argued in Case (i), every vertex $v$ incident to an edge $e\in \MU_G$
	must have degree $k_H$ in $F$. Since $k_G\neq 1$, we have $k_H\neq k$. Hence,
	$\fcast(M_G, M_H)$ is not a $k$-matching; a contradiction. Thus, $M_G$ must be a
	perfect $k_G$-matching of $G$. By similar arguments, $M_H$ must be a perfect
	$k_H$-matching of $H$. Hence, Case (iv) implies (M4), which completes the proof.
\end{proof}

For later reference, we provide the following
\begin{lemma}\label{lem:fcast-gh-unmatch}
Let $\star\in\{\sprod,\lprod\}$, $G$ and $H$ be two graphs and
$M_G\subseteq E_G$ and $M_H\subseteq E_H$.
Then, for all $g\in V_G$ and $h\in V_H$,
the following statements are equivalent.
\begin{enumerate}
\item $g\in V_G$ is $M_G$-unmatched in $G$ and $h\in V_H$ is $M_H$-unmatched in $H$,
\item $(g,h)\in V_{G\star H}$ is $\fcast(M_G,M_H)$-unmatched in $G\star H$.
\end{enumerate} 
\end{lemma}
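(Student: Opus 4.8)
The plan is to prove the two implications $(1)\Rightarrow(2)$ and $(2)\Rightarrow(1)$ separately, relying on the structural description of $\fcast(M_G,M_H) = \fast(M_G,M_H)\cup\MU_G\cup\MU_H$ from Definition~\ref{def:cast}, together with the vertex-disjointness in Lemma~\ref{lem:ast-sets-disjoint}. The key bookkeeping observation, which I would state up front, is that a vertex $v=(g,h)$ is incident to an edge of $\fcast(M_G,M_H)$ if and only if it is incident to an edge in at least one of the three sets $\fast(M_G,M_H)$, $\MU_G$, $\MU_H$; and for each of these three sets the incidence condition is characterized purely by the matched/unmatched status of $g$ in $M_G$ and of $h$ in $M_H$.

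For $(1)\Rightarrow(2)$, assume $g$ is $M_G$-unmatched and $h$ is $M_H$-unmatched. An edge of $\fast(M_G,M_H)$ incident to $(g,h)$ would require $g$ to be incident to an edge of $M_G$ (by Definition~\ref{def:ast}), contradicting that $g$ is $M_G$-unmatched; so no such edge exists. An edge of $\MU_G$ incident to $(g,h)$ has the form $\{(g,h),(g,h')\}$ with $\{h,h'\}\in M_H$, contradicting that $h$ is $M_H$-unmatched; so no such edge exists. Symmetrically, an edge of $\MU_H$ incident to $(g,h)$ has the form $\{(g,h),(g',h)\}$ with $\{g,g'\}\in M_G$, again contradicting that $g$ is $M_G$-unmatched. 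Hence $(g,h)$ is incident to no edge of $\fcast(M_G,M_H)$, i.e.\ it is $\fcast(M_G,M_H)$-unmatched. (This half is essentially identical in spirit to the $(1)\Rightarrow(2)$ part of Lemma~\ref{lem:fbast-gh-unmatch}.)

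For $(2)\Rightarrow(1)$, I would argue by contraposition: suppose $g$ is $M_G$-matched or $h$ is $M_H$-matched, and show $(g,h)$ is $\fcast(M_G,M_H)$-matched. If both $g$ is $M_G$-matched and $h$ is $M_H$-matched, pick $\{g,g'\}\in M_G$ and $\{h,h'\}\in M_H$; then $\{(g,h),(g',h')\}\in\fast(M_G,M_H)\subseteq\fcast(M_G,M_H)$. If $g$ is $M_G$-matched but $h$ is $M_H$-unmatched, pick $\{g,g'\}\in M_G$; since $h\in U_H$, the edge $\{(g,h),(g',h)\}$ lies in $\MU_H\subseteq\fcast(M_G,M_H)$. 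Symmetrically, if $g$ is $M_G$-unmatched but $h$ is $M_H$-matched, pick $\{h,h'\}\in M_H$; since $g\in U_G$, the edge $\{(g,h),(g,h')\}$ lies in $\MU_G\subseteq\fcast(M_G,M_H)$. In each case $(g,h)$ is incident to an edge of $\fcast(M_G,M_H)$, completing the contrapositive. There is no real obstacle here — the statement is purely a matter of unwinding the three defining incidence conditions — so the only care needed is to make sure all three ``mixed'' sub-cases in the $(2)\Rightarrow(1)$ direction are covered, and to note that the relevant edges indeed exist in $G\star H$ for $\star\in\{\sprod,\lprod\}$, which holds because $\fast$-edges and $\MU$-edges are present in both the strong and lexicographic products.
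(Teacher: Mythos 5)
Your proposal is correct and follows essentially the same route as the paper: the $(1)\Rightarrow(2)$ direction checks each of the three constituent sets $\fast(M_G,M_H)$, $\MU_G$, $\MU_H$ exactly as the paper does, and your contrapositive case split for $(2)\Rightarrow(1)$ produces the same three witness edges that the paper obtains via nested contradictions. The only difference is presentational (contraposition versus direct proof by contradiction), so there is nothing substantive to add.
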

\begin{proof}
	Assume first that $g$ is an $M_G$-unmatched vertex in $G$ and $h$ is an $M_H$-unmatched vertex in $H$.
	For contradiction, suppose that there is an edge $e=\{(g,h),(g',h')\}$ in $\fcast(M_G, M_H)$. By definition, either
	$e\in\fast(M_G,M_H)$, $e\in\MU_G$ or $e\in\MU_H$. In each case, at least one of $\{g,g'\}\in M_G$ or 
	$\{h,h'\}\in M_H$ must hold, contradicting that both $g$ and $h$ are unmatched.
	Hence $(g,h)$ is $\fcast(M_G,M_H)$-unmatched in $G\star H$.
	
	For the converse, assume that $(g,h)$ is unmatched by $\fcast(M_G,M_H)$ in $G\star H$.
	If $g$ would be $M_G$-matched in $G$, then there is an edge $\{g,g'\}\in M_G$.
	Now either $h\in U_H$ or $h$ is $M_H$-matched in $H$ in which case there is an edge $\{h,h'\}\in M_H$.
	If $h\in U_H$, then $\{(g,h),(g',h)\}\in \MU_H\subseteq \fcast(M_G,M_H)$
	and if $\{h,h'\}\in M_H$, then $\{(g,h),(g',h')\}\in \fast(M_G,M_H)\subseteq \fcast(M_G,M_H)$.
	In both cases, $(g,h)$ is $\fcast(M_G,M_H)$-matched; a contradiction.
	Thus, $g$ must be $M_G$-unmatched and hence, $g\in U_G$. Assume now, for contradiction, that $h$
	is $M_H$-matched in $H$ in which case there is an edge $\{h,h'\}\in M_H$.
	Since $g\in U_G$, we have  $\{(g,h),(g,h')\}\in \MU_G\subseteq \fcast(M_G,M_H)$.
	But then $(g,h)$ is $\fcast(M_G,M_H)$-matched; again a contradiction.
	Therefore, for every $\fcast(M_G,M_H)$-unmatched vertex $(g,h)$ in $G\star H$
	the vertex $g$ is $M_G$-unmatched in $G$ and $h$ is $M_H$-unmatched in $H$.
\end{proof}


\section{Well-behaved constructed matchings in products}
\label{sec:wellB}

Assume now that we have some construction for
$k$-matchings $\fM(M_G,M_H)$ of $G\star H$ that is based on a combination of $k_G$-matchings $M_G$ of $G$ and $k_H$-matchings $M_H$ of $H$, 
$\mathpzc{x} \in \{\boxast,\ast,\circledast\}$.s
Clearly $\matchno_k(G\star H)\geq |\fM(M_G,M_H)|$. In what follows we investigate
in more detail, under which conditions such constructed matchings yield a
maximum $k$-matching in $G\star H$, i.e., $\matchno_k(G\star H) = |\fM(M_G,M_H)|$.
Such matchings will be called ``$\mathpzc{x}$-well-behaved''.


\subsection{$\boldsymbol{\boxast}$-well-behaved}

As a direct consequence of Prop.\ \ref{prop:matching-fac}, we obtain the following result.
\begin{corollary}
\label{cor:muLowerBound}
Let $\star\in\{\cprod,\sprod,\lprod\}$. For all graphs $G$ and $H$ with arbitrary
$k$-matching $M_G$ and $M_H$, respectively, it holds that \[ \matchno_k(G\star H)\geq
\max\{|\fbast(M_G,M_H)|, |\fbast(M_H,M_G)| \}\]
\end{corollary}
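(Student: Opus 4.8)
The plan is to invoke Corollary~\ref{cor:matching-fac} together with the trivial observation that any $k$-matching of $G\star H$ bounds $\matchno_k(G\star H)$ from below. First I would note that, by hypothesis, $M_G$ is a $k$-matching of $G$ and $M_H$ is a $k$-matching of $H$. Hence Corollary~\ref{cor:matching-fac} (equivalently, the relevant direction of Proposition~\ref{prop:matching-fac}, case~2) guarantees that \emph{both} $\fbast(M_G,M_H)$ and $\fbast(M_H,M_G)$ are $k$-matchings of $G\star H$ for every $\star\in\{\cprod,\sprod,\lprod\}$.

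Next, since $\matchno_k(G\star H)$ is by definition the size of a maximum $k$-matching of $G\star H$, any particular $k$-matching has size at most $\matchno_k(G\star H)$. Applying this to the two $k$-matchings just identified gives $\matchno_k(G\star H)\geq |\fbast(M_G,M_H)|$ and $\matchno_k(G\star H)\geq |\fbast(M_H,M_G)|$ simultaneously, and therefore $\matchno_k(G\star H)\geq \max\{|\fbast(M_G,M_H)|,|\fbast(M_H,M_G)|\}$, which is the claimed inequality.

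There is essentially no obstacle here: the statement is a direct corollary, and the only thing to be careful about is the quantifier — the conclusion must hold for \emph{both} orderings of the factors at once, which is exactly why Corollary~\ref{cor:matching-fac} (rather than just one direction of Proposition~\ref{prop:matching-fac}) is the right tool to cite, since it is phrased symmetrically in $M_G$ and $M_H$ under the assumption that both are $k$-matchings.
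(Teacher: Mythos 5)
Your proof is correct and matches the paper's intent exactly: the paper presents this as a direct consequence of Proposition~\ref{prop:matching-fac} (equivalently Corollary~\ref{cor:matching-fac}), which is precisely the route you take — both $\fbast(M_G,M_H)$ and $\fbast(M_H,M_G)$ are $k$-matchings of $G\star H$ when $M_G$ and $M_H$ are $k$-matchings, so each size is a lower bound for $\matchno_k(G\star H)$.
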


\begin{figure}[t]
	\begin{center}
		\includegraphics[width=0.45\textwidth]{./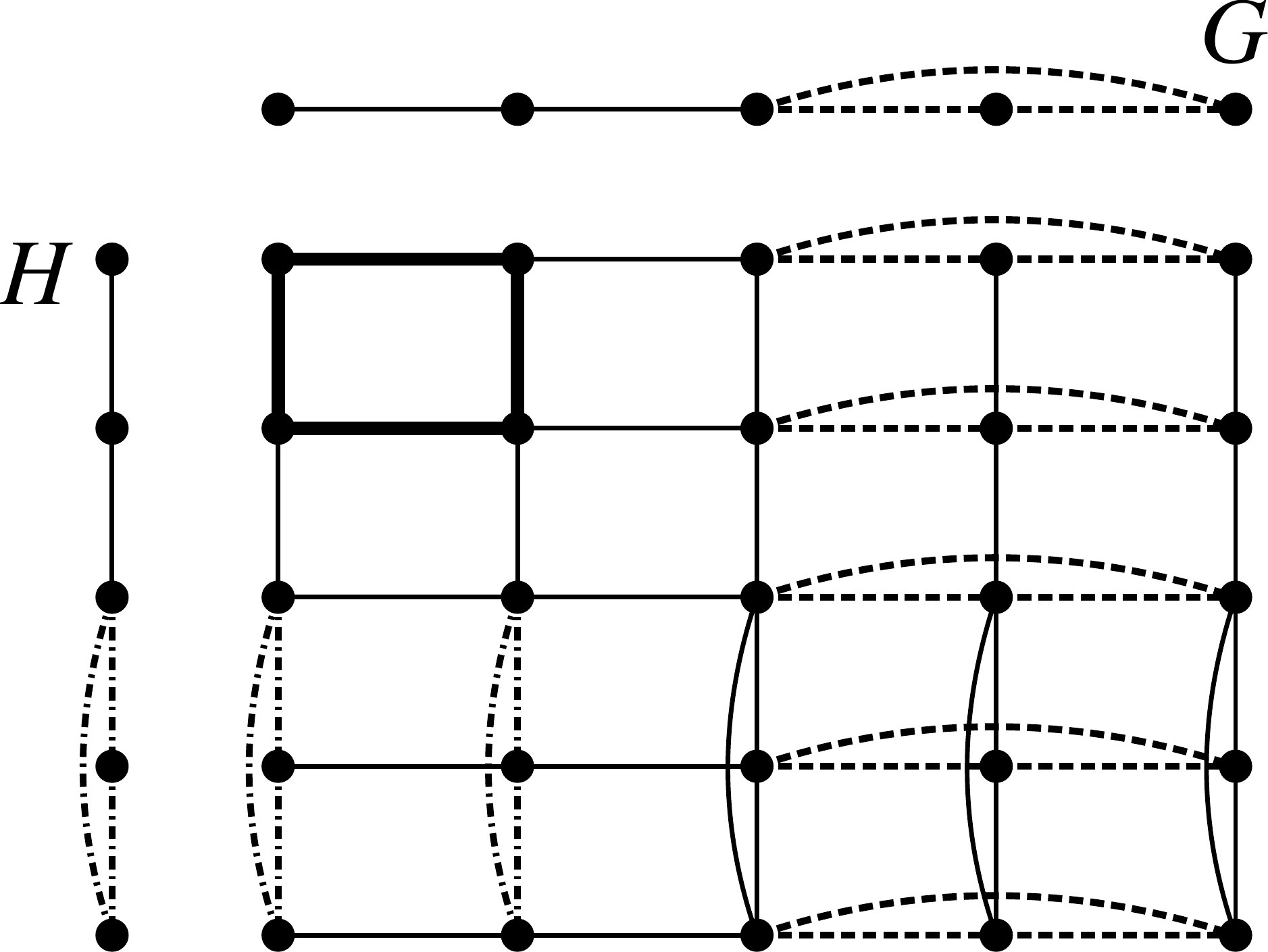}
	\end{center}
	\caption{Shown is the Cartesian product $G\Box H$ and $\fbast(M_G,M_H)$ for the
	         maximal $2$-matchings $M_G$ of $G$ and $M_H$ of $H$, highlighted by different line-styles.
	         $G\Box H$ has a perfect $2$-matching.
	         The subsets $\MB_G$ and $\MU_G$ of $\fbast(M_G,M_H)$  are highlighted by dashed and dashed-dotted lines within $G\cprod H$, respectively.
	         The $4$-cycle  (bold lines) is not part of $\fbast(M_G,M_H)$.
	         Thus, $\fbast(M_G,M_H)$ is a 2-matching   of $G\Box H$ which is not maximal and thus, not perfect.
	         }
		\label{fig:notMaximal}
\end{figure}

Cor.\ \ref{cor:muLowerBound} suggests the following definition.
\begin{definition}\label{def:wellb-Fbast}
Let $\star\in\{\cprod,\sprod,\lprod\}$. The $k$-matching number $\matchno_k(G\star
H)$ of $G\star H$ is \emph{$\boxast$-well-behaved w.r.t.\ $G$ and $H$} if
\[\matchno_k(G\star H) = \max\{|\fbast(M_G,M_H)|, |\fbast(M_H,M_G)|\}\] for some
$k$-matchings $|\fbast(M_G,M_H)|$ and $|\fbast(M_H,M_G)|$ with $M_G\subseteq E_G$ and
$M_H\subseteq E_H$. 
In this case, the $k$-matching $M\in \{\fbast(M_G,M_H),\fbast(M_H,M_G)\}$
of maximum size is also called \emph{$\boxast$-well-behaved}. 

\end{definition}

In other words, the $k$-matching number of $G\star H$ is $\boxast$-well-behaved
w.r.t.\ $G$ and $H$ if one of $\fbast(M_G,M_H)$ and $\fbast(M_H,M_G)$ is a maximum
$k$-matching in $G\star H$.

For the sake of simplicity in upcoming proofs, we assume in Def.\ \ref{def:wellb-Fbast}
that both $\fbast(M_G,M_H)$ and $\fbast(M_H,M_G)$ are $k$-matchings. In fact, this is 
no loss of generality. To see  this assume that $\fbast(M_G,M_H)$ is a $k$-matching
of $G\star H$. Then, $M_G$ is a $k$-matching of $G$
and $M_H$ is a $k$-matching of $H$ (cf.\ Prop. \ref{prop:matching-fac}(2)) or 
$M_G$ is a perfect $k$-matching of $G$ while $M_H$ could be any set (cf.\ Prop. \ref{prop:matching-fac}(1)). 
In case both $M_G$ and $M_H$ are $k$-matchings, 
Prop. \ref{prop:matching-fac}(2) implies that $\fbast(M_H,M_G)$ is a $k$-matching of 
$G\star H$. If $M_G$ is a perfect $k$-matching of $G$, then $M_H$ could be any set
and so we can choose $M_H=\emptyset$. It is easy to verify that, in this case, 
$\fbast(M_H,M_G) = \fbast(M_G,M_H)$ is a $k$-matching of $G\star H$ as well. 

\begin{remark}\label{rem:perfect-fbast}
If $M_G$ is a perfect $k$-matching of $G$ used to construct $\fbast(M_G,M_H)$
we always assume that $M_H=\emptyset$. The same applies if $M_H$ is a perfect 
$k$-matching of $H$ used to construct $\fbast(M_H,M_G)$.
\end{remark}

Furthermore, note that for all products
$\star\in\{\cprod,\sprod,\lprod\}$ the number $\matchno_k(G\star H)$ is
$\boxast$-well-behaved w.r.t.\ $G\star H$ and $K_1$, since $(G\star H)\star
K_1\simeq G\star H$ and thus, $\fbast(M_{G\star H},M_{K_1})  = M_{G\star H}$ yields a
maximum $k$-matching of $G\star H$ provided that $M_{G\star H}$ is a maximum $k$-matching of
$G\star H$.

However, not all $\matchno_k(G\star H)$ are
$\boxast$-well-behaved w.r.t.\ $G$ and $H$ as shown in the following
\begin{example}
Consider the graph $G\cprod
H$ as shown in Fig.\ \ref{fig:inducedMatchings}. In this example, we obtain
$\matchno_1(G') = |\fbast(M_{G'}, M_{K_1})|$ for $G'= G\cprod H$ and thus,
$\matchno_1(G')$ is $\boxast$-well-behaved w.r.t.\ $G'$ and $K_1$ but not w.r.t.\ $G$
and $H$ as shown by the other two $1$-matchings $\fbast(M_G,M_H)$ and $\fbast(M_H,M_G)$.
\end{example}

The latter example implies that the term ``$\boxast$-well-behaved'' heavily depends on
the choice of the factors. 

\begin{example}
Consider the product $G = S_3\Box K_3 \Box P_3$. Then, $\matchno_1(G)$ is $\boxast$-well-behaved
w.r.t.\ $S_3\Box K_3$ and $P_3$ but not w.r.t.\ $S_3$ and $K_3 \Box P_3$. To see this,
choose for $S_3\Box K_3$ the perfect $1$-matching as in Fig.\
\ref{fig:inducedMatchings}(right). Hence, $\MU_{S_3\Box K_3}=\emptyset$ and
$\fbast(M_{S_3\Box P_3}, M_{K_3}) = \MB_{S_3\Box K_3}$ yields a perfect $1$-matching in $G$, that is,
$\matchno_1(G) = |\fbast(M_{S_3\Box P_3}, M_{K_3})|$. In contrast, neither $S_3$ nor
$K_3 \Box P_3$ has a perfect $1$-matching (the latter since it has an odd number of vertices).
It is now easy to verify that
$\fbast(M_{S_3\Box P_3}, M_{K_3})$ yields a $1$-matching of $G$ but no perfect one, although one exists.
Therefore, $\matchno_1(G) > |\fbast(M_{K_3\Box P_3}, M_{S_3})|$ and, consequently,
$\matchno_1(G)$ is not $\boxast$-well-behaved w.r.t.\ $K_3 \Box P_3$ and $S_3$.
\end{example}

We now investigate the cardinalities $|\fbast(M_G,M_H)|$ and $|\fbast(M_H,M_G)|$ in
some more detail. Intriguingly, $\boxast$ is ``commutative'' for $k$-matchings $M_H$ and
$M_G$ in the sense that the cardinalities of the sets $\fbast(M_G,M_H)$ and $\fbast(M_H,M_G)$
are equal.

\begin{proposition}
\label{prop:maxexprequal}
Let $G$ and $H$ be graphs of order $n_G$ and $n_H$, respectively. For all
$k$-matchings $M_G$ of $G$ and $M_H$ of $H$, we have 
\begin{align*}
|\fbast(M_G,M_H)| &= |M_G| n_H+|M_H|u_G  \text{ and } \\
|\fbast(M_G,M_H)| &= |\fbast(M_H,M_G)| = \frac{k}{2} \left(n_Gn_H-u_Gu_H \right),
\end{align*}
where $u_G$ and $u_H$ denote the number of
$M_G$-unmatched and $M_H$-unmatched vertices in $G$ and $H$, respectively.
\end{proposition}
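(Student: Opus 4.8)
The plan is to compute $|\fbast(M_G,M_H)|$ directly from Definition~\ref{def:bast} and then convert the resulting expression into the symmetric closed form using Lemma~\ref{lem:muFormula}. First I would observe that by Lemma~\ref{lem:basics-MB-MG} the sets $\MB_G$ and $\MU_G$ are vertex disjoint, hence in particular edge disjoint, so $|\fbast(M_G,M_H)| = |\MB_G| + |\MU_G|$. For $\MB_G$: the map $h \mapsto \MB_G \cap E_{G^h}$ partitions $\MB_G$ along the $n_H$ copies $G^h$, and by Lemma~\ref{lem:basics-MB-MG} each such slice is isomorphic to $(V_G,M_G)$, so $|\MB_G \cap E_{G^h}| = |M_G|$ for every $h\in V_H$; summing over the $n_H$ layers gives $|\MB_G| = |M_G|\, n_H$. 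Likewise $\MU_G$ partitions along the $u_G$ copies $\lefts{g}{H}$ with $g\in U_G$, each slice isomorphic to $(V_H,M_H)$, giving $|\MU_G| = |M_H|\, u_G$. This yields the first identity $|\fbast(M_G,M_H)| = |M_G| n_H + |M_H| u_G$.

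Next I would apply Lemma~\ref{lem:muFormula} to each factor: since $M_G$ is a $k$-matching of a graph of order $n_G$ we have $|M_G| = k(n_G - u_G)/2$, and similarly $|M_H| = k(n_H - u_H)/2$. Substituting into the first identity,
\[
|\fbast(M_G,M_H)| = \frac{k}{2}(n_G-u_G)n_H + \frac{k}{2}(n_H-u_H)u_G = \frac{k}{2}\bigl(n_G n_H - u_G u_H\bigr),
\]
where the cross terms $-u_G n_H$ and $+n_H u_G$ cancel. The expression $\frac{k}{2}(n_G n_H - u_G u_H)$ is manifestly symmetric in the roles of $(G,M_G)$ and $(H,M_H)$; since exactly the same argument applied to $\fbast(M_H,M_G)$ (operating on $V_H \times V_G$, with the roles of the two factors interchanged) gives $|\fbast(M_H,M_G)| = \frac{k}{2}(n_H n_G - u_H u_G)$, we conclude $|\fbast(M_G,M_H)| = |\fbast(M_H,M_G)|$.

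The one point requiring a little care — and the closest thing to an obstacle — is the counting of $|\MB_G|$ and $|\MU_G|$: one must be sure that the edges in distinct layers really are distinct edges (no edge of $\MB_G$ lies in two different $G$-layers, and no edge of $\MU_G$ lies in two different $H$-layers), and that $\MB_G\cap E_{G^h}$ exhausts $\MB_G$ as $h$ ranges over $V_H$. Both are immediate from the form of the edges in Definition~\ref{def:bast}: an edge $\{(g,h),(g',h)\}\in\MB_G$ determines its layer $G^h$ uniquely via its common second coordinate, and an edge $\{(u,h),(u,h')\}\in\MU_G$ determines its layer $\lefts{u}{H}$ uniquely via its common first coordinate, which moreover lies in $U_G$; and distinct layers of the same type are vertex disjoint by the properties recorded in Section~\ref{sec:prelim}. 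With these observations the partition-and-count step is routine, and the algebraic cancellation finishes the proof.
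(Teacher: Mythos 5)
Your proof is correct and follows essentially the same route as the paper: both compute $|\fbast(M_G,M_H)| = |\MB_G| + |\MU_G| = |M_G|\,n_H + |M_H|\,u_G$ via Lemma~\ref{lem:basics-MB-MG}, then substitute $|M_G|=k(n_G-u_G)/2$ and $|M_H|=k(n_H-u_H)/2$ from Lemma~\ref{lem:muFormula} and cancel the cross terms. Your write-up merely spells out the layer-by-layer counting and disjointness checks that the paper leaves implicit.
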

\begin{proof}
By definition and Lemma\ \ref{lem:basics-MB-MG}, $|\fbast(M_G,M_H)| = |\MB_G|+ |\MU_G| = |M_G| n_H+|M_H|u_G$. By
Lemma~\ref{lem:muFormula}, $|M_G|=k(n_G-u_G)/2$ and $|M_H|=k(n_H-u_H)/2$. Thus, we
have
\begin{align*}
|\fbast(M_G,M_H)| =|M_G| n_H+|M_H|u_G&=\frac{k(n_G-u_G)}{2}\cdot
n_H+\frac{k(n_H-u_H)}{2}\cdot u_G\\ &= \frac{k}{2}
\left(n_Gn_H-u_Gn_H+n_Hu_G-u_Hu_G\right)\\ &= \frac{k}{2} \left(n_Gn_H-u_Hu_G\right).
\end{align*}
Similarly, one shows that $|\fbast(M_H,M_G)| = |M_H| n_G+|M_G|u_H $
satisfies $|\fbast(M_H,M_G)| = \frac{k}{2} \left(n_Gn_H-u_Hu_G\right)$ and thus,
$|\fbast(M_G,M_H)| = |\fbast(M_H,M_G)| $.
\end{proof}

Based on Proposition \ref{prop:maxexprequal} we obtain

\begin{proposition}\label{prop:cardi}
Let $\star\in\{\cprod,\sprod,\lprod\}$. If the $k$-matching number $\matchno_k(G\star
H)$ of $G\star H$ is $\boxast$-well-behaved w.r.t.\ $G$ and $H$, then
\[\matchno_k(G\star H) = |\fbast(M_G,M_H)| = |\fbast(M_H,M_G)|\]
for some $k$-matching $M_G$ and $M_H$ of $G$ and $H$, respectively.
In this case, $M_G$ and $M_H$ must be maximum $k$-matchings of $G$ and $H$, respectively.
\end{proposition}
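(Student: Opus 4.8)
The plan is to extract the cardinality identity from Proposition~\ref{prop:maxexprequal} and then argue that maximality of $\fbast(M_G,M_H)$ among all $\boxast$-constructed $k$-matchings forces $M_G$ and $M_H$ to be maximum in their respective factors. First I would invoke Definition~\ref{def:wellb-Fbast}: since $\matchno_k(G\star H)$ is $\boxast$-well-behaved w.r.t.\ $G$ and $H$, there are $k$-matchings $M_G$ of $G$ and $M_H$ of $H$ with $\matchno_k(G\star H) = \max\{|\fbast(M_G,M_H)|,|\fbast(M_H,M_G)|\}$. By Proposition~\ref{prop:maxexprequal}, the two cardinalities coincide, both equal to $\tfrac{k}{2}(n_Gn_H - u_Gu_H)$, which immediately gives the displayed equality $\matchno_k(G\star H) = |\fbast(M_G,M_H)| = |\fbast(M_H,M_G)|$.

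For the second claim, suppose toward a contradiction that $M_G$ is \emph{not} a maximum $k$-matching of $G$; then there is a $k$-matching $M_G'$ of $G$ with $|M_G'| > |M_G|$, equivalently (by Lemma~\ref{lem:muFormula}) with strictly fewer unmatched vertices, $u_G' < u_G$. Now consider the pair $(M_G', M_H)$. Since both $M_G'$ and $M_H$ are $k$-matchings, Proposition~\ref{prop:matching-fac}(2) (or Corollary~\ref{cor:matching-fac}) guarantees that $\fbast(M_G',M_H)$ is again a $k$-matching of $G\star H$, and Proposition~\ref{prop:maxexprequal} gives $|\fbast(M_G',M_H)| = \tfrac{k}{2}(n_Gn_H - u_G'u_H)$. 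Since $u_G' < u_G$ and $k, n_G, n_H \geq 1$ with $u_H \geq 0$, we have $\tfrac{k}{2}(n_Gn_H - u_G'u_H) \geq \tfrac{k}{2}(n_Gn_H - u_Gu_H)$, with strict inequality unless $u_H = 0$. This already shows $|\fbast(M_G',M_H)| \geq \matchno_k(G\star H)$, and in fact by Corollary~\ref{cor:muLowerBound} we always have $\matchno_k(G\star H) \geq |\fbast(M_G',M_H)|$, so equality holds throughout; the only way to avoid a contradiction with maximality of $M_G$ is $u_H = 0$, i.e.\ $M_H$ is a perfect $k$-matching of $H$. By the symmetric argument (swapping the roles of $G$ and $H$, using $|\fbast(M_H,M_G)| = \tfrac{k}{2}(n_Gn_H - u_Gu_H)$ and Remark~\ref{rem:perfect-fbast} to handle the perfect case), if $M_H$ is not maximum we are forced to conclude $u_G = 0$, i.e.\ $M_G$ is perfect. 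Combining the two: if $M_G$ is not maximum then $M_H$ is perfect (hence certainly maximum, as $\matchno_k(H) = k n_H/2$ by Lemma~\ref{lem:muFormula}); but then $\matchno_k(G\star H) = \tfrac{k}{2}n_Gn_H$, which is the largest conceivable value of $\tfrac{k}{2}(n_Gn_H - u_Gu_H)$, only attained when $u_G u_H = 0$, i.e.\ here when $u_G = 0$ since we could also have started from a larger $M_G$ — contradiction. A cleaner way to close: if $M_G$ is not maximum, replacing it by $M_G'$ strictly increases $|\fbast(M_G',M_H)|$ \emph{unless} $u_H=0$; but if $u_H = 0$, then symmetrically replace $M_H$ — impossible since $M_H$ is already perfect — so instead note $u_H=0$ forces $|\fbast(M_G,M_H)| = \tfrac k2 n_G n_H - 0 = \tfrac k2 n_G n_H$ is independent of $u_G$, contradicting that a larger $M_G'$ gave a strictly larger value; hence $M_G$ must be maximum. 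The same reasoning applied with the roles of $G$ and $H$ exchanged shows $M_H$ is maximum.

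The main obstacle is the bookkeeping around the perfect-matching corner case: when one factor has $u = 0$, the product $u_G u_H = 0$ regardless of the other factor, so the cardinality formula no longer "sees" whether the other matching is maximum, and one must argue separately. The clean resolution is to observe that $|\fbast(M_G,M_H)| = \tfrac{k}{2}(n_Gn_H - u_Gu_H)$ is monotone nondecreasing as either $u_G$ or $u_H$ decreases, and is maximized at $\tfrac{k}{2}n_Gn_H$ exactly when $u_G u_H = 0$; so well-behavedness forces the pair $(u_G,u_H)$ to minimize $u_G u_H$ over all pairs of $k$-matchings, and then a short case analysis (either some factor admits a perfect $k$-matching, in which case $\matchno_k(G\star H) = \tfrac k2 n_G n_H$ and Lemma~\ref{lem:muFormula} shows both chosen matchings are maximum by a parity/counting argument, or neither does, in which case minimizing the product $u_G u_H$ with $u_G, u_H \geq 1$ forces minimizing each of $u_G$ and $u_H$ individually, i.e.\ both are maximum) completes the argument. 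I expect the write-up to be a page at most, the only delicate point being to state the monotonicity/minimization argument carefully enough that the perfect case is genuinely covered rather than quietly assumed away.
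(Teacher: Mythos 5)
Your first paragraph is fine and matches the paper: Definition~\ref{def:wellb-Fbast} provides the witnessing pair $(M_G,M_H)$, Corollary~\ref{cor:matching-fac} (which you use only implicitly) upgrades them to $k$-matchings of the factors, and Proposition~\ref{prop:maxexprequal} gives $|\fbast(M_G,M_H)|=|\fbast(M_H,M_G)|=\tfrac{k}{2}(n_Gn_H-u_Gu_H)$. For the maximality claim you also follow the paper's strategy (an exchange argument: replace $M_G$ by a larger $k$-matching $M_G'$ and compare cardinalities), and you correctly notice something the paper's own proof passes over in silence: the inequality $\tfrac{k}{2}(n_Gn_H-u_G'u_H)>\tfrac{k}{2}(n_Gn_H-u_Gu_H)$ is strict only when $u_H>0$. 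The paper simply asserts the strict inequality, so in the corner case $u_H=0$ its argument is incomplete as well.

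The problem is that your attempts to close that corner case do not work, and cannot work as stated. Your ``combining the two'' step extracts a contradiction from the assertion that ``a larger $M_G'$ gave a strictly larger value,'' but when $u_H=0$ it gave \emph{exactly the same} value $\tfrac{k}{2}n_Gn_H$ --- that is the whole difficulty --- and the closing appeal to ``a parity/counting argument'' via Lemma~\ref{lem:muFormula} is unsubstantiated. No such argument exists for the universally quantified reading of the second claim: if $M_H$ is a perfect $k$-matching of $H$, then $\fbast(M_G,M_H)$ is a perfect, hence maximum, $k$-matching of $G\star H$ for \emph{every} $k$-matching $M_G$ of $G$, whether or not $M_G$ is maximum. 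Concretely, for $G=P_2$, $H=K_2$, $k=1$, $M_G=\emptyset$ and $M_H=E_H$, the set $\fbast(M_G,M_H)=\MU_G$ is a perfect $1$-matching of $G\cprod H$ although $M_G$ is not a maximum $1$-matching of $G$. The maximality claim is therefore only tenable in the existential sense in which the paper later uses it (in Theorems~\ref{thm:2ndChar} and~\ref{thm:wellB-maxM}): \emph{there exist} maximum $k$-matchings witnessing the displayed equality. That version needs no case analysis: replace the witnessing pair by maximum $k$-matchings $M_G^*$ and $M_H^*$; since $u_G^*\leq u_G$ and $u_H^*\leq u_H$, Proposition~\ref{prop:maxexprequal} gives $|\fbast(M_G^*,M_H^*)|=\tfrac{k}{2}(n_Gn_H-u_G^*u_H^*)\geq\tfrac{k}{2}(n_Gn_H-u_Gu_H)=\matchno_k(G\star H)$, while Corollary~\ref{cor:muLowerBound} gives the reverse inequality, so the maximum pair is itself a witness. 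I recommend you restate and prove the second claim in that form rather than trying to patch the exchange argument.
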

\begin{proof}
	By definition and Proposition \ref{prop:maxexprequal}, if $\matchno_k(G\star H)$ is
	$\boxast$-well-behaved w.r.t.\ $G$ and $H$, then $\matchno_k(G\star H) =
	|\fbast(M_G,M_H)| = |\fbast(M_H,M_G)|$ for some $k$-matchings
	$|\fbast(M_G,M_H)|$ and $|\fbast(M_H,M_G)|$
	with $M_G\subseteq E_G$ and $M_H\subseteq E_H$. Since both $\fbast(M_G,M_H)$
	and $\fbast(M_H,M_G)$ are assumed to be (possibly empty) $k$-matchings,
	Cor.\ \ref{cor:matching-fac} implies that $M_G$ is a $k$-matching of $G$
	and $M_H$ a $k$-matching of $H$.

	It remains to show that these $k$-matchings $M_G$ and $M_H$ are also maximum
	$k$-matchings of $G$ and $H$, respectively. Assume, for contradiction, that $M_G$
	is not a maximum $k$-matching of $G$ and let $M^*$ be a maximum $k$-matching of
	$G$. Thus, $|M^*|>|M_G|$ and, by Lemma \ref{lem:muFormula}, $u^*< u_G$ where $u^*$
	and $u_G$ denote the number of $M^*$- and $M_G$-unmatched vertices in $G$,
	respectively. Put $n=|V_G||V_H|$ and let $u_H$ be the number of $M_H$-unmatched
	vertices in $H$. By Prop.\ \ref{prop:matching-fac}, $|\fbast(M^*, M_{H})|$ is a
	$k$-matching of $G\star H$. By Proposition \ref{prop:maxexprequal},
	$|\fbast(M^*, M_{H})|= \frac{k}{2} \left(n-u^*u_H \right)> \frac{k}{2} \left(n-u_Gu_H \right) =
	|\fbast(M_G, M_{H})|$. Taken the latter two arguments together,
	$\matchno_k(G\star H)	\neq |\fbast(M_G,M_H)|$; a contradiction. Thus, $M_G$ must be a maximum
	$k$-matching of $G$. By similar arguments, $M_H$ is a maximum $k$-matching of $H$.
\end{proof}

We are now in the position to characterize $\boxast$-well-behaved $k$-matching numbers.

\begin{theorem}\label{thm:2ndChar}
	Let $\star\in\{\cprod,\sprod,\lprod\}$ and $G$ and $H$ be two graphs
	of order $n_G$ and $n_H$, respectively.
	The following statements are equivalent, where $u_G$ and $u_H$ denote
	the $M_G$- and $M_H$-unmatched vertices in $G$ and $H$, respectively,
	for $M_G\subseteq E_G$ and $M_H\subseteq E_H$.
	\begin{enumerate}[itemsep=0.02em]
		\item $\matchno_k(G\star H) $ is $\boxast$-well-behaved w.r.t.\ $G$ and $H$
		\item There is some  $k$-matching $M_G$ of $G$ and
					some $k$-matching $M_H$ of $H$ such that \\
					$\unmatchno_k(G\star H) = u_{G}u_{H}$
	\end{enumerate}
\end{theorem}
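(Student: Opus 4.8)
The plan is to prove the two implications separately, using Lemma \ref{lem:muFormula} to translate between matching sizes and numbers of unmatched vertices, and Proposition \ref{prop:maxexprequal} for the size of the $\fbast$-constructions. Write $n = n_G n_H$ for the order of $G \star H$ throughout. The starting observation is that, by Lemma \ref{lem:muFormula} applied to $G \star H$, we have $\matchno_k(G\star H) = \frac{k}{2}(n - \unmatchno_k(G\star H))$, while by Proposition \ref{prop:maxexprequal}, for any $k$-matchings $M_G$ of $G$ and $M_H$ of $H$ we have $|\fbast(M_G,M_H)| = |\fbast(M_H,M_G)| = \frac{k}{2}(n - u_G u_H)$. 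So both quantities have the form $\frac{k}{2}(n - \cdot)$, and comparing them reduces entirely to comparing $\unmatchno_k(G\star H)$ with $u_G u_H$.

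For the direction (1) $\Rightarrow$ (2): assume $\matchno_k(G\star H)$ is $\boxast$-well-behaved w.r.t.\ $G$ and $H$. Then there are $k$-matchings $M_G$ of $G$ and $M_H$ of $H$ (here I would invoke Proposition \ref{prop:cardi}, which also tells us $M_G, M_H$ are maximum, though only the fact that they are $k$-matchings is needed) with $\matchno_k(G\star H) = |\fbast(M_G,M_H)|$. Substituting the two formulas above, $\frac{k}{2}(n - \unmatchno_k(G\star H)) = \frac{k}{2}(n - u_G u_H)$, and cancelling gives $\unmatchno_k(G\star H) = u_G u_H$, which is exactly (2).

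For the direction (2) $\Rightarrow$ (1): suppose there are $k$-matchings $M_G$ of $G$, $M_H$ of $H$ with $\unmatchno_k(G\star H) = u_G u_H$. By Corollary \ref{cor:matching-fac}, $\fbast(M_G,M_H)$ and $\fbast(M_H,M_G)$ are both $k$-matchings of $G\star H$, and by Proposition \ref{prop:maxexprequal} they have common size $\frac{k}{2}(n - u_G u_H) = \frac{k}{2}(n - \unmatchno_k(G\star H)) = \matchno_k(G\star H)$, where the last equality is Lemma \ref{lem:muFormula}. Hence $\fbast(M_G,M_H)$ (equivalently $\fbast(M_H,M_G)$) is a maximum $k$-matching of $G\star H$, so $\matchno_k(G\star H) = \max\{|\fbast(M_G,M_H)|,|\fbast(M_H,M_G)|\}$ and the $k$-matching number is $\boxast$-well-behaved w.r.t.\ $G$ and $H$, establishing (1).

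The argument is essentially a bookkeeping exercise once the right prior results are in hand; I do not expect a genuine obstacle. The one point that needs a little care is making sure the $M_G, M_H$ furnished by well-behavedness really are $k$-matchings of the factors (so that Corollary \ref{cor:matching-fac} and Proposition \ref{prop:maxexprequal} apply) — this is handled by Proposition \ref{prop:cardi} together with the convention in Remark \ref{rem:perfect-fbast}, and in the other direction the hypothesis already names $M_G, M_H$ as $k$-matchings. It is also worth noting explicitly that the number $u_G u_H$ does not depend on \emph{which} maximum $k$-matchings are chosen in (2), since by Lemma \ref{lem:muFormula} $u_G = n_G - \frac{2}{k}|M_G|$ is determined by $|M_G|$ alone; but this is not strictly needed for the equivalence as stated.
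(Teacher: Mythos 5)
Your proof is correct and follows essentially the same route as the paper: Proposition \ref{prop:cardi} for the forward direction, then Proposition \ref{prop:maxexprequal} and Lemma \ref{lem:muFormula} to reduce everything to comparing $\unmatchno_k(G\star H)$ with $u_Gu_H$. If anything, your converse direction is slightly more careful than the paper's, since you explicitly invoke Corollary \ref{cor:matching-fac} to ensure that both $\fbast(M_G,M_H)$ and $\fbast(M_H,M_G)$ are $k$-matchings, as Definition \ref{def:wellb-Fbast} requires.
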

\begin{proof}
If $\matchno_k(G\star H) $ is $\boxast$-well-behaved w.r.t.\ $G$ and $H$, then there
are, by Prop.\ \ref{prop:cardi}, some maximum $k$-matchings $M_G$ and $M_H$ of $G$ and
$H$, resp, that satisfy $\matchno_k(G\star H) = |\fbast(M_G,M_H)|$. 
 By Prop.\ \ref{prop:maxexprequal}, $\matchno_k(G\star H) =
|\fbast(M_G,M_H)| = \frac k2 \left(n_Gn_H- u_Gu_H\right)$. By
Lemma~\ref{lem:muFormula}, $\matchno_k(G\star H)=\frac{k}{2} (n_{G\star
H}-\unmatchno_k(G\star H ))=\frac{k}{2} (n_G n_H-\unmatchno_k(G\star H))$, and thus,
$\unmatchno_k(G\star H) = u_{G}u_{H}$.

Now assume that $M_G$ and $M_H$ are $k$-matchings of $G$ and $H$, respectively, such
that $\unmatchno_k(G\star H) = u_{G}u_{H}$. This and Prop.\ \ref{prop:cardi} imply that
$\matchno_k(G\star H)=\frac{k}{2} (n_{G\star H}-\unmatchno_k(G\star H ))=\frac{k}{2}
(n_G n_H-\unmatchno_k(G\star H)) = \frac{k}{2} \left(n_Gn_H- u_Gu_H\right) =
|\fbast(M_G,M_H)|$. Thus, $\matchno_k(G\star H) $ is $\boxast$-well-behaved w.r.t.\
$G$ and $H$.
\end{proof}

As it turns out, $\boxast$-well-behaved $k$-matching numbers can be
characterized  entirely  in terms of the numbers $\matchno_k(G)$, $\matchno_k(H)$,
$\unmatchno_k(G)$, $\unmatchno_k(H)$ and the size of $G$ and $H$.

\begin{theorem}
\label{thm:wellB-maxM}
Let $\star\in\{\cprod,\sprod,\lprod\}$ and
let $G$ and $H$ be graphs of order $n_G$ and $n_H$, respectively.
Furthermore, let $\mathcal{M}_G$ and $\mathcal{M}_H$ denote the set
of all maximum $k$-matchings of $G$ and $H$, respectively.
The following statements are equivalent:
\begin{enumerate}[itemsep=0.02em]
	\item $\matchno_k(G\star H)$ is $\boxast$-well-behaved w.r.t.\ $G$ and $H$.
	\item $\fbast(M_G,M_H)$ is a maximum $k$-matching of $G\star H$
				for all $M_G\in \mathcal{M}_G$  and $M_H\in \mathcal{M}_H$.
	\item $\fbast(M'_H, M'_G)$ is a maximum $k$-matching in $G\star H$
				for all $M'_H\in \mathcal{M}_H$ and $M'_G\in \mathcal{M}_G$. 
	\item	$\matchno_k(G\star H) = \matchno_k(G) n_H+\matchno_k(H)\unmatchno_k(G)$.
  \item $\matchno_k(G\star H)  = \matchno_k(H) n_G+\matchno_k(G)\unmatchno_k(H)$.
	\item $\matchno_k(G\star H)= \frac{k}{2} \left(n_Gn_H-\unmatchno_k(G)\unmatchno_k(H) \right)$.
	\item $\unmatchno_k(G\star H)=\unmatchno_k(G)\unmatchno_k(H)$.
\end{enumerate}
\end{theorem}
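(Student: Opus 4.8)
The plan is to prove the chain of equivalences by exploiting the identities already established in Propositions \ref{prop:maxexprequal} and \ref{prop:cardi} together with the formula from Lemma \ref{lem:muFormula}, and to route as many implications as possible through the single statement of Theorem \ref{thm:2ndChar}. First I would observe that all seven statements are really translations of one another via the size/unmatched conversion $\matchno_k = \tfrac{k}{2}(n - \unmatchno_k)$: statements (4), (5), (6) are just different ways of writing the value $\tfrac{k}{2}(n_Gn_H - \unmatchno_k(G)\unmatchno_k(H))$, using $\matchno_k(G) = \tfrac{k}{2}(n_G - \unmatchno_k(G))$ and likewise for $H$, and the algebra here is exactly the computation carried out inside the proof of Proposition \ref{prop:maxexprequal} (the telescoping $n_Gn_H - u_Gn_H + n_Hu_G - u_Hu_G = n_Gn_H - u_Gu_H$); statement (7) is the same fact under one more application of Lemma \ref{lem:muFormula} applied to $G \star H$. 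So (4) $\Leftrightarrow$ (5) $\Leftrightarrow$ (6) $\Leftrightarrow$ (7) is pure bookkeeping and I would dispatch it first in one short paragraph.

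Next I would handle (1) $\Leftrightarrow$ (7). This is almost literally Theorem \ref{thm:2ndChar}: that theorem says (1) holds iff there exist $k$-matchings $M_G, M_H$ with $\unmatchno_k(G\star H) = u_Gu_H$. The only gap is that Theorem \ref{thm:2ndChar}(2) quantifies over \emph{some} $k$-matchings $M_G, M_H$ while the present statement (7) uses the invariants $\unmatchno_k(G), \unmatchno_k(H)$, i.e.\ the unmatched counts of \emph{maximum} $k$-matchings. But Proposition \ref{prop:cardi} already tells us that if $\matchno_k(G\star H)$ is $\boxast$-well-behaved then the witnessing $M_G, M_H$ must be maximum, hence $u_G = \unmatchno_k(G)$ and $u_H = \unmatchno_k(H)$; conversely, since $\tfrac{k}{2}(n_Gn_H - \unmatchno_k(G)\unmatchno_k(H))$ is by Corollary \ref{cor:muLowerBound} and Proposition \ref{prop:maxexprequal} an achievable lower bound realized by $\fbast(M_G,M_H)$ for maximum $M_G, M_H$, and it agrees with $\matchno_k(G\star H)$ precisely when (7) holds, equality (7) forces well-behavedness. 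I would also note that the general lower bound $\unmatchno_k(G\star H) \le \unmatchno_k(G)\unmatchno_k(H)$ holds unconditionally (from Corollary \ref{cor:muLowerBound} plus Proposition \ref{prop:maxexprequal} and Lemma \ref{lem:muFormula}), so (7) is really the assertion that this universal bound is tight.

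Then I would close the loop with (1) $\Leftrightarrow$ (2) and (1) $\Leftrightarrow$ (3). The direction (2) $\Rightarrow$ (1) is immediate from the definition of $\boxast$-well-behaved (if $\fbast(M_G,M_H)$ is a maximum $k$-matching for \emph{all} maximum $M_G, M_H$, in particular it is for one, which is exactly Definition \ref{def:wellb-Fbast}). For (1) $\Rightarrow$ (2): if $\matchno_k(G\star H)$ is $\boxast$-well-behaved, then by Proposition \ref{prop:cardi} there is a pair of maximum $k$-matchings achieving the bound, and by Proposition \ref{prop:maxexprequal} the size $|\fbast(M_G,M_H)| = \tfrac{k}{2}(n_Gn_H - u_Gu_H)$ depends on $M_G, M_H$ \emph{only through} $u_G, u_H$; since every maximum $k$-matching of $G$ has $u_G = \unmatchno_k(G)$ and every maximum $k$-matching of $H$ has $u_H = \unmatchno_k(H)$, the value $|\fbast(M_G,M_H)|$ is the same for all such choices, hence all of them achieve $\matchno_k(G\star H)$ and are maximum $k$-matchings of $G\star H$. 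The argument for (3) is identical since Proposition \ref{prop:maxexprequal} also gives $|\fbast(M_H,M_G)| = |\fbast(M_G,M_H)|$.

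The only mildly delicate point — and the step I expect to be the real obstacle, such as it is — is making sure the quantifiers match when passing between Definition \ref{def:wellb-Fbast} (``for some $M_G, M_H$'') and statement (2)/(3) (``for all maximum $M_G, M_H$''). This hinges entirely on the fact, made explicit above, that $|\fbast(M_G,M_H)|$ is a function of the unmatched counts alone (Proposition \ref{prop:maxexprequal}), so the ``some'' automatically upgrades to ``all'' once one restricts to maximum matchings; there is no combinatorial content beyond invoking Propositions \ref{prop:maxexprequal} and \ref{prop:cardi}. I would therefore organize the write-up as: (i) the algebraic equivalence of (4)--(7); (ii) (7) $\Leftrightarrow$ (1) via Theorem \ref{thm:2ndChar} and Proposition \ref{prop:cardi}; (iii) (1) $\Leftrightarrow$ (2) and (1) $\Leftrightarrow$ (3) via the invariance observation. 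No step requires new ideas, only careful citation of the already-proved propositions.
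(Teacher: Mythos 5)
Your proposal is correct and uses essentially the same ingredients and structure as the paper's proof: the invariance of $|\fbast(M_G,M_H)|=\frac{k}{2}(n_Gn_H-u_Gu_H)$ under the choice of maximum matchings (Prop.~\ref{prop:maxexprequal}), the forcing of maximality via Prop.~\ref{prop:cardi}, and the conversion between matching numbers and unmatched counts via Lemma~\ref{lem:muFormula}. The only difference is cosmetic reorganization (you dispatch the algebraic block (4)--(7) first and route (1)$\Leftrightarrow$(7) explicitly through Theorem~\ref{thm:2ndChar}, while the paper starts from (1)$\Leftrightarrow$(2)$\Leftrightarrow$(3)), which does not change the substance of the argument.
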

\begin{proof}

Assume first that $\matchno_k(G\star H)$ is $\boxast$-well-behaved w.r.t.\ $G$ and $H$. By
Prop.\ \ref{prop:cardi}, $\matchno_k(G\star H) = |\fbast(M_G,M_H)| =
|\fbast(M_{H} , M_{G})|$ for some $M_G\in \mathcal{M}_G$ and $M_H\in \mathcal{M}_H$.
 For all $M_G,
\,M'_G\in \mathcal{M}_G$ we have that $u_G=u'_G$, where $u_G$ and $u'_G$ denote the
number of $M_G$-unmatched and $M_G'$-unmatched vertices, respectively. Similarly,
$u_H=u'_H$ for all $M_H, M'_H\in \mathcal{M}_H$. This together with Prop.\
\ref{prop:maxexprequal} implies that $\matchno_k(G\star H) = |\fbast(M_G,M_H)| =
\frac{k}{2} \left(n_Gn_H-u_Gu_H \right) = \frac{k}{2} \left(n_Gn_H-u'_Gu'_H \right) =
|\fbast(M'_G , M'_H)|$. Therefore, $\fbast(M'_G , M'_H)$ is a maximum $k$-matching in
$G\star H$. Hence, Condition (1) implies (2) and, by similar arguments, (3).

Reusing the latter arguments together with Prop.\ \ref{prop:maxexprequal}
shows that  Condition (2) and (3) are equivalent since
$\fbast(M_G,M_H) $ and $\fbast(M'_H, M'_G)$ are of the same size
for all  $M_G,M'_G\in \mathcal{M}_G$ and all $M_H, M'_H\in \mathcal{M}_H$.
Moreover, if $\fbast(M_G,M_H)$ is a maximum $k$-matching of $G\star H$, then
$\matchno_k(G\star H)=|\fbast(M_G,M_H)|$ and thus, (2) implies (1).
Therefore, Condition (1), (2) and (3) are equivalent.

Note, for all $M_G\in \mathcal{M}_G$  and $M_H\in \mathcal{M}_H$ we have $|M_G| =  \matchno_k(G)$
and $|M_H| =  \matchno_k(H)$ and thus, by Prop.\ \ref{prop:maxexprequal},
\begin{align*}\label{eq:boxast-eq}
|\fbast(M_G,M_H)| &= \matchno_k(G) n_H+\matchno_k(H)\unmatchno_k(G) \tag{a} \\
									&  = \matchno_k(H) n_G+\matchno_k(G)\unmatchno_k(H)=|\fbast(M_H, M_G)|.
\end{align*}
Taken this together with Prop.\ \ref{prop:cardi} and the equivalence of Condition (1), (2) and (3),
Condition (1) implies Condition (4) and (5).
Condition (4) and (5) taken together with Eq.\ \eqref{eq:boxast-eq} imply Condition (2) and (3), respectively.
Moreover, Eq.\ \eqref{eq:boxast-eq} together
with Prop.\ \ref{prop:maxexprequal}, imply that Condition (4), (5) and (6) are equivalent.
Therefore, Condition (1) - (6) are equivalent.

We finally show that Condition (6) and (7) are equivalent.
To this end, observe first that Lemma~\ref{lem:muFormula} implies that
$\matchno_k(G\star H)=\frac{k}{2} (n_{G\star H}-\unmatchno_k(G\star H ))=\frac{k}{2} (n_G n_H-\unmatchno_k(G\star H)) $
is always satisfied.
Hence, Condition (6) $\matchno_k(G\star H)= \frac k2 \left(n_Gn_H-\unmatchno_k(H)\unmatchno_k(G) \right)$
is satisfied if and only $\unmatchno_k(G\star H)=\unmatchno_k(G)\unmatchno_k(H)$.
\end{proof}

\begin{figure}[t]
	\begin{center}
		\includegraphics[width=.9\textwidth]{./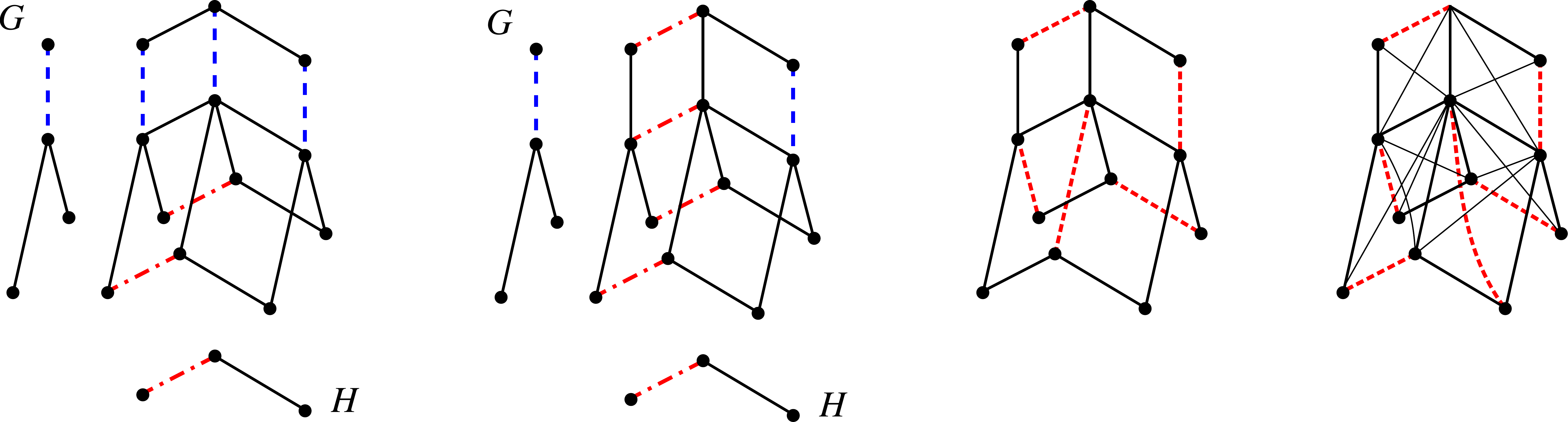}
	\end{center}
	\caption{Shown is the Cartesian product $G\cprod H$ (1.-3. from left)
					 and the strong product $G\sprod H$ (right).
					 Moreover, the sets $\fcast(M_G,M_H)$ (left)
						and $\fcast(M_H,M_G)$ (2nd from left) for the
	     		 maximum $1$-matchings $M_G$ of $G$ and $M_H$ of $H$
	     		 (highlighted by dashed lines) are provided. 
	     		 A further $1$-matching of  $G\Box H$ is shown 3rd from left.
	     		 All $1$-matchings are maximum $1$-matchings of $G\Box H$. In particular, for $k=1$,
				 		 $\matchno_k(G\cprod H)$ is $\boxast$-well-behaved w.r.t.\ $G$ and $H$,
				 		since $\matchno_k(G\cprod H) = |\fbast(M_G,M_H)| = |\fbast(M_H,M_G)| =5$ but also because
						$\unmatchno_k(G\cprod H) = u_{G}u_{H}$ with $u_G$ and $u_H$ being
						the $M_G$- and $M_H$-unmatched vertices in $G$ and $H$, respectively (cf.\ Thm.\ \ref{thm:2ndChar}).
						However, $\matchno_k(G\sprod H) = 6$. Since  
						$2= u_{G}u_{H}\neq 0 = \unmatchno_k(G\sprod H)$, Thm.\ \ref{thm:2ndChar} implies that 
					  $\matchno_k(G\sprod H)$	is not $\boxast$-well-behaved w.r.t.\ $G$ and $H$. 
						}
		\label{fig:exmpl2-new}
\end{figure}

Note that $\boxast$-well-behavedness heavily depends on the chosen products as shown in the following result
   and the subsequent example.

\begin{proposition} \label{prop:wb-implications}
Let $G$ and $H$ be two graph. 
If $\matchno_k(G \lprod H)$ is  $\boxast$-well-behaved w.r.t.\ $G$ and $H$,
then $\matchno_k(G \sprod H)$ is  $\boxast$-well-behaved w.r.t.\ $G$ and $H$,
which implies that $\matchno_k(G \cprod H)$ is  $\boxast$-well-behaved w.r.t.\ $G$ and $H$. 
\end{proposition}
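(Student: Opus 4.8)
The plan is to prove the two implications separately, in each case by relating the relevant quantities via the chain of subgraph inclusions $G\cprod H \subseteq G\sprod H \subseteq G\lprod H$ and the characterization of $\boxast$-well-behavedness furnished by Theorem~\ref{thm:wellB-maxM}(6), which states that $\matchno_k(G\star H)$ is $\boxast$-well-behaved w.r.t.\ $G$ and $H$ if and only if $\matchno_k(G\star H) = \frac{k}{2}\left(n_Gn_H-\unmatchno_k(G)\unmatchno_k(H)\right)$. The crucial observation is that the right-hand side of this equation does \emph{not} depend on which product $\star$ we choose; only the left-hand side $\matchno_k(G\star H)$ does. Moreover, since $\cprod$, $\sprod$ and $\lprod$ share the same vertex set and $G\cprod H \subseteq G\sprod H \subseteq G\lprod H$, every $k$-matching of $G\cprod H$ is also a $k$-matching of $G\sprod H$ and every $k$-matching of $G\sprod H$ is also a $k$-matching of $G\lprod H$ (a $k$-matching is just an edge set inducing a subgraph with all degrees in $\{0,k\}$, a condition that is preserved when we pass to a supergraph on the same vertices, because the induced subgraph $(V,M)$ is unchanged). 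Consequently $\matchno_k(G\cprod H) \leq \matchno_k(G\sprod H) \leq \matchno_k(G\lprod H)$.

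First I would prove that $\boxast$-well-behavedness of $G\sprod H$ implies that of $G\cprod H$. Assume $\matchno_k(G\sprod H)$ is $\boxast$-well-behaved w.r.t.\ $G$ and $H$. By Cor.~\ref{cor:muLowerBound}, for any $k$-matchings $M_G$ of $G$ and $M_H$ of $H$ we have $\matchno_k(G\cprod H) \geq |\fbast(M_G,M_H)|$ (the sets $\fbast(M_G,M_H)$ consist of Cartesian edges only and hence lie in $E_{G\cprod H}$). Choosing $M_G$ and $M_H$ to be maximum $k$-matchings, Prop.~\ref{prop:maxexprequal} gives $|\fbast(M_G,M_H)| = \frac{k}{2}\left(n_Gn_H - \unmatchno_k(G)\unmatchno_k(H)\right)$, which by the assumed well-behavedness of $G\sprod H$ and Theorem~\ref{thm:wellB-maxM}(6) equals $\matchno_k(G\sprod H)$. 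Combining, $\matchno_k(G\cprod H) \geq \matchno_k(G\sprod H)$. On the other hand $\matchno_k(G\cprod H) \leq \matchno_k(G\sprod H)$ from the inclusion above, so equality holds and $\matchno_k(G\cprod H) = \frac{k}{2}\left(n_Gn_H - \unmatchno_k(G)\unmatchno_k(H)\right)$; by Theorem~\ref{thm:wellB-maxM}(6) again, $\matchno_k(G\cprod H)$ is $\boxast$-well-behaved w.r.t.\ $G$ and $H$.

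The implication from $G\lprod H$ to $G\sprod H$ follows by exactly the same argument with $\cprod$ replaced by $\sprod$ and $\sprod$ replaced by $\lprod$ throughout: the sets $\fbast(M_G,M_H)$ lie in $E_{G\sprod H}$ as well, Cor.~\ref{cor:muLowerBound} applies to $\star=\sprod$, and $\matchno_k(G\sprod H)\leq \matchno_k(G\lprod H)$ from the inclusion $G\sprod H\subseteq G\lprod H$. Chaining the two implications then yields the full statement.

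I do not expect a serious obstacle here; the argument is essentially a two-line monotonicity sandwich once Theorem~\ref{thm:wellB-maxM}(6) is invoked. The one point that deserves care is justifying that a $k$-matching survives when passing to a supergraph on the same vertex set and that, simultaneously, $\unmatchno_k(G)$, $\unmatchno_k(H)$, $n_G$ and $n_H$ are product-independent quantities attached to the \emph{factors}, so the target value $\frac{k}{2}(n_Gn_H-\unmatchno_k(G)\unmatchno_k(H))$ is the same number in all three cases. Once that is spelled out, the inequalities $\matchno_k(G\cprod H)\leq\matchno_k(G\sprod H)\leq\matchno_k(G\lprod H)$ together with the lower bound from Cor.~\ref{cor:muLowerBound} pin $\matchno_k$ of the smaller product to this common value, giving well-behavedness by Theorem~\ref{thm:wellB-maxM}.
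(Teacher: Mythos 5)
Your proof is correct and rests on the same key mechanism as the paper's: the inclusion chain $G\cprod H\subseteq G\sprod H\subseteq G\lprod H$ gives $\matchno_k(G\cprod H)\leq\matchno_k(G\sprod H)\leq\matchno_k(G\lprod H)$, while the fact that $\fbast(M_G,M_H)$ consists of Cartesian edges only supplies the matching lower bound that forces equality. The only cosmetic difference is that you route the lower bound through Theorem~\ref{thm:wellB-maxM}(6) and maximum $k$-matchings of the factors, whereas the paper simply reuses the witnessing $k$-matching $\fbast(M_G,M_H)$ from the hypothesis directly; both packagings are sound.
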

\begin{proof}
Assume that $\matchno_k(G \lprod H)$ is  $\boxast$-well-behaved w.r.t.\ $G$ and $H$
and thus,  $\matchno_k(G \lprod H) = \fbast(M_G,M_H)$ for some $k$-matching 
$\fbast(M_G,M_H)$ of $G \lprod H$. Since $\fbast(M_G,M_H)$ consists of Cartesian
edges only, we have $\fbast(M_G,M_H)\subseteq E_{G\sprod H}$ and $\fbast(M_G,M_H)\subseteq E_{G\cprod H}$. 
This together with  $G \cprod H\subseteq G \sprod H \subseteq G \lprod H$ implies that
$|\fbast(M_G,M_H)|\leq \matchno_k(G \cprod H)\leq \matchno_k(G \sprod H) \leq \matchno_k(G \lprod H)$
and thus, we obtain equality in the latter equation. 
By similar, arguments one shows  $\boxast$-well-behavedness of  $\matchno_k(G \sprod H)$
implies that $\matchno_k(G \cprod H)$ is  $\boxast$-well-behaved w.r.t.\ $G$ and $H$. 
\end{proof}

The converse of Prop.\ \ref{prop:wb-implications} is not satisfied in general. 
		Fig.\ \ref{fig:exmpl2-new} provides an example for which $\matchno_k(G \cprod H)$ is  
		$\boxast$-well-behaved w.r.t.\ $G$ and $H$ but not $\matchno_k(G \sprod H)$.

The following two results are a mild generalization of results
that have been established for for the Cartesian and strong product and for $1$-matchings
(cf.\ e.g.\ \cite[Lemma 7 and Prop.\ 17]{AlmeidaEtAl2013} or \cite[Thm.\ 4]{Kotzig1979}).

\begin{corollary}\label{cor:perfect-k}
	Let $\star\in\{\cprod,\sprod,\lprod\}$.
	If $G$ has a perfect $k$-matching, then $G \star H$ has a perfect $k$-matching for all graphs $H$.
\end{corollary}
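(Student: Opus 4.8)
The plan is to reduce the claim directly to the characterization obtained in Theorem~\ref{thm:wellB-maxM}. First I would observe that if $G$ has a perfect $k$-matching $M_G$, then $\unmatchno_k(G) = 0$, and Lemma~\ref{lem:muFormula} gives $\matchno_k(G) = k\cdot n_G/2$. Take $M_H = \emptyset$ as the trivial $k$-matching of $H$, which has $u_H = n_H$ unmatched vertices and $\matchno$-type contribution $|M_H| = 0$. The key point is then to compute $\unmatchno_k(G)\cdot\unmatchno_k(H) = 0$, so the right-hand side of condition~(7) of Theorem~\ref{thm:wellB-maxM} vanishes, and condition~(6) simply asserts $\matchno_k(G\star H) = \frac{k}{2} n_G n_H$, i.e.\ that $G\star H$ has a perfect $k$-matching by the last part of Lemma~\ref{lem:muFormula}.

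So the argument I would actually write goes the other way: starting from a perfect $k$-matching $M_G$ of $G$, Proposition~\ref{prop:matching-fac}(1) (with $M_H = \emptyset$, as in Remark~\ref{rem:perfect-fbast}) tells us that $\fbast(M_G, M_H) = \MB_G$ is a $k$-matching of $G\star H$. By Lemma~\ref{lem:fbast-gh-unmatch}, a vertex $(g,h)$ is $\fbast(M_G,M_H)$-unmatched if and only if $g$ is $M_G$-unmatched in $G$ and $h$ is $M_H$-unmatched in $H$; since no vertex of $G$ is $M_G$-unmatched, no vertex of $G\star H$ is unmatched by $\fbast(M_G,M_H)$. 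Hence $\fbast(M_G, M_H)$ is already a \emph{perfect} $k$-matching of $G\star H$, which is exactly the claim. (Alternatively, one can note $|\fbast(M_G,M_H)| = |M_G|\cdot n_H = \frac{k}{2} n_G n_H$ via Proposition~\ref{prop:maxexprequal} and invoke Lemma~\ref{lem:muFormula}.)

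There is essentially no obstacle here — the corollary is a one-line consequence once Proposition~\ref{prop:matching-fac} and Lemma~\ref{lem:fbast-gh-unmatch} are in place. The only thing to be slightly careful about is the bookkeeping convention of Remark~\ref{rem:perfect-fbast}: when $M_G$ is perfect we must pick $M_H = \emptyset$ so that $\MU_G = \emptyset$ and $\fbast(M_G,M_H)$ collapses to $\MB_G$, the union of disjoint copies of $M_G$ over the $G$-layers; each copy $G^h$ then carries a perfect $k$-matching and the $G$-layers partition $V_{G\star H}$, so the union is spanning and $k$-regular. This works uniformly for $\star\in\{\cprod,\sprod,\lprod\}$ because $\MB_G$ consists of Cartesian edges only, which are present in all three products.
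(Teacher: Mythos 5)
Your proposal is correct and follows essentially the same route as the paper: both take the perfect $k$-matching $M_G$ of $G$, form $\fbast(M_G,M_H)=\MB_G$ (the paper allows $M_H$ to be any $k$-matching of $H$, e.g.\ $M_H=\emptyset$), and verify that the result is a perfect $k$-matching of $G\star H$. The only cosmetic difference is in the verification step: the paper computes $|\fbast(M_G,M_H)|=\matchno_k(G)\,n_H=\tfrac{k}{2}n_Gn_H$ via Prop.~\ref{prop:maxexprequal} and Lemma~\ref{lem:muFormula} (exactly your parenthetical alternative), whereas your primary argument checks via Lemma~\ref{lem:fbast-gh-unmatch} that no vertex is left unmatched; both are one-line and equivalent, so your preliminary sketch via Theorem~\ref{thm:wellB-maxM} (which, as you noticed, would be the wrong direction) is harmless.
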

\begin{proof}
Let  $n_G$ and $n_H$ be the order of $G$ and $H$, respectively.
Let $M_H$ be any $k$-matching of $H$.
If $G$ has a perfect $k$-matching, then $\matchno_k(G) = k\frac{n_G}{2}$ and
$\unmatchno_k(G)=0$.
This together with Prop.\ \ref{prop:maxexprequal} implies that
$|\fbast(M_G,M_H)| =  \matchno_k(G) n_H = k\frac{n_G n_H}{2} = k \frac{|V_{G\star H}|}{2}$,
the size of a perfect $k$-matching in $G\star H$, for all graphs $H$.
\end{proof}

\begin{corollary}
	Let $\star\in\{\cprod,\sprod,\lprod\}$.
	If $G$ and $H$ have near-perfect $k$-matchings, then $G \star H$ has a near-perfect $k$-matching.
\end{corollary}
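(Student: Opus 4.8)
The plan is to use the $\fbast$-construction applied to the given near-perfect $k$-matchings of the factors. Since $G$ has a near-perfect $k$-matching, there is a $k$-matching $M_G$ of $G$ with exactly one $M_G$-unmatched vertex, so in the notation of Prop.\ \ref{prop:maxexprequal} we have $u_G = 1$. Likewise, fix a $k$-matching $M_H$ of $H$ with exactly one $M_H$-unmatched vertex, so $u_H = 1$. Because both $M_G$ and $M_H$ are $k$-matchings of $G$ and $H$, Prop.\ \ref{prop:matching-fac}(2) guarantees that $\fbast(M_G,M_H)$ is a $k$-matching of $G\star H$ for every $\star\in\{\cprod,\sprod,\lprod\}$.

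It then remains only to count the $\fbast(M_G,M_H)$-unmatched vertices of $G\star H$. The quickest route is Lemma \ref{lem:fbast-gh-unmatch}: a vertex $(g,h)\in V_{G\star H}$ is $\fbast(M_G,M_H)$-unmatched if and only if $g$ is $M_G$-unmatched in $G$ and $h$ is $M_H$-unmatched in $H$. There is precisely one such $g$ and precisely one such $h$, hence precisely one $\fbast(M_G,M_H)$-unmatched vertex, namely the pair consisting of the two unmatched vertices of the factors. Thus $\fbast(M_G,M_H)$ is a near-perfect $k$-matching of $G\star H$.

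Alternatively, one may bypass Lemma \ref{lem:fbast-gh-unmatch} entirely and argue by cardinalities: Prop.\ \ref{prop:maxexprequal} gives $|\fbast(M_G,M_H)| = \frac{k}{2}(n_G n_H - u_G u_H) = \frac{k}{2}(n_G n_H - 1)$, and then Lemma \ref{lem:muFormula}, with $n = n_G n_H = |V_{G\star H}|$, forces the number of $\fbast(M_G,M_H)$-unmatched vertices to equal $u_G u_H = 1$. Either way the argument is immediate; there is essentially no obstacle. The only point to keep in mind is that ``near-perfect'' demands \emph{exactly} one unmatched vertex, which is exactly what the product $u_G\cdot u_H = 1\cdot 1$ delivers, so no degenerate case (such as an accidental perfect matching) needs separate treatment.
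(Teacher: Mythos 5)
Your proposal is correct and follows essentially the same route as the paper: both apply the $\fbast$-construction to the two near-perfect $k$-matchings and verify via Prop.~\ref{prop:maxexprequal} (your ``alternative'' cardinality computation is exactly the paper's argument) that the result has size $k\frac{n_Gn_H-1}{2}$. Your primary route via Lemma~\ref{lem:fbast-gh-unmatch}, identifying the unique unmatched vertex directly, is a slightly more explicit variant of the same idea, and your explicit appeal to Prop.~\ref{prop:matching-fac}(2) to confirm that $\fbast(M_G,M_H)$ is indeed a $k$-matching is a small point the paper leaves implicit.
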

\begin{proof}
Let  $n_G$ and $n_H$ be the order,
and $M_G$ and $M_H$ be near-perfect $k$-matchings, of $G$ and $H$, respectively.
Hence, $u_G = u_H =1$, with $u_G$ and $u_H$ being $M_G$- and $M_H$-unmatched vertices, respectively.
By Lemma \ref{lem:muFormula}, $|M_G| =k\cdot \frac{n_G-1}{2}$
and $|M_H| =k\cdot \frac{n_H-1}{2}$.
Prop.\ \ref{prop:maxexprequal} implies that $|\fbast(M_G,M_H)| =  |M_G|n_H + |M_H|u_G$.
Taken the latter arguments together, we obtain
$|\fbast(M_G,M_H)| = k \frac{(n_G-1)n_H}{2} + k \frac{n_H-1}{2} = k  \frac{n_Gn_H-1}{2}
= k\frac{|V_{G\star H}|-1}{2}$,
the size of a  near-perfect $k$-matching in $G\star H$.
\end{proof}


\subsection{$\boldsymbol{\circledast}$-well-behaved}

In what follows we will make frequent use of the condition (M1), (M2), (M3) and (M4) as specified in Proposition\ \ref{prop:matching-cast}.
We start with specifying the cardinality $|\fcast(M_G,M_H)|$.

\begin{proposition}\label{prop:size-cast}
 Let $\star\in \{\sprod,\lprod\}$ and
  $G$ and $H$ be two graphs of size $n_G$ and $n_H$, respectively. Moreover, let
 $M_G\subseteq E_G$ and $M_H\subseteq E_H$ and denote the number of $M_G$-unmatched vertices in $G$
 and $M_H$-unmatched vertices in $H$ by $u_G$ and $u_H$, respectively.
			If 	$\fcast(M_G,M_H)$  is a $k$-matching of $G\star H$, then
			 \[|\fcast(M_G,M_H)| = \frac{k}{2} \left(n_Gn_H-u_Gu_H \right). \]
Moreover, if $M_G$ and $M_H$ satisfy one of the Cases (M1.a), (M2.a) and (M4),
then $\fcast(M_G,M_H)$  is a perfect $k$-matching of $G\star H$ and
			 \[|\fcast(M_G,M_H)| = \frac{k}{2}n_Gn_H.\]
\end{proposition}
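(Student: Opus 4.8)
The first claim is a cardinality count of $\fcast(M_G,M_H) = \fast(M_G,M_H) \cup \MU_G \cup \MU_H$, and the plan is to add the sizes of the three pieces, which are pairwise vertex disjoint by Lemma~\ref{lem:ast-sets-disjoint} (so in particular edge disjoint). By Lemma~\ref{lem:fcast-gh-unmatch}, a vertex $(g,h)$ is $\fcast(M_G,M_H)$-unmatched precisely when $g$ is $M_G$-unmatched and $h$ is $M_H$-unmatched; hence the number of unmatched vertices of $\fcast(M_G,M_H)$ in $G\star H$ equals $u_G u_H$. Since $\fcast(M_G,M_H)$ is assumed to be a $k$-matching of a graph on $n_G n_H$ vertices, Lemma~\ref{lem:muFormula} immediately gives $|\fcast(M_G,M_H)| = \frac{k}{2}(n_G n_H - u_G u_H)$. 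This is the cleanest route; it avoids a case analysis over (M1)--(M4) entirely, at the cost of invoking the hypothesis that $\fcast(M_G,M_H)$ is already known to be a $k$-matching (which is exactly what the statement assumes).

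For the ``moreover'' part, I would invoke Proposition~\ref{prop:matching-cast}: each of (M1.a), (M2.a), (M4) is one of the conditions (M1)--(M4), so in each case $\fcast(M_G,M_H)$ is a $k$-matching of $G\star H$, and the first part applies. It then remains to show $u_G u_H = 0$ in each of these three cases, since then $|\fcast(M_G,M_H)| = \frac{k}{2} n_G n_H$, which is exactly the size of a perfect $k$-matching in $G\star H$ (by Lemma~\ref{lem:muFormula}, a $k$-matching of that size forces $\unmatchno = 0$). In Case (M1.a) $M_G$ is perfect, so $u_G = 0$; in Case (M2.a) $M_H$ is perfect, so $u_H = 0$; in Case (M4) both $M_G$ and $M_H$ are perfect, so $u_G = u_H = 0$. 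In all three cases $u_G u_H = 0$, so the count collapses to $\frac{k}{2} n_G n_H$ and the matching is perfect.

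I do not expect a serious obstacle here: the argument is essentially bookkeeping built on three already-proven lemmas (disjointness, the unmatched-vertex characterization for $\fcast$, and the $|M| = k(n-u)/2$ formula) plus the classification in Proposition~\ref{prop:matching-cast}. The one point requiring a little care is making sure the ``perfect $k$-matching'' conclusion is stated for $G\star H$ as a whole: one should note that a $k$-matching $M$ of a graph on $N$ vertices with $|M| = \frac{k}{2}N$ has no unmatched vertex (again Lemma~\ref{lem:muFormula}), so $\fcast(M_G,M_H)$ really is perfect, not merely of the right size. Cases (M1.b), (M2.b), (M3) are deliberately excluded from the ``moreover'' statement precisely because there $u_G u_H$ need not vanish, so no subtlety arises from them.
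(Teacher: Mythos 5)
Your proof is correct, but it takes a genuinely different and shorter route than the paper's. The paper decomposes $\fcast(M_G,M_H)$ into the three pairwise vertex-disjoint pieces $\fast(M_G,M_H)$, $\MU_G$, $\MU_H$ (Lemma~\ref{lem:ast-sets-disjoint}), computes explicit cardinalities $|\fast(M_G,M_H)|=2|M_G||M_H|$, $|\MU_G|=u_G|M_H|$, $|\MU_H|=u_H|M_G|$, and then runs a full case analysis over (M1.a), (M1.b), (M2.a), (M2.b), (M3), (M4) --- which are the only possibilities by Prop.~\ref{prop:matching-cast} --- verifying the formula $\frac{k}{2}(n_Gn_H-u_Gu_H)$ in each case by direct algebra. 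You instead bypass the decomposition and the case split entirely: Lemma~\ref{lem:fcast-gh-unmatch} says the $\fcast(M_G,M_H)$-unmatched vertices are exactly the pairs $(g,h)$ with $g\in U_G$ and $h\in U_H$, so there are $u_Gu_H$ of them, and one application of Lemma~\ref{lem:muFormula} to the hypothesized $k$-matching on the $n_Gn_H$ vertices of $G\star H$ gives the count. Your handling of the ``moreover'' part ($u_Gu_H=0$ in each of (M1.a), (M2.a), (M4), and a $k$-matching of size $\frac{k}{2}n_Gn_H$ is forced to be perfect by Lemma~\ref{lem:muFormula}) matches the paper's conclusion but again avoids recomputation. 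What the paper's longer route buys is the explicit per-piece cardinalities, which are reused in nearby results (e.g.\ the comparison with $|\fbast(M_G,M_H)|$ in case (M3) and the formula in Prop.~\ref{prop:size-ast}); what your route buys is brevity and robustness, since it only uses the hypothesis that $\fcast(M_G,M_H)$ is a $k$-matching plus two already-established lemmas. One cosmetic remark: your opening mention of Lemma~\ref{lem:ast-sets-disjoint} and ``adding the sizes of the three pieces'' is vestigial --- the argument you actually carry out never needs the disjointness --- so you could delete it.
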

\begin{proof}
	 Let $G$ and $H$ be of order $n_G$ and $n_H$, respectively and $M_G\subseteq E_G$ and $M_H\subseteq E_H$.
	 Assume that $\fcast(M_G,M_H)$  is a $k$-matching of $G\star H$.
			By Prop.\	\ref{prop:matching-cast}, the sets $M_G$ and $M_H$ must satisfy one of the Conditions (M1)-(M4).
		Moreover, by Lemma
	\ref{lem:ast-sets-disjoint}, the sets $\fast(M_G,M_H)$, $\MU_G$, $\MU_H$ are pairwise
	vertex disjoint and, therefore, $|\fcast(M_G,M_H)| = |\fast(M_G,M_H)| + |\MU_G| +
	|\MU_H|$.	In addition, $|\fast(M_G,M_H)|=2|M_G||M_H|$ since for each pair $\{g,g'\}\in M_G$
	and $\{h,h'\}\in M_H$  the two edges $\{(g,h),(g',h')\}$ and $\{(g,h'),(g',h)\}$ are contained in $\fast(M_G,M_H)$.

	Assume now that $M_G$ is a $k_G$-matching of $G$ and $M_H$ is a $k_H$-matching of $H$.
	Let  $u_G$ and $u_H$ be the number of	$M_G$- and $M_H$-unmatched vertices in $G$ and $H$, respectively.
	By construction, $|\MU_G| = u_G|M_H|$ and $|\MU_H| = u_H|M_G|$.
	Taken the latter arguments together with Lemma \ref{lem:muFormula} we obtain
  \[|\fast(M_G,M_H)| = 2\cdot\frac{k_G (n_G-u_G)}{2}\cdot\frac{k_H(n_H-u_H)}{2} \]
  and
	\[|\MU_G| = u_G \cdot \frac{k_H(n_H-u_H)}{2} \text{ \ and \  }
		|\MU_H| = u_H \cdot \frac{k_G(n_G-u_G)}{2}.\]

	Suppose first that (M1.a) is satisfied, and thus, that $M_G$ is a perfect
	$k$-matching of $G$ and $M_H$ is a non-empty $1$-matching of $H$.
	Hence, $u_G=0$ and thus, $\MU_G=\emptyset$.
	Therefore, $|\fcast(M_G,M_H)|$ reduces to
	\begin{align*}
	|\fcast(M_G,M_H)| & =  |\fast(M_G,M_H)| + |\MU_H|  \\
											&	 = 2\left( \frac{k\cdot n_G}{2}\right)\left(\frac{n_H-u_H}{2}\right) +  u_H \frac{k\cdot n_G}{2}  \\
											&	=	\frac{kn_Gn_H-kn_Gu_H}{2} +  \frac{kn_Gu_H}{2}= \frac{k}{2} n_Gn_H.
	\end{align*}
	By analogous arguments, we obtain the desired equality if (M2.a) is satisfied.
	Note, in both cases (M1.a) and (M2.a), we have $|\fcast(M_G,M_H)| =  \frac{k}{2} n_Gn_H = \frac{k}{2} n_{G\star H}$
 	and Lemma \ref{lem:muFormula} implies that $\fcast(M_G,M_H)$ must be a perfect $k$-matching of $G\star H$.

	Now assume that Case (M1.b) is satisfied. Thus, $M_H=\emptyset$ and therefore,
			$\MU_G = \emptyset$ and $n_H=u_H$.
	Hence, $|\fcast(M_G,M_H)|$ reduces to
	\begin{align*}
	|\fcast(M_G,M_H)| & =  |\fast(M_G,M_H)| + |\MU_H|  \\
											&	 = 2\cdot |M_G|\cdot 0 +  u_H \frac{k(n_G-u_G)}{2}
												=	\frac{k}{2}\left(n_Gn_H-u_Gu_H\right).
	\end{align*}
	where we employed for the last equality $n_H=u_H$.
		By analogous arguments, we obtain the desired equality if (M2.b) is satisfied.

	Now assume that Case (M3) is satisfied and thus,
	$M_G$ is a $1$-matching of $G$ and $M_H$ is a $1$-matching of $H$. We have
	\begin{align*}
	|\fcast(M_G,M_H)| & = 2|M_G||M_H| + u_H|M_G| + u_G|M_H| \\
											 & = |M_G|(2|M_H| + u_H) + u_H|M_G|\\
											 & = |M_G|n_H  + u_H|M_G| =  |\fbast(M_G,M_H)| \\
											 & = \frac{k}{2} \left(n_Gn_H-u_Gu_H \right),
	\end{align*}
	where we employed for the 2nd last equality $n_H = 2|M_H| + u_H$ (which is satisfied
	by Lemma \ref{lem:muFormula} for $1$-matchings) and
	for the last equality  we used Prop.\ \ref{prop:maxexprequal}.

	Finally, consider Case (M4).
	Thus, $M_G$ is a perfect $k_G$-matching of $G$ and $M_H$ is a perfect $k_H$-matching of $H$
	with $k=k_Gk_H$.
	Therefore, $u_G=u_H=0$ and $\MU_G=\MU_H=\emptyset$.
	Hence, we obtain
	\begin{align*}
	|\fcast(M_G,M_H)| & = |\fast(M_G,M_H)|
											  = 2\frac{k_G n_G}{2}\frac{k_H n_H}{2}
											  = 	k_Gk_H \frac{n_Gn_H}{2} =  \frac{k}{2} n_Gn_H.
	\end{align*}
  Thus, $|\fcast(M_G,M_H)| =  \frac{k}{2} n_Gn_H = \frac{k}{2} n_{G\star H}$
 	and Lemma \ref{lem:muFormula} implies that $\fcast(M_G,M_H)$ must be a perfect $k$-matching of $G\star H$.

 	By the latter arguments, for a perfect $k$-matching $\fcast(M_G,M_H)$ of $G\star H$ the sets $M_G$ and $M_H$
 	must satisfy one of the conditions (M1.a), (M2.a) or (M4), in which case, $|\fcast(M_G,M_H)| =  \frac{k}{2} n_Gn_H$
 	is always satisfied.
\end{proof}

\begin{definition}
Let $\star\in\{\sprod,\lprod\}$. The $k$-matching number $\matchno_k(G\star
H)$ of $G\star H$ is \emph{$\circledast$-well-behaved w.r.t.\ $G$ and $H$}
if  \[\matchno_k(G\star H) = |\fcast(M_G,M_H)|\]
for some $k$-matching $\fcast(M_G,M_H)$
with $M_G\subseteq E_G$ and $M_H\subseteq E_H$.
In this case, the $k$-matching $\fcast(M_G,M_H)$
 is also called \emph{$\circledast$-well-behaved}.
\end{definition}

We provide now characterization of $\circledast$-well-behaved $k$-matching numbers.

\begin{theorem}\label{thm:cast-wellb-new}
	Let $\star\in\{\sprod,\lprod\}$.
	The following two statements are equivalent for all graphs $G$ and $H$ and every integer $k\geq 1$.
	\begin{enumerate}[itemsep=0.02em]
		\item $\matchno_k(G\star H)$ is $\circledast$-well-behaved w.r.t.\ $G$ and $H$
		\item There is some $k_G$-matching $M_G$ of $G$ and
					some $k_H$-matching $M_H$ of $H$
					such that $M_G$ and $M_H$ satisfy one of the Conditions (M1), (M2), (M3) and (M4)
					and
					$\unmatchno_k(G\star H) = u_{G}u_{H}$
					with $u_G$ and $u_H$ being the $M_G$- and $M_H$-unmatched vertices in $G$ and $H$, respectively.
	\end{enumerate}
\end{theorem}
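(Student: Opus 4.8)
The plan is to follow the template of Theorem~\ref{thm:2ndChar}, now feeding in the $\circledast$-machinery in place of the $\boxast$-machinery. The engine in both directions is the unconditional identity $\matchno_k(G\star H) = \frac{k}{2}\bigl(n_{G\star H} - \unmatchno_k(G\star H)\bigr) = \frac{k}{2}\bigl(n_Gn_H - \unmatchno_k(G\star H)\bigr)$ from Lemma~\ref{lem:muFormula}, to be matched against the size formula $|\fcast(M_G,M_H)| = \frac{k}{2}(n_Gn_H - u_Gu_H)$ of Prop.~\ref{prop:size-cast}, which holds whenever $\fcast(M_G,M_H)$ is a $k$-matching.

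For $(1)\Rightarrow(2)$, I would start from $\matchno_k(G\star H) = |\fcast(M_G,M_H)|$ for some $k$-matching $\fcast(M_G,M_H)$ with $M_G\subseteq E_G$ and $M_H\subseteq E_H$. Prop.~\ref{prop:matching-cast} then tells me that $(M_G,M_H)$ --- regarded as a $k_G$-matching of $G$ and a $k_H$-matching of $H$, with $k_G,k_H$ related to $k$ as prescribed by the relevant case --- satisfies one of (M1)--(M4). Substituting the size formula of Prop.~\ref{prop:size-cast} into the hypothesis and comparing with the Lemma~\ref{lem:muFormula} identity forces $\unmatchno_k(G\star H) = u_Gu_H$, which is exactly~(2).

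For $(2)\Rightarrow(1)$, given $M_G$ a $k_G$-matching of $G$, $M_H$ a $k_H$-matching of $H$ satisfying one of (M1)--(M4), and $\unmatchno_k(G\star H) = u_Gu_H$, the \emph{if}-direction of Prop.~\ref{prop:matching-cast} gives that $\fcast(M_G,M_H)$ is a $k$-matching of $G\star H$ with the same $k$ appearing in the condition (hence the one in $\matchno_k$). Prop.~\ref{prop:size-cast} then evaluates its size as $\frac{k}{2}(n_Gn_H - u_Gu_H) = \frac{k}{2}(n_Gn_H - \unmatchno_k(G\star H)) = \matchno_k(G\star H)$, the last equality by Lemma~\ref{lem:muFormula}. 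Hence $\fcast(M_G,M_H)$ is a maximum $k$-matching of $G\star H$, i.e.\ $\matchno_k(G\star H)$ is $\circledast$-well-behaved.

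The main point requiring care --- and the only genuine obstacle I foresee --- is the bookkeeping of the parameter $k$: unlike the $\boxast$-case, in (M1)--(M4) the value of $k$ is tied to $k_G$ and $k_H$ (for instance $k = k_Gk_H$ in (M4), $k_H = 1$ in (M1.a)), so I must verify that the $k$-matching produced by Prop.~\ref{prop:matching-cast} carries precisely the $k$ appearing in $\matchno_k$ and $\unmatchno_k$, and deal explicitly with the degenerate case $\fcast(M_G,M_H) = \emptyset$, equivalently $M_G = M_H = \emptyset$, where $u_G = n_G$, $u_H = n_H$, $\unmatchno_k(G\star H) = n_Gn_H$ and (M3) applies. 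Apart from this, the argument is a mechanical composition of Prop.~\ref{prop:matching-cast}, Prop.~\ref{prop:size-cast} and Lemma~\ref{lem:muFormula}, entirely parallel to the proof of Theorem~\ref{thm:2ndChar}; no further difficulty is anticipated.
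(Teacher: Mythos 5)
Your argument is correct and is essentially the paper's own proof: both directions combine Prop.~\ref{prop:matching-cast}, the size formula of Prop.~\ref{prop:size-cast}, and the identity $\matchno_k(G\star H)=\frac{k}{2}(n_Gn_H-\unmatchno_k(G\star H))$ from Lemma~\ref{lem:muFormula} in exactly the way you describe. Your extra attention to the bookkeeping of $k$ and the degenerate case $M_G=M_H=\emptyset$ is sound but adds nothing beyond what the cited propositions already guarantee.
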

\begin{proof}
	Assume that $\matchno_k(G\star H)$ is $\circledast$-well-behaved w.r.t.\ $G$ and $H$.
	Thus, there is some $k$-matching $\fcast(M_G,M_H)$ of size $\matchno_k(G\star H)$.
	Prop.\ \ref{prop:matching-cast} implies that $M_G$ and $M_H$
	must satisfy one of the Conditions (M1), (M2), (M3) and (M4).
	By   Prop.\ \ref{prop:size-cast}, $|\fcast(M_G,M_H)| = \frac{k}{2}
  \left(n_Gn_H-u_Gu_H \right)$.
   By Lemma~\ref{lem:muFormula},
   $\matchno_k(G\star H)=\frac{k}{2} (n_{G\star H}-\unmatchno_k(G\star H ))=\frac{k}{2} (n_G n_H-\unmatchno_k(G\star H))$,
  and thus, $\unmatchno_k(G\star H) = u_{G}u_{H}$.

	Now assume that there is a $k_G$-matching $M_G$ of $G$ and a $k_H$-matching $M_H$ of $H$
	such that $M_G$ and $M_H$ satisfy one of the Conditions (M1), (M2), (M3) and (M4)
	and $\unmatchno_k(G\star H) = u_{G}u_{H}$.
	By Prop.\ \ref{prop:matching-cast},
	$\fcast(M_G,M_H)$ is a $k$-matching of $G\star H$.
	This together with $\unmatchno_k(G\star H) = u_{G}u_{H}$, Lemma \ref{lem:muFormula}  and  Prop.\ \ref{prop:size-cast} implies that
	$|\fcast(M_G,M_H)| = \frac{k}{2} \left(n_Gn_H- u_Gu_H\right) =\frac{k}{2} (n_G n_H-\unmatchno_k(G\star H)) =\frac{k}{2} (n_{G\star H}-\unmatchno_k(G\star H )) = \matchno_k(G\star H)$.
	Thus, $\matchno_k(G\star H) $ is $\boxast$-well-behaved w.r.t.\ $G$ and $H$.
	\end{proof}

\begin{proposition}
 Let $\star\in \{\sprod,\lprod\}$. If there are $k$-matchings $M_G$ of $G$ and $M_H$ of $H$ such that
	 one of  $M_G$ or $M_H$ is empty and   $\matchno_k(G\star H) = |\fbast(M_G,M_H)|$, then
 $\matchno_k(G\star H)$ is   $\circledast$-well-behaved  w.r.t.\ $G$ and $H$.
\end{proposition}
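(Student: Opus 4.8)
The plan is to show that the hypothesis forces the edge set $\fcast(M_G,M_H)$ to coincide \emph{verbatim} with $\fbast(M_G,M_H)$, after which the conclusion is immediate. The two subcases ``$M_H=\emptyset$'' and ``$M_G=\emptyset$'' are handled in the same way, with the roles of $G$ and $H$ interchanged; I describe the first one.

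So suppose $M_H=\emptyset$, so that every vertex of $H$ is $M_H$-unmatched, i.e.\ $U_H=V_H$. Inspecting Definitions~\ref{def:bast}, \ref{def:ast} and \ref{def:cast}: since there is no pair $\{h,h'\}\in M_H$ we get $\MU_G=\emptyset$ and $\fast(M_G,M_H)=\emptyset$, hence $\fbast(M_G,M_H)=\MB_G$ and $\fcast(M_G,M_H)=\MU_H$. Now, because $U_H=V_H$, the condition ``$u\in U_H$ and $\{g,g'\}\in M_G$'' defining the edges of $\MU_H$ is, after renaming the bound variable, exactly the condition ``$h\in V_H$ and $\{g,g'\}\in M_G$'' defining the edges of $\MB_G$. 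Therefore $\MU_H=\MB_G$, and consequently
\[ \fcast(M_G,M_H)=\MU_H=\MB_G=\fbast(M_G,M_H). \]

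Next I would verify that $\fcast(M_G,M_H)$ really is a $k$-matching of $G\star H$, and not merely an edge set of the right cardinality. If $M_G\neq\emptyset$, then $M_G$ is a non-empty $k$-matching of $G$ with $M_H=\emptyset$, which is precisely Case (M1.b) of Proposition~\ref{prop:matching-cast}, so $\fcast(M_G,M_H)$ is a $k$-matching. If instead $M_G=\emptyset$, then $\fcast(M_G,M_H)=\emptyset$ is trivially a $k$-matching, and in this degenerate case $\matchno_k(G\star H)=|\fbast(\emptyset,\emptyset)|=0$, so $\emptyset$ is a maximum $k$-matching. Either way, combining this with the displayed identity and the hypothesis $\matchno_k(G\star H)=|\fbast(M_G,M_H)|$ gives $|\fcast(M_G,M_H)|=\matchno_k(G\star H)$, so $\fcast(M_G,M_H)$ is a maximum $k$-matching of $G\star H$; by definition this means $\matchno_k(G\star H)$ is $\circledast$-well-behaved w.r.t.\ $G$ and $H$. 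For the subcase $M_G=\emptyset$ one argues symmetrically: $U_G=V_G$ forces $\MU_H=\emptyset$ and $\fast(M_G,M_H)=\emptyset$ while $\MU_G=\MB_H$, so $\fcast(M_G,M_H)=\MB_H=\fbast(M_G,M_H)$, and $\fcast(M_G,M_H)$ is a $k$-matching by Case (M2.b) of Proposition~\ref{prop:matching-cast} (or trivially if also $M_H=\emptyset$).

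There is no substantial obstacle here: the argument is bookkeeping that matches up the four edge-set definitions, once one observes that forcing one factor's matching to be empty makes one of the two sets $\MU_G$, $\MU_H$ empty and turns the other into the corresponding set $\MB_H$ or $\MB_G$, so that $\fcast(M_G,M_H)$ collapses to $\fbast(M_G,M_H)$. The only point that needs a little care is not to deduce $\circledast$-well-behavedness from the cardinality coincidence alone: one still has to invoke Proposition~\ref{prop:matching-cast} (or the explicit identification above) to know that $\fcast(M_G,M_H)$ is an admissible $k$-matching, together with the one-line treatment of the degenerate case $M_G=M_H=\emptyset$.
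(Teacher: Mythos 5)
Your proof is correct, and it takes a slightly different (and in fact sharper) route than the paper's. The paper first observes that the hypotheses place $M_G,M_H$ into case (M1.b), (M2.b) or (M3) of Prop.~\ref{prop:matching-cast}, so that $\fcast(M_G,M_H)$ is a $k$-matching, and then deduces $|\fbast(M_G,M_H)|=|\fcast(M_G,M_H)|$ by comparing the two cardinality formulas of Prop.~\ref{prop:maxexprequal} and Prop.~\ref{prop:size-cast}, both of which evaluate to $\tfrac{k}{2}(n_Gn_H-u_Gu_H)$. You instead unwind the four definitions to show that when, say, $M_H=\emptyset$ one has $\MU_G=\fast(M_G,M_H)=\emptyset$ and $\MU_H=\MB_G$ (because $U_H=V_H$), so that $\fcast(M_G,M_H)$ and $\fbast(M_G,M_H)$ are literally the \emph{same} edge set, not merely equinumerous; the size identity is then automatic. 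Both arguments still need the step you correctly flag as the one requiring care, namely invoking Prop.~\ref{prop:matching-cast} (cases (M1.b)/(M2.b), or the trivial all-empty case) to certify that $\fcast(M_G,M_H)$ is an admissible $k$-matching before concluding $\circledast$-well-behavedness. Your version buys a stronger structural statement ($\fcast(M_G,M_H)=\fbast(M_G,M_H)$ under these hypotheses) and avoids the counting propositions entirely; the paper's version is shorter given that those propositions are already established and reuses them uniformly across all cases including (M3).
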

\begin{proof}
 	Let $M_G$ or $M_H$ be $k$-matchings  $G$ and $H$, respectively, such
	that one of  $M_G$ or $M_H$ is empty and   $\matchno_k(G\star H) = |\fbast(M_G,M_H)|$.
	If one of the sets 	$M_G$ or $M_H$ is non-empty, then (M1.b),  (M2.b) or (M3) is satisfied.
	By 	Prop.\ \ref{prop:matching-cast}, $\fcast(M_G,M_H)$ is a $k$-matching.
  By	Prop.\ \ref{prop:maxexprequal} and Prop.\ \ref{prop:size-cast},
  we obtain  $\matchno_k(G\star H) = |\fbast(M_G,M_H)| = |\fcast(M_G,M_H)|$.
	If both  	$M_G$ or $M_H$ are empty $k$-matchings, then $\fbast(M_G,M_H)=\fcast(M_G,M_H)=\emptyset$.
	In this case, $\matchno_k(G\star H) = |\fbast(M_G,M_H)| = 0 = |\fcast(M_G,M_H)|$.
	  Hence, in all cases,  $\matchno_k(G\star H)$ is   $\circledast$-well-behaved  w.r.t.\ $G$ and $H$.
\end{proof}


\subsection{$\boldsymbol{\ast}$-well-behaved}
\label{subsec:ast-wb}

Again, we will make frequent use of the condition (M1), (M2), (M3) and (M4) as specified in Proposition\ \ref{prop:matching-cast}.

\begin{definition}
Let $\star\in\{\sprod,\lprod,\dprod\}$. The $k$-matching number $\matchno_k(G\star
H)$ of $G\star H$ is \emph{$\ast$-well-behaved w.r.t.\ $G$ and $H$}
if  \[\matchno_k(G\star H) = |\fast(M_{G}, M_{H})|\]
for some $k$-matching $\fast(M_{G}, M_{H})$ of $G\star H$.
In this case, the $k$-matching $\fast(M_{G}, M_{H})$
		is also called \emph{$\ast$-well-behaved}.
\end{definition}

\begin{proposition}\label{prop:size-ast}
 Let $\star\in \{\sprod,\lprod,\dprod\}$ and
  $G$ and $H$ be two graphs of order $n_G$ and $n_H$, respectively. Moreover, let
 $M_G\subseteq E_G$ and $M_H\subseteq E_H$ and denote the number of $M_G$-unmatched vertices in $G$
 and $M_H$-unmatched vertices in $H$ by $u_G$ and $u_H$, respectively.
			If 	$\fast(M_G,M_H)$  is a $k$-matching of $G\star H$, then
			 \[|\fast(M_G,M_H)| = \frac{k}{2} \left(n_G-u_G\right)\left(n_H-u_H \right). \]
\end{proposition}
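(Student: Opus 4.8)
The plan is to reduce the claim to a short counting argument together with Lemma~\ref{lem:muFormula}. First I would dispose of the degenerate case. If $\fast(M_G,M_H)=\emptyset$, then by construction at least one of $M_G$ and $M_H$ is empty (cf.\ the remark following Prop.~\ref{prop:matching-ast}); say $M_G=\emptyset$, so that every vertex of $G$ is $M_G$-unmatched and hence $u_G=n_G$. Then $\frac{k}{2}(n_G-u_G)(n_H-u_H)=0=|\fast(M_G,M_H)|$, and the case $M_H=\emptyset$ is symmetric.

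Now assume $\fast(M_G,M_H)$ is a non-empty $k$-matching of $G\star H$. By Prop.~\ref{prop:matching-ast}, $M_G$ is a non-empty $k_G$-matching of $G$ and $M_H$ a non-empty $k_H$-matching of $H$ with $k_G k_H=k$. Next I would count the edges of $\fast(M_G,M_H)$: for each pair $\{g,g'\}\in M_G$ and $\{h,h'\}\in M_H$ the set $\fast(M_G,M_H)$ contains exactly the two edges $\{(g,h),(g',h')\}$ and $\{(g,h'),(g',h)\}$, these are genuinely distinct since $g\neq g'$ and $h\neq h'$ ($G$ and $H$ being loop-free), and distinct pairs of factor-edges yield distinct pairs of product-edges. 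Hence $|\fast(M_G,M_H)|=2|M_G||M_H|$, which is exactly the count already recorded in the proof of Prop.~\ref{prop:size-cast}.

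Finally, I would apply Lemma~\ref{lem:muFormula} to the $k_G$-matching $M_G$ and the $k_H$-matching $M_H$, obtaining $|M_G|=k_G(n_G-u_G)/2$ and $|M_H|=k_H(n_H-u_H)/2$, and substitute:
\[
|\fast(M_G,M_H)| = 2\cdot\frac{k_G(n_G-u_G)}{2}\cdot\frac{k_H(n_H-u_H)}{2} = \frac{k}{2}\left(n_G-u_G\right)\left(n_H-u_H\right),
\]
using $k_G k_H=k$. This is an entirely routine computation; the only points needing a moment's care are the factor of $2$ in the edge count (ensuring the two product-edges associated with a pair of factor-edges are distinct, which uses loop-freeness) and the empty case, which is why I would single it out at the start.
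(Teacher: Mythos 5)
Your proposal is correct and follows essentially the same route as the paper's own proof: dispose of the empty case via $n_G=u_G$ (or $n_H=u_H$), invoke Prop.~\ref{prop:matching-ast} to get a $k_G$-matching and a $k_H$-matching with $k_Gk_H=k$, count $|\fast(M_G,M_H)|=2|M_G||M_H|$, and substitute the sizes from Lemma~\ref{lem:muFormula}. The only difference is that you spell out slightly more carefully why the two product-edges arising from a pair of factor-edges are distinct, which the paper leaves implicit.
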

\begin{proof}
	 Let $G$ and $H$ be of order $n_G$ and $n_H$, respectively and $M_G\subseteq E_G$
	 and $M_H\subseteq E_H$. Assume that $\fast(M_G,M_H)$ is a $k$-matching of $G\star
	 H$. If it is empty, then at least one of $M_G$ and $M_H$ is empty, so that $n_G=u_G$ (resp. $n_H=u_H$).
	 Then the statement holds trivially, since 
	 $$|\fast(M_G,M_H)| = 0 = \frac{k}{2} \left(n_G-u_G\right)\left(n_H-u_H \right).$$
	 Hence, suppose that neither $M_G$ nor $M_H$ is empty.
	 By Prop.\ \ref{prop:matching-ast}, $M_G$ must be a $k_G$-matching of $G$ and
	 $M_H$ be a $k_H$-matching of $H$, where $k_G\cdot k_H=k$. By definition,
	 $|\fast(M_G,M_H)|=2|M_G||M_H|$ since for each pair $\{g,g'\}\in M_G$ and
	 $\{h,h'\}\in M_H$ the two edges $\{(g,h),(g',h')\}$ and $\{(g,h'),(g',h)\}$ are
	 contained in $\fast(M_G,M_H)$. By Lemma \ref{lem:muFormula}, we thus obtain
	 \[|\fast(M_G,M_H)|=2\frac{k_G(n_G-u_G)}{2}\frac{k_H(n_H-u_H)}{2} = \frac{k}{2}
	 \left(n_G-u_G\right)\left(n_H-u_H \right),\] where we used $k_G\cdot k_H=k$ in the
	 latter equation.
\end{proof}

\begin{lemma}
	Let $\star\in\{\sprod,\lprod\}$ and let $k$ be a prime number.
	If $\matchno_k(G\star H)$ is $\ast$-well-behaved
	w.r.t.\ $G$ and $H$, then $\matchno_k(G\star H)$ is
	$\circledast$-well-behaved w.r.t.\ $G$ and $H$ and $\MU_G=\emptyset$ or $\MU_H=\emptyset$.
\end{lemma}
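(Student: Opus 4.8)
The plan is to first dispose of the degenerate case, then use the primality of $k$ to pin down the shape of the factor matchings, and finally observe that one cannot reuse those matchings verbatim in the $\circledast$-construction. If $\matchno_k(G\star H)=0$, then $\fcast(\emptyset,\emptyset)=\emptyset$ is a $k$-matching of size $0=\matchno_k(G\star H)$, so $\matchno_k(G\star H)$ is $\circledast$-well-behaved and trivially $\MU_G=\MU_H=\emptyset$. Otherwise I would assume $\matchno_k(G\star H)\geq 1$ and fix $M_G,M_H$ with $\fast(M_G,M_H)$ a $k$-matching of $G\star H$ of size $\matchno_k(G\star H)$; in particular $\fast(M_G,M_H)\neq\emptyset$. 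By Prop.~\ref{prop:matching-ast}, $M_G$ is a non-empty $k_G$-matching of $G$ and $M_H$ a non-empty $k_H$-matching of $H$ with $k_Gk_H=k$. Since $k$ is prime and $k_G,k_H\geq 1$, exactly one of $k_G,k_H$ equals $1$ and the other equals $k$. Writing $u_G,u_H$ for the numbers of $M_G$- and $M_H$-unmatched vertices and $n_G,n_H$ for the orders of $G,H$, non-emptiness of $M_G,M_H$ gives $n_G-u_G>0$ and $n_H-u_H>0$, and Prop.~\ref{prop:size-ast} yields $\matchno_k(G\star H)=\tfrac k2(n_G-u_G)(n_H-u_H)$.

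Next I would treat the case $k_G=1$, $k_H=k$, so that $M_G$ is a non-empty $1$-matching of $G$ and $M_H$ a non-empty $k$-matching of $H$. The key claim is that $M_G$ must in fact be perfect. Indeed, $\fcast(\emptyset,M_H)$ is a $k$-matching of $G\star H$ by Prop.~\ref{prop:matching-cast} (Case M2.b), and by Prop.~\ref{prop:size-cast} it has size $\tfrac k2(n_Gn_H-n_Gu_H)=\tfrac k2 n_G(n_H-u_H)$; if $u_G>0$ this strictly exceeds $\tfrac k2(n_G-u_G)(n_H-u_H)=\matchno_k(G\star H)$ (using $n_H-u_H>0$), contradicting maximality. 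Hence $u_G=0$, so $U_G=\emptyset$ and $\MU_G=\emptyset$; and now $|\fcast(\emptyset,M_H)|=\tfrac k2 n_G(n_H-u_H)=\tfrac k2(n_G-u_G)(n_H-u_H)=\matchno_k(G\star H)$, so $\matchno_k(G\star H)$ is $\circledast$-well-behaved w.r.t.\ $G$ and $H$. The case $k_G=k$, $k_H=1$ is entirely symmetric: there $M_H$ is a non-empty $1$-matching of $H$; if $u_H>0$ then $\fcast(M_G,\emptyset)$, which is a $k$-matching by Prop.~\ref{prop:matching-cast} (Case M1.b), has size $\tfrac k2 n_H(n_G-u_G)$ by Prop.~\ref{prop:size-cast}, again contradicting maximality, so $u_H=0$, $\MU_H=\emptyset$, and $\fcast(M_G,\emptyset)$ witnesses $\circledast$-well-behavedness. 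In either case $\MU_G=\emptyset$ or $\MU_H=\emptyset$, as claimed.

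The point to get right --- and the only real obstacle --- is that one cannot simply reuse $M_G,M_H$ in the $\circledast$-construction: when the ``$k$-side'' matching is imperfect, $\fcast(M_G,M_H)$ fails to be a $k$-matching because the $\MU$-edges coming from the perfect $1$-matching side produce vertices of degree $1$ in $F=(V,\fcast(M_G,M_H))$ (Lemma~\ref{lem:degree-cast}), whereas the $\fast$-edges give degree $k\geq 2$. The remedy is to replace that $1$-matching set by $\emptyset$, so that the construction falls under Case M1.b or M2.b of Prop.~\ref{prop:matching-cast}; after that, everything reduces to a short size computation via Prop.~\ref{prop:size-ast} and Prop.~\ref{prop:size-cast}, the crucial inequality being $n_G(n_H-u_H)>(n_G-u_G)(n_H-u_H)$ whenever $u_G>0$ and $n_H-u_H>0$, which is exactly what forces the $1$-matching to be perfect.
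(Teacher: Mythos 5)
Your proof is correct and follows essentially the same route as the paper: primality of $k$ forces a $k_G\cdot k_H=k$ split with one side a $1$-matching, and the witness for $\circledast$-well-behavedness is obtained by replacing that $1$-matching by $\emptyset$ so that Case (M1.b) or (M2.b) of Prop.~\ref{prop:matching-cast} applies, followed by the size comparison via Prop.~\ref{prop:size-ast} and Prop.~\ref{prop:size-cast}. Your organization is in fact slightly cleaner: where the paper splits on whether the $k$-matching factor is perfect, you show directly that the $1$-matching factor must be perfect, and you make explicit (via Lemma~\ref{lem:degree-cast}) why the original pair cannot be reused in $\fcast$. One small bookkeeping slip: in your case $k_G=1$, $k_H=k$ the witness is $\fcast(\emptyset,M_H)$, for which it is $\MU_H$ (not $\MU_G$) that is empty relative to the witnessing pair --- the stated disjunction still holds, so nothing breaks, but the labels should be swapped to match the convention used in the paper's proof.
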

\begin{proof}
	Assume that $\matchno_k(G\star H)$ is $\ast$-well-behaved w.r.t.\ $G$ and $H$.
	The statement is vacuously true if $\matchno_k(G\star H) = 0$.
	Assume now that $\matchno_k(G\star H) \neq 0$.
  By	definition, there is a non-empty $k$-matching $\fast(M_{G}, M_{H})$ of $G\star H$.
	This together with Prop.\ \ref{prop:matching-ast}  and the fact  that $\matchno_k(G\star H)\neq 0$ and  $k$ is prime
	implies that there is a non-empty $k$-matching $M_G$ of $G$ and a non-empty $1$-matching of $M_H$
	or \emph{vice versa}. W.l.o.g.\ assume that $M_G$ is a non-empty $k$-matching of $G$ and
	that $M_G$ is a non-empty $1$-matching of $G$.
	If $M_G$ is a perfect $k$-matching  of $G$, then (M1.a) is satisfied
	and  $\fcast(M_{G}, M_{H})$ is a perfect $k$-matching of $G\star H$ (cf.\ Prop.\ \ref{prop:size-cast}).
	and so	$\matchno_k(G\star H)\neq 0$ is 	$\circledast$-well-behaved and $\MU_G=\emptyset$.

	Now assume that $M_G$ is not a perfect $k$-matching   of $G$ and define $M'_H=\emptyset$. 
	Hence, $M_G$ and $M'_H$ satisfy (M1.b)
	and thus, $\fcast(M_{G}, M'_{H})$ is a $k$-matching of $G\star H$ (cf.\ Prop.\ \ref{prop:matching-cast}).
	We show  that $|\fcast(M_{G}, M'_{H})| = |\fast(M_{G}, M_{H})|$.
	Since  $M'_H=\emptyset$ we have $\fcast(M_{G}, M'_{H}) = \MU_H$ and
	thus,  $|\fcast(M_{G}, M'_{H})| = n_H|M_G|$.
	Now consider  $\fast(M_{G}, M_{H})$.
	Since $M_H$ is a 1-matching, 
	the edges in $M_H$ are pairwise vertex disjoint. Hence,
	$|M_H| = \frac{1}{2} |W_H|$ with $W_H = \cup_{e\in M_H}e \subseteq V_H$.
	Thus, we can write  $|\fast(M_{G}, M_{H})| = 2|M_G||M_H| = |M_G||W_H|$.
	Clearly, $|\fcast(M_{G}, M'_{H})| = n_H|M_G|\geq |M_G||W_H| = |\fast(M_{G}, M_{H})|$.
	However, since both  $\fcast(M_{G}, M'_{H})$ and $\fast(M_{G}, M_{H})$ are  $k$-matchings
	and since $\fast(M_{G}, M_{H}) = \matchno_k(G\star H)$, it must hold that
	$|\fcast(M_{G}, M'_{H})|=\matchno_k(G\star H)$ and thus,
	 $\matchno_k(G\star H)\neq 0$ is
	$\circledast$-well-behaved w.r.t.\ $G$ and $H$ and $\MU_G=\emptyset$.
\end{proof}


We finally provide a characterization of  $\ast$-well-behaved $k$-matching numbers 
which is  based on the structural properties of the factors.

\begin{theorem}
	Let $\star\in\{\sprod,\lprod,\dprod\}$.
	The following  statements are equivalent for all graphs $G$ and $H$ and every integer $k\geq 1$.
	\begin{enumerate}[itemsep=0.02em]
		\item $\matchno_k(G\star H)$ is $\ast$-well-behaved w.r.t.\ $G$ and $H$
		\item There is some $k_G$-matching $M_G$ of $G$ and	some $k_H$-matching $M_H$ of $H$
					such that $\unmatchno_k(G\star H) = n_Gu_H + n_Hu_G - u_Gu_H$ 
					with $u_G$ and $u_H$ being the $M_G$- and $M_H$-unmatched vertices in $G$ and $H$, respectively.
					In this case, $M_G$ and $M_H$
					are maximum matchings of $G$ respectively $H$.
		\item $\matchno_k(G\star H) = 2 \matchno_{k_G}(G) \matchno_{k_H}(H)$ for some positive integers $k_G$ and $k_H$
		      satisfying $k=k_Gk_H$.
	\end{enumerate}
\end{theorem}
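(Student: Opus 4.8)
The plan is to prove the cycle $(1)\Rightarrow(2)\Rightarrow(3)\Rightarrow(1)$, with every implication resting on just two identities already at hand: the size formula $|\fast(M_G,M_H)|=\tfrac{k}{2}(n_G-u_G)(n_H-u_H)$ of Prop.\ \ref{prop:size-ast} (valid whenever $\fast(M_G,M_H)$ is a $k$-matching), and the global identity $\matchno_k(G\star H)=\tfrac{k}{2}\bigl(n_Gn_H-\unmatchno_k(G\star H)\bigr)$ of Lemma~\ref{lem:muFormula}. Throughout I take $k_G,k_H\geq 1$ to be the exponents with $k_Gk_H=k$ that Prop.\ \ref{prop:matching-ast} attaches to a non-empty $\fast$.

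For $(1)\Rightarrow(2)$: if $\matchno_k(G\star H)=|\fast(M_G,M_H)|$ for some $k$-matching $\fast(M_G,M_H)\neq\emptyset$, then $M_G$ is a non-empty $k_G$-matching and $M_H$ a non-empty $k_H$-matching by Prop.\ \ref{prop:matching-ast}, and equating the two identities above gives $\unmatchno_k(G\star H)=n_Gn_H-(n_G-u_G)(n_H-u_H)=n_Gu_H+n_Hu_G-u_Gu_H$. Maximality of $M_G$ and $M_H$ then follows from an \emph{enlargement argument}: were $M_G$ not a maximum $k_G$-matching, a maximum one $M_G^{*}$ has $u_G^{*}<u_G<n_G$ by Lemma~\ref{lem:muFormula}, so by Prop.\ \ref{prop:matching-ast} and Prop.\ \ref{prop:size-ast} the set $\fast(M_G^{*},M_H)$ is a $k$-matching of size $\tfrac k2(n_G-u_G^{*})(n_H-u_H)>\matchno_k(G\star H)$, a contradiction; symmetrically for $M_H$. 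The degenerate case $\matchno_k(G\star H)=0$ is handled apart: then $\matchno_k(G)=0$ or $\matchno_1(H)=0$ (otherwise Prop.\ \ref{prop:matching-ast} with $k_G=k$, $k_H=1$ would produce a non-empty $\fast$), and choosing maximum witnesses $M_G$, $M_H$ for the factorization $k_G=k$, $k_H=1$ one verifies the displayed identity directly using $u_G=n_G$, respectively $u_H=n_H$.

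For $(2)\Rightarrow(3)$: given $M_G,M_H$ as in (2), the set $\fast(M_G,M_H)$ is a $k$-matching of $G\star H$ (Prop.\ \ref{prop:matching-ast}, or $\fast=\emptyset$ when one of $M_G,M_H$ is empty), and running the computation of $(1)\Rightarrow(2)$ backwards shows $|\fast(M_G,M_H)|=\matchno_k(G\star H)$; hence it is a maximum $k$-matching and, by the same enlargement argument, $M_G,M_H$ are maximum, so $\matchno_k(G\star H)=2|M_G||M_H|=2\matchno_{k_G}(G)\matchno_{k_H}(H)$ with $k=k_Gk_H$. For $(3)\Rightarrow(1)$: take a maximum $k_G$-matching $M_G$ of $G$ and a maximum $k_H$-matching $M_H$ of $H$; then $\fast(M_G,M_H)$ is a $k$-matching of size $2|M_G||M_H|=2\matchno_{k_G}(G)\matchno_{k_H}(H)=\matchno_k(G\star H)$, i.e.\ a maximum $k$-matching realized by an $\fast$-construction, which is precisely $\ast$-well-behavedness.

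The algebraic steps are routine; the part I expect to be fiddly is the bookkeeping of the boundary cases — empty $M_G$ or $M_H$, the value $\matchno_k(G\star H)=0$, and checking that the enlargement argument is not vacuous, which needs $n_G-u_G>0$ and $n_H-u_H>0$ and hence the non-emptiness of the witness matchings. One should also take care that Prop.\ \ref{prop:size-ast} is invoked only for sets that are genuine $k$-matchings, and that the factorization $k=k_Gk_H$ carried through each implication is consistently the one supplied by Prop.\ \ref{prop:matching-ast} in the non-empty regime (or $k_G=k$, $k_H=1$ in the degenerate one).
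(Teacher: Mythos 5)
Your proof is correct in substance and rests on exactly the same ingredients as the paper's: equating the size formula of Prop.\ \ref{prop:size-ast} with the identity of Lemma~\ref{lem:muFormula}, and invoking Prop.\ \ref{prop:matching-ast} to extract the exponents $k_G,k_H$. The only real difference is organisational: the paper proves $(1)\Rightarrow(3)\Rightarrow(2)\Rightarrow(1)$ and obtains maximality of $M_G,M_H$ by a sandwich inequality $2|M_G||M_H|\leq 2\matchno_{k_G}(G)\matchno_{k_H}(H)\leq\matchno_k(G\star H)$, whereas you run $(1)\Rightarrow(2)\Rightarrow(3)\Rightarrow(1)$ and get maximality by an exchange argument (which is the same device the paper uses in Prop.\ \ref{prop:cardi} for $\fbast$). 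One loose end you flag but do not actually close: in $(2)\Rightarrow(3)$, when one of the witnesses is empty, say $M_G=\emptyset$, your chain ``$M_G,M_H$ are maximum, so $\matchno_k(G\star H)=2|M_G||M_H|=2\matchno_{k_G}(G)\matchno_{k_H}(H)$'' breaks, since $\emptyset$ need not be a maximum $k_G$-matching of $G$; you then have $\matchno_k(G\star H)=0$ and must argue separately that $\matchno_{k_G}(G)\matchno_{k_H}(H)=0$ for some factorization $k=k_Gk_H$ (which follows because otherwise $\fast$ applied to maximum matchings of the factors would produce a non-empty $k$-matching of $G\star H$). The paper's sandwich formulation absorbs this case automatically; with that one observation added, your argument is complete.
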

\begin{proof}
	 We show first that (1) implies (3).
	Assume that  $\matchno_k(G\star H)$ is $\ast$-well-behaved w.r.t.\ $G$ and $H$. 
	Hence, there is a $k$-matching $\fast(M_G,M_H)$ of $G\star H$ such that 
	 $\matchno_k(G\star H)= |\fast(M_G,M_H)| = 2|M_G||M_H|$ where
	 $M_G$ is a $k_G$-matching of $G$ and $M_H$ is a $k_H$-matching of $H$ with $k_G\cdot k_H=k$
	 (cf.\ Prop.\	\ref{prop:matching-ast} where we choose $M_H=M_G=\emptyset$ in case
	  $\matchno_k(G\star H)=0$ and thus, $\fast(M_G,M_H)=\emptyset$). 
	  Clearly, $\matchno_k(G\star H) = 2|M_G||M_H|\leq 2\matchno_{k_G}(G) \matchno_{k_H}(H)$.
	 For a maximum  maximum $k_G$-matching $M^*_G$ of $G$ and a maximum $k_H$-matching $M^*_H$ of $H$ we have
	 $\matchno_k(G\star H) \geq |\fast(M^*_G,M^*_H)| = 2|M^*_G||M^*_H|= 2\matchno_{k_G}(G) \matchno_{k_H}(H)$. 
	 In summary, $\matchno_k(G\star H) =  2\matchno_{k_G}(G) \matchno_{k_H}(H)$.
	 
	 We now show that (3) implies (2). 
	 Assume that $\matchno_k(G\star H) = 2 \matchno_{k_G}(G) \matchno_{k_H}(H)$ for some positive integers $k_G$ and $k_H$
   with $k=k_Gk_H$. Let $M_G$ be a maximum $k_G$-matching of $G$ and $M_H$ be a maximum $k_H$-matching of $H$.
	 Lemma \ref{lem:muFormula} implies that
	 $\matchno_k(G\star H) =  \frac{k}{2}\left(n_{G\star H}-\unmatchno_k(G\star H)\right)$. 
	 By definition, $\matchno_k(G\star H) = 2|M_H||M_G|$. 
	 This together with Lemma \ref{lem:muFormula} and $k=k_Gk_H$ implies that
	 $\matchno_k(G\star H) = 2\frac{k_G}{2}\left(n_{G}-\unmatchno_{k_G}(G)\right)\frac{k_H}{2}\left(n_{H}-\unmatchno_{k_H}(H)\right)
	  = 	\frac{k}{2} \left(n_Gn_H-n_G\unmatchno_{k_H}(H)-n_H\unmatchno_{k_G}(G)+\unmatchno_{k_G}(G)\unmatchno_{k_G}(H) \right)$. 
	  Since $n_{G\star H} = n_Gn_H$, we thus conclude that 
	  $\unmatchno_k(G\star H) = n_G\unmatchno_{k_H}(H)+n_H\unmatchno_{k_G}(G)-\unmatchno_{k_G}(G)\unmatchno_{k_G}(H)$. 
	  
	We finally show that (2) implies (1). If $M_G=\emptyset$ or $M_H=\emptyset$ we obtain
	the empty $k$-matching $\fast(M_G,M_H)$. In any other case,
	Prop.\	\ref{prop:matching-ast} implies that $\fast(M_G,M_H)$ is a non-empty 
	$k$-matching of $G\star H$.
	We must verify that 	 $\matchno_k(G\star H)= |\fast(M_G,M_H)|$ whether or not $\fast(M_G,M_H)$ is empty.
	As argued above, 
	$\matchno_k(G\star H)=	\frac{k}{2}\left(n_{G}n_{H}-\unmatchno_k(G\star H)\right)$
	and since  $\unmatchno_k(G\star H) = n_Gu_H + n_Hu_G - u_Gu_H$, we obtain
	$\matchno_k(G\star H)=	\frac{k}{2}\left(n_{G}n_{H}- n_Gu_H - n_Hu_G + u_Gu_H\right) =
	\frac{k}{2} \left(n_G-u_G\right)\left(n_H-u_H \right)$. This together with
	 Prop.\	\ref{prop:size-ast} implies  $\matchno_k(G\star H)= |\fast(M_G,M_H)|$
	 and, therefore, $\matchno_k(G\star H)$ is $\ast$-well-behaved w.r.t.\ $G$ and $H$. 
\end{proof}


\section{Weak-homomorphism preserving matchings in products}
\label{sec:wh}

We focused here on constructions of $k$-matchings in products that
are determined by some unambiguous predefined rule based on the matchings in the factors.
These rules have not been chosen arbitrarily and satisfy, in particular, the
following natural property of being weak-homomorphism preserving.

\begin{definition}
Let $\star\in \{\dprod, \cprod, \sprod, \lprod\}$. Moreover,
let $G$ and $H$ be two graphs with $M_G\subseteq E_G$ and $M_H\subseteq E_H$.
Let $M$ be any subset  of $E_{G\star H}$ whose construction is
based on $M_G$ and $M_H$ in an arbitrary way.
This constructed set $M$ is
\emph{weak-homomorphism preserving} 
w.r.t.\ $M_G$ and $M_H$
if for all $e\in M$ it holds that
$p_G(e)\in M_G\cup V_G$ and $p_H(e)\in M_H\cup V_H$.

The set $\mathcal{W}_k(G\star H, M_G,M_H)$ denotes the collection of all
$k$-matchings of $G\star H$ that are weak-homomorphism preserving w.r.t.\ $M_G$ and $M_H$.
\end{definition}

In other words, a $k$-matching $M$ that is constructed from information provided by $M_G$ and $M_H$
is weak-homomorphism preserving
if the constructed matched edges of $G\star H$ in $M$  are never mapped
to unmatched edges in the factors $G$ and $H$.
Note, $\mathcal{W}_k(G\star H, M_G,M_H)\neq \emptyset$ for every integer $k\geq 1$,
every product
$\star\in \{\dprod, \cprod, \sprod, \lprod\}$ and all sets $M_G$ and $M_H$,
since $M=\emptyset$ is trivially a weak-homomorphism preserving $k$-matching
for all $G\star H$ and thus, $\emptyset \in \mathcal{W}_k(G\star H, M_G,M_H)$.

Clearly, if $M_G=E_G$ and $M_H=E_H$, then $M\in \mathcal{W}_k(G\star H, M_G,M_H)$
for all positive integers $k$, all $M\subseteq E_{G\star H}$ and $\star\in \{\cprod,\dprod, \sprod,\lprod\}$.
Moreover, it is easy to verify that, by definition, 
$\fbast(M_G,M_H) \in \mathcal{W}_k(G\star H, M_G,M_H)$
for the products $\star\in \{\cprod,\sprod,\lprod\}$,
$\fcast(M_G,M_H) \in \mathcal{W}_k(G\star H, M_G,M_H)$ for the products $\star\in \{\sprod,\lprod\}$,
and $\fast(M_G,M_H)\in \mathcal{W}_k(G\star H, M_G,M_H)$ for the products $\star\in \{\dprod,\sprod,\lprod\}$,
provided that the respective edge-set is a $k$-matching of $G\star H$.
An illustration of $\fcast(M_G,M_H)$ is provided in  Fig.\ \ref{fig:fcast-exmpl}.
Further constructions that do not correspond to either of the sets
$\fbast(M_G,M_H)$, $\fcast(M_G,M_H)$ and $\fast(M_G,M_H)$ but
are weak-homomorphism preserving w.r.t.\ $M_G$ and $M_H$ are shown in Fig.\ \ref{fig:art1} and \ref{fig:strange4M-2}.
An example that is not weak-homomorphism preserving is shown in
Fig.~\ref{fig:inducedMatchings}(right) as well as in Fig.\ \ref{fig:notMaximal}
(the perfect $2$-matching in $G\cprod H$ highlighted the dashed, dashed-dotted and bold lined edges).

\begin{figure}[t]
	\begin{center}
		\includegraphics[width=0.6\textwidth]{./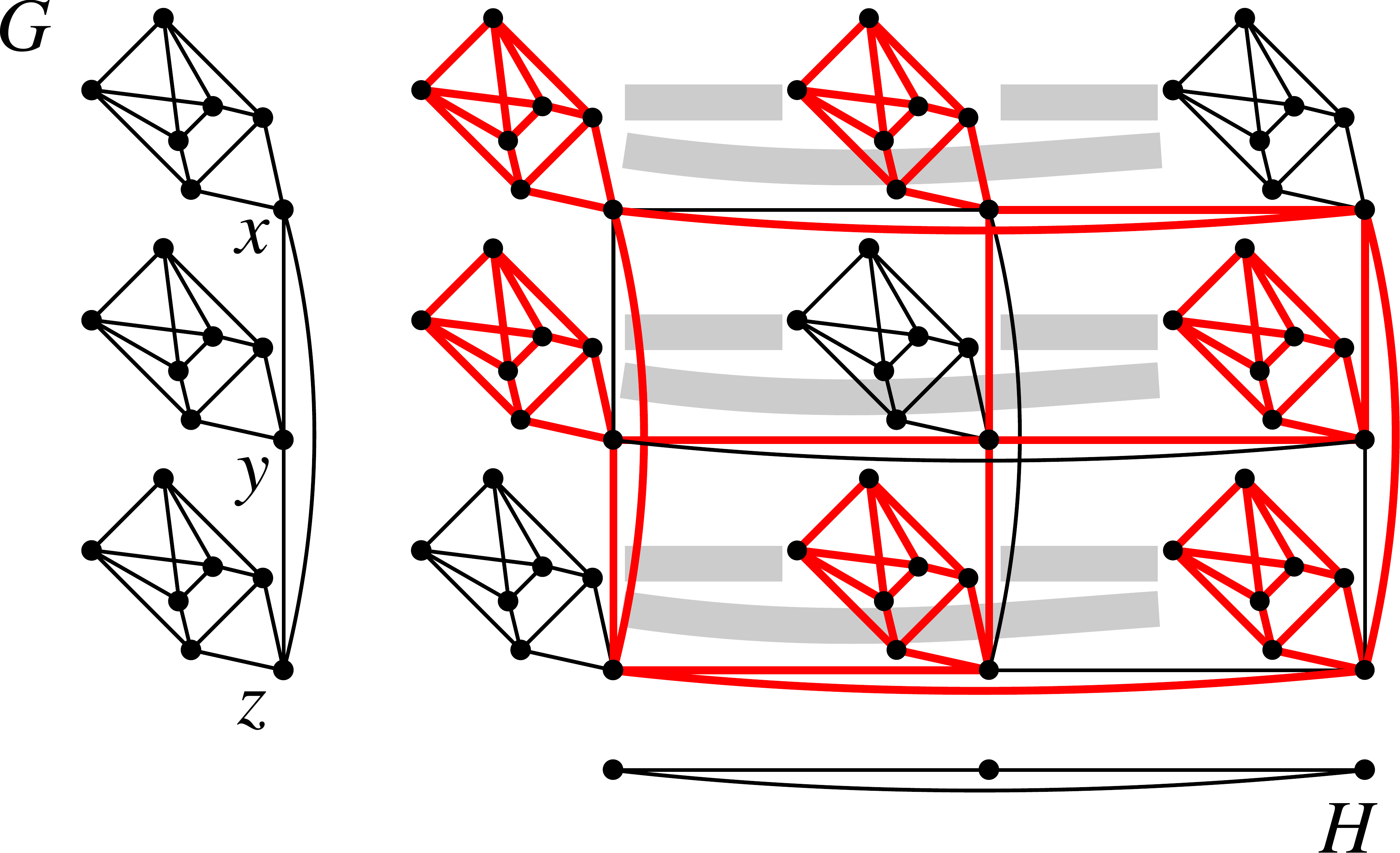}
	\end{center}
	\caption{Shown is the Cartesian product $G\cprod H$ where the copies of $H$ are
	         just sketched (gray lines), except the layers $\lefts{x}{H}$,
	         $\lefts{y}{H}$ and $\lefts{z}{H}$. In this example, $M_G=E_G$ is a perfect
	         $4$-matching of $G$ and $M_H=E_H$ a perfect 2-matching of $H$. The set
	         $M\subseteq E_{G\cprod H}$ consisting of the red-bold edges is a
	         weak-homomorphism preserving $4$-matching of $G\cprod H$ and thus, $M\in
	         \mathcal{W}_4(G\star H,M_G,M_H)$. However, $M$ is not a maximum-sized set among the elements in
	         $\mathcal{W}_4(G\star H,M_G,M_H)$ and $\matchno_4(G\star H)$ is not well-behaved
	         for this particular construction $M$,
	         since $\fbast(M_G,M_H)\in\mathcal{W}_4(G\star H,M_G,M_H)$
	         (which consists of edges the three copies of $G$) yields a perfect 4-matching.
	         Note, $\fbast(M_H,M_G)$ would consist of the edges of all copies of $H$
	         and would yield a perfect 2-matching. In particular, $\fbast(M_H,M_G)\in\mathcal{W}_2(G\star H,M_G,M_H)$.
	         }
		\label{fig:art1}
\end{figure}

\begin{figure}[ht]
	\begin{center}
		\includegraphics[width=0.75\textwidth]{./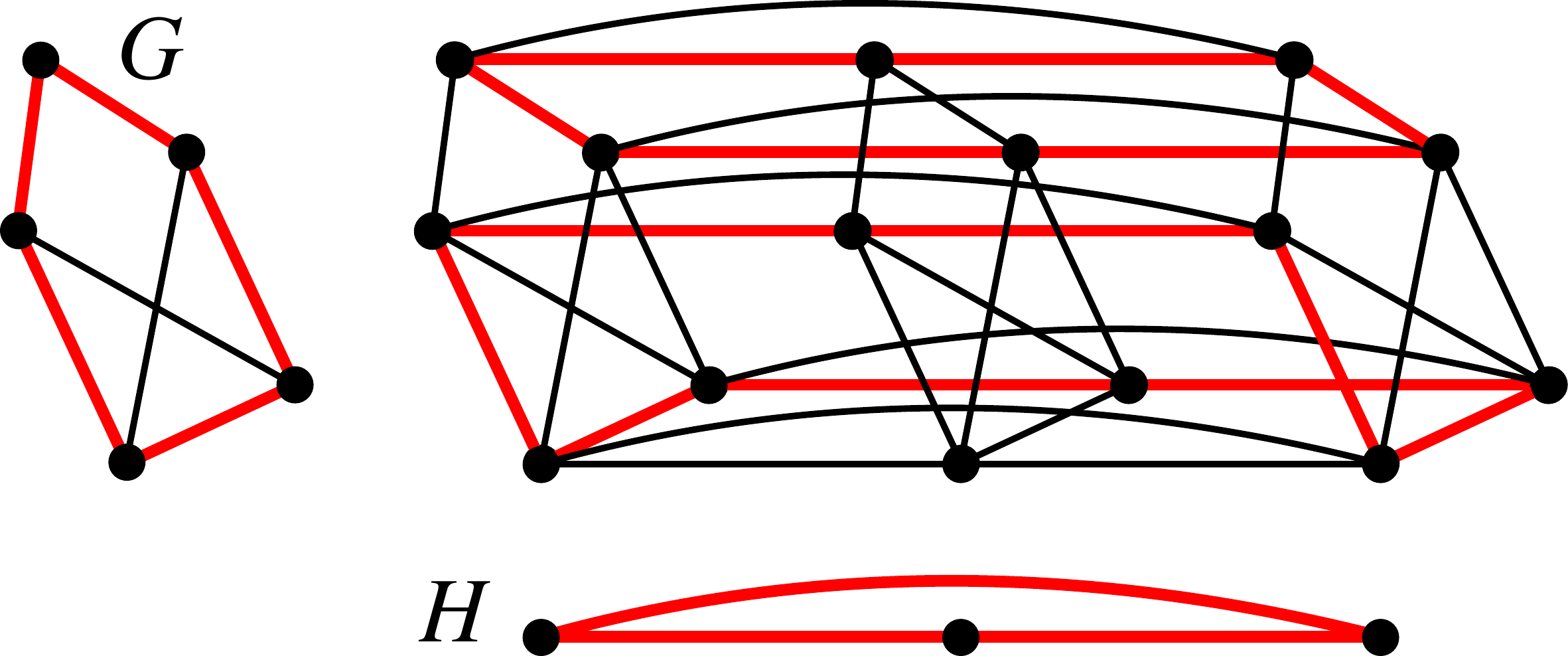}
	\end{center}
	\caption{Shown is the Cartesian product $G\cprod H$ with near-perfect 2-matching $M$ where the two factors 
							 $G$ and $H$ have a perfect 2-matching $M_G$ and $M_H$, respectively. The respective 2-matchings are 
							 highlighted by bold-red edges.
							 It is easy to verify that  $M\in \mathcal{W}_2(G\cprod H, M_G,M_H)$. Moreover, 
							 $\fbast(M_G,M_H)$ consists of the copies of edges in $M_G$ along the $G$-layers and 
							 $\fbast(M_H,M_G)$ consists of the copies of edges in $M_H$ along the $H$-layers.  
							 Since $\fbast(M_G,M_H),\fbast(M_H,M_G)\in  \mathcal{W}_2(G\cprod H, M_G,M_H)$ and 
							$|\fbast(M_G,M_H)| = |\fbast(M_H,M_G)|=	15$ but $|M|=14$, Thm.\ \ref{thm:max-Wk-fbast}
							 implies that $M$ is not a maximum matching. 
	}
		\label{fig:strange4M-2}
\end{figure}

In what follows, we show that $k$-matchings $\fbast(M_G,M_H)$,  $\fcast(M_G,M_H)$ and $\fast(M_G,M_H)$
are always maximum-sized elements in $\mathcal{W}_k(G\star H, M_G,M_H)$ for particular specified products $\star$.
To this end, we need first the following result.

\begin{lemma}\label{lem:gh-unmatch}
	Let $G$ and $H$ be graphs and $M_G\subseteq E_G$ and $M_H\subseteq E_H$.
	If  $g\in V_G$ is $M_G$-unmatched in $G$ and $h\in V_H$ is $M_H$-unmatched in $H$
	then $(g,h) \in V_{G\star H}$ is $M$-unmatched for all $M\in \mathcal{W}_k(G\star H, M_G,M_H)$ with  $\star\in \{\dprod, \cprod, \sprod, \lprod\}$.
	If $(g,h) \in V_{G\star H}$ is $M$-unmatched for all $M\in \mathcal{W}_k(G\star H, M_G,M_H)$ with  $\star\in \{\cprod, \sprod, \lprod\}$, 
	then $g\in V_G$ is $M_G$-unmatched in $G$ and $h\in V_H$ is $M_H$-unmatched in $H$.
\end{lemma}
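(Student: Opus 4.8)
The plan is to prove the two implications separately. The first one will be a short direct argument from the definition of weak-homomorphism preservation alone, handling all four products at once; the second one I would reduce to Lemma~\ref{lem:fbast-gh-unmatch} by producing one convenient element of $\mathcal{W}_k(G\star H,M_G,M_H)$.

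For the first implication, suppose $g$ is $M_G$-unmatched and $h$ is $M_H$-unmatched, fix any $M\in\mathcal{W}_k(G\star H,M_G,M_H)$ with $\star\in\{\dprod,\cprod,\sprod,\lprod\}$, and assume for contradiction that $(g,h)$ is incident to an edge $e=\{(g,h),(g',h')\}\in M$. Since $G\star H$ is loop-free we have $g\neq g'$ or $h\neq h'$. If $g\neq g'$, then $p_G(e)$ is the two-element set $\{g,g'\}$, which is not a vertex, so the requirement $p_G(e)\in M_G\cup V_G$ forces $\{g,g'\}\in M_G$, contradicting that $g$ is $M_G$-unmatched. If $g=g'$, then $h\neq h'$ and the same reasoning applied to $p_H$ gives $\{h,h'\}\in M_H$, contradicting that $h$ is $M_H$-unmatched. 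Hence $(g,h)$ is $M$-unmatched; as the case split on whether $g=g'$ is exhaustive (no product edge is a loop) and $M$ was arbitrary, this settles the first implication uniformly for all $\star\in\{\dprod,\cprod,\sprod,\lprod\}$.

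For the second implication, fix $\star\in\{\cprod,\sprod,\lprod\}$ and suppose $(g,h)$ is $M$-unmatched for every $M\in\mathcal{W}_k(G\star H,M_G,M_H)$. The key point is that $\fbast(M_G,M_H)$ itself lies in $\mathcal{W}_k(G\star H,M_G,M_H)$: reading off Definition~\ref{def:bast}, each edge of $\MB_G$ has $p_G$-image an edge of $M_G$ and $p_H$-image a single vertex, and each edge of $\MU_G$ has $p_G$-image a single vertex and $p_H$-image an edge of $M_H$, so $\fbast(M_G,M_H)$ is weak-homomorphism preserving, and it is a $k$-matching of $G\star H$ by Proposition~\ref{prop:matching-fac}. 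Consequently $(g,h)$ is in particular $\fbast(M_G,M_H)$-unmatched, and the equivalence of items (1) and (2) in Lemma~\ref{lem:fbast-gh-unmatch} yields that $g$ is $M_G$-unmatched in $G$ and $h$ is $M_H$-unmatched in $H$.

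The hard part is the book-keeping hidden in ``$\fbast(M_G,M_H)$ is a $k$-matching of $G\star H$'': by Proposition~\ref{prop:matching-fac} this requires $M_G$ and $M_H$ to be $k$-matchings, or $M_G$ to be a perfect $k$-matching. In the latter case one argues instead that $U_G=\emptyset$, so (taking $M_H=\emptyset$ as in Remark~\ref{rem:perfect-fbast}) $\fbast(M_G,M_H)=\MB_G$ already matches every vertex of $G\star H$, whence the hypothesis on $(g,h)$ cannot hold and the claim is vacuous. One should keep in mind that for genuinely arbitrary $M_G,M_H$ the set $\mathcal{W}_k(G\star H,M_G,M_H)$ can collapse to $\{\emptyset\}$, so the converse implication is only meaningful once $M_G$ and $M_H$ carry enough structure for $\fbast(M_G,M_H)$ to be a bona fide $k$-matching; everything else is immediate from the definition of weak-homomorphism preservation and from Lemma~\ref{lem:fbast-gh-unmatch}.
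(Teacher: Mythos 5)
Your proof is correct and follows the paper's route almost exactly. The forward implication is verbatim the paper's argument: a matched edge at $(g,h)$ would project to an edge of $E_G\setminus M_G$ or of $E_H\setminus M_H$, contradicting weak-homomorphism preservation. For the converse, the paper exhibits the witnesses $\MB_G$ (when $g$ is $M_G$-matched) and $\MU_G$ (when $g\in U_G$ but $h$ is $M_H$-matched) separately and argues directly, whereas you package both into $\fbast(M_G,M_H)$ and delegate to Lemma~\ref{lem:fbast-gh-unmatch}; this is the same idea with slightly different bookkeeping. The caveat you raise at the end is well taken, and in fact it applies verbatim to the paper's own proof: the assertion $\MB_G\in\mathcal{W}_k(G\star H, M_G,M_H)$ presupposes that $\MB_G$ is a $k$-matching of $G\star H$, which fails for arbitrary $M_G\subseteq E_G$. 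Concretely, take $k=3$, $M_G=\{\{g,g'\}\}$ a single edge and $M_H=\emptyset$: every weak-homomorphism preserving edge of $G\star H$ is then of the form $\{(g,h),(g',h)\}$ with $h\in V_H$, and these edges are pairwise vertex disjoint, so no vertex can attain degree $3$ and $\mathcal{W}_3(G\star H,M_G,M_H)=\{\emptyset\}$; thus $(g,h)$ is unmatched by every element of $\mathcal{W}_3(G\star H,M_G,M_H)$ even though $g$ is $M_G$-matched. Hence the converse direction genuinely requires $M_G$ and $M_H$ to be such that $\fbast(M_G,M_H)$ is a $k$-matching (e.g.\ via Proposition~\ref{prop:matching-fac}), a hypothesis that is implicit in all of the lemma's later uses but absent from its statement; your write-up makes this explicit where the paper silently assumes it, which is a point in your favour rather than a defect of your argument.
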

\begin{proof}
	Suppose that $g\in V_G$ is $M_G$-unmatched in $G$ and $h\in V_H$ is $M_H$-unmatched in $H$.
	Assume, for contradiction, that $(g,h)$ is $M$-matched for some $M\in \mathcal{W}_k(G\star H, M_G,M_H)$ with  $\star\in \{\dprod, \cprod, \sprod, \lprod\}$.
	Thus, vertex $(g,h)$ is incident to some edge $e=\{(g,h),(g',h')\} \in M$ (note,  $g\neq g'$ or $h\neq h'$).
	In this case, however, the projection of this edge $e$ onto the factors coincides with the edge $\{g,g'\}\in E_G\setminus M_G$ in $G$
	or $\{h,h'\}\in E_H\setminus M_H$ in $H$.
	Hence, we have mapped a matched edge $e$ in $G\star H$ to an unmatched edge in one of the factors;
	a contradiction to the property of being weak-homomorphism preserving. Therefore, $(g,h)$ must be $M$-unmatched for all $M\in \mathcal{W}_k(G\star H, M_G,M_H)$.

	Suppose now that $(g,h)$  is $M$-unmatched for all $M\in \mathcal{W}_k(G\star H, M_G,M_H)$ with  $\star\in \{\cprod, \sprod, \lprod\}$.
	If $g$ would
	be $M_G$-matched in $G$, then there is an edge $\{g,g'\}\in M_G$. By definition
	$\{(g,h),(g',h)\}\in \MB_G \in \mathcal{W}_k(G\star H, M_G,M_H)$ and thus, $(g,h)$ is not
	$\MB_G$-unmatched in $G\star H$; a contradiction. Assume, for contradiction,
	that $h$ is $M_H$-matched in $H$ and hence, there is an edge $\{h,h'\}\in M_H$. By
	the latter argument, $g$ must $M_G$-unmatched in $G$ and thus, $g\in U_G$. By
	definition, $\{(g,h),(g,h')\}\in \MU_G \in \mathcal{W}_k(G\star H, M_G,M_H)$ and therefore,
	$(g,h)$ is not $\MU_G$-unmatched in $G\star H$; a contradiction.
	Therefore, $g$ is $M_G$-unmatched in $G$ and $h$ is $M_H$-unmatched in $H$.
\end{proof}

Intriguingly, the latter results allow us to determine the size of $k$-matchings
that are maximum among all weak-homomorphism preserving $k$-matchings provided that some
knowledge of the underlying sets $M_G$ and $M_H$ is available, as shown in the following
theorems.

\begin{theorem}\label{thm:max-Wk-fbast}
	Let $\star\in \{\cprod, \sprod, \lprod\}$.
	Let $G$ and $H$ be graphs, $M_G\subseteq E_G$ and $M_H\subseteq E_H$ and $u_G$ be the number of $M_G$-unmatched vertices in $G$
and  $u_H$ be the number of $M_H$-unmatched vertices in $H$. 	

 If $\fbast(M_G,M_H)$ is a $k$-matching of $G\star H$,
	then $\fbast(M_G,M_H)\in \mathcal{W}_k(G\star H, M_G,M_H)$ and
	$|\fbast(M_G,M_H)|\geq |M|$ for all $M\in \mathcal{W}_k(G\star H, M_G,M_H)$.

In particular, if $M_G$ is a perfect $k$-matching of $G$ or
 $M_H$ is a perfect $k$-matching of $H$ or $M_G$ and $M_H$ are $k$-matchings of $G$ and $H$, respectively,
then \[|M^*| =  \frac{k}{2}\left (n_Gn_H-u_Gu_H \right)\]
for every every maximum-sized element $M^*\in \mathcal{W}_k(G\star H, M_G,M_H)$.
\end{theorem}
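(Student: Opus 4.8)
The plan is to prove the two assertions in turn. First, the claim that $\fbast(M_G,M_H)\in \mathcal{W}_k(G\star H, M_G,M_H)$ is immediate from the construction: every edge of $\MB_G$ is of the form $\{(g,h),(g',h)\}$ with $\{g,g'\}\in M_G$, so its $G$-projection is the matched edge $\{g,g'\}\in M_G$ and its $H$-projection is the vertex $h\in V_H$; symmetrically for $\MU_G$, whose edges project to a vertex in $V_G$ and a matched edge in $M_H$. Hence $p_G(e)\in M_G\cup V_G$ and $p_H(e)\in M_H\cup V_H$ for every $e\in \fbast(M_G,M_H)$, which is exactly the weak-homomorphism preserving property; and by hypothesis it is a $k$-matching, so it lies in $\mathcal{W}_k(G\star H, M_G,M_H)$.

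The core of the theorem is the maximality statement. The plan is to bound $|M|$ for an arbitrary $M\in\mathcal{W}_k(G\star H,M_G,M_H)$ using the number of unmatched vertices: by Lemma~\ref{lem:muFormula}, $|M| = \frac{k}{2}(n_Gn_H - u)$ where $u$ is the number of $M$-unmatched vertices in $G\star H$, so $|M|$ is maximized exactly when $u$ is minimized. Now Lemma~\ref{lem:gh-unmatch} tells us that whenever $g$ is $M_G$-unmatched in $G$ and $h$ is $M_H$-unmatched in $H$, the vertex $(g,h)$ is $M$-unmatched for \emph{every} $M\in\mathcal{W}_k(G\star H,M_G,M_H)$; there are exactly $u_Gu_H$ such vertices. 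Therefore $u \ge u_Gu_H$ for every such $M$, giving $|M|\le \frac{k}{2}(n_Gn_H - u_Gu_H)$. On the other hand, when $\fbast(M_G,M_H)$ is a $k$-matching, Prop.~\ref{prop:matching-fac} forces $M_G,M_H$ into one of the two cases there, and Lemma~\ref{lem:fbast-gh-unmatch} shows the set of $\fbast(M_G,M_H)$-unmatched vertices of $G\star H$ is \emph{precisely} $\{(g,h) : g\in U_G,\ h\in U_H\}$, of size $u_Gu_H$; by Lemma~\ref{lem:muFormula} again, $|\fbast(M_G,M_H)| = \frac{k}{2}(n_Gn_H - u_Gu_H)$. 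This meets the upper bound, so $|\fbast(M_G,M_H)|\ge |M|$ for all $M\in\mathcal{W}_k(G\star H,M_G,M_H)$.

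For the final ``in particular'' clause: under any of the three stated hypotheses on $M_G$ and $M_H$ (recalling Remark~\ref{rem:perfect-fbast}, so that a perfect $k$-matching of one factor is paired with $M_H=\emptyset$, resp.\ $M_G=\emptyset$), Prop.~\ref{prop:matching-fac} guarantees that $\fbast(M_G,M_H)$ is indeed a $k$-matching of $G\star H$; hence by the previous paragraph it is a maximum-sized element of $\mathcal{W}_k(G\star H,M_G,M_H)$, and any maximum-sized $M^*$ has $|M^*| = |\fbast(M_G,M_H)| = \frac{k}{2}(n_Gn_H - u_Gu_H)$.

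I expect the only real subtlety to be bookkeeping: making sure the lower-bound direction (that $\fbast(M_G,M_H)$ actually \emph{attains} $u_Gu_H$ unmatched vertices) is justified by citing Lemma~\ref{lem:fbast-gh-unmatch} rather than re-deriving it, and handling the perfect-matching cases cleanly via Remark~\ref{rem:perfect-fbast} so that ``$u_H$'' (resp.\ ``$u_G$'') is unambiguous. No genuinely hard step is anticipated — all the work has been front-loaded into Lemmas~\ref{lem:muFormula}, \ref{lem:fbast-gh-unmatch}, and \ref{lem:gh-unmatch} together with Prop.~\ref{prop:matching-fac}.
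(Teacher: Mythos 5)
Your proposal is correct and follows essentially the same route as the paper: membership in $\mathcal{W}_k$ by inspecting projections, then Lemmas \ref{lem:fbast-gh-unmatch} and \ref{lem:gh-unmatch} to show every $\fbast(M_G,M_H)$-unmatched vertex is forced to be $M$-unmatched, and Lemma \ref{lem:muFormula} to convert the comparison of unmatched-vertex counts into the inequality $|\fbast(M_G,M_H)|\geq |M|$, with Prop.\ \ref{prop:matching-fac} handling the ``in particular'' clause. The only cosmetic difference is that you compute the common value $\frac{k}{2}(n_Gn_H-u_Gu_H)$ directly inside the main argument, whereas the paper keeps the bound abstract ($u'\geq u$) and invokes Prop.\ \ref{prop:maxexprequal} at the end; both are sound.
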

\begin{proof}
	Let $M\in \mathcal{W}_k(G\star H, M_G,M_H)$ and $\star\in \{\cprod, \sprod, \lprod\}$.
	By definition of $\fbast(M_G,M_H)$, we have
	$\fbast(M_G,M_H)\in \mathcal{W}_k(G\star H, M_G,M_H)$.
	By Lemma \ref{lem:fbast-gh-unmatch},
	if $(g,h)$ is an $\fbast(M_G,M_H)$-unmatched vertex in $G\star H$
	then $g$ is $M_G$-unmatched in $G$ and $h$ is $M_H$-unmatched in $H$.
	This together with Lemma \ref{lem:gh-unmatch} implies that
	all $\fbast(M_G,M_H)$-unmatched vertices
	in $G\star H$ are also $M$-unmatched in $G\star H$.
	Therefore, $M$ can only contain edges between $\fbast(M_G,M_H)$-matched vertices.
	Hence, for the number $u'$ of $M$-unmatched vertices and
	$u$ of $\fbast(M_G,M_H)$-unmatched vertices it must hold $u'\geq u$.
	By Lemma \ref{lem:muFormula},
	$|\fbast(M_G,M_H)|= \frac{k}{2} (n_{G\star H}-u) \geq  \frac{k}{2} (n_{G\star H}-u') =|M|$.
	
	Now, let  $M^*$ be any maximum-sized element in $\mathcal{W}_k(G\star H, M_G,M_H)$,
	and assume that 
	$M_G$ is a perfect $k$-matching of $G$ or
 $M_H$ is a perfect $k$-matching of $H$ or $M_G$ and $M_H$ are $k$-matchings of $G$ and $H$, respectively. 
  Hence, Prop.\ \ref{prop:matching-fac} implies that at least one of $\fbast(M_G,M_H)$ and
$\fbast(M_H,M_G)$ is a $k$-matching of $G\star H$. Thus, at least one satisfies $\fbast(M_G,M_H)\in
\mathcal{W}_k(G\star H, M_G,M_H)$ or $\fbast(M_H,M_G)\in \mathcal{W}_k(G\star H,
M_G,M_H)$. By the preceding arguments we have $|M^*| = |\fbast(M_G,M_H)|$ or
$|M^*| = |\fbast(M_H,M_G)|$.  This together with
Prop.\ \ref{prop:maxexprequal} implies that $|M^*| = \frac{k}{2}\left (n_Gn_H-u_Gu_H\right)$
\end{proof}

\begin{theorem}\label{thm:max-Wk-fcast}
	Let $\star\in \{\sprod, \lprod\}$.
	Let $G$ and $H$ be graphs, $M_G\subseteq E_G$ and $M_H\subseteq E_H$ and
	let $u_G$ be the number of $M_G$-unmatched vertices in $G$
	and  $u_H$ be the number of $M_H$-unmatched vertices in $H$. 	
	
	If $\fcast(M_G,M_H)$ is a $k$-matching of $G\star H$,
	then $\fcast(M_G,M_H)\in \mathcal{W}_k(G\star H, M_G,M_H)$ and
	$|\fcast(M_G,M_H)|\geq |M|$ for all $M\in \mathcal{W}_k(G\star H, M_G,M_H)$.
	
	In particular, if $M_G$ and $M_H$ satisfy one of the Conditions (M1)-(M4)
	in Prop.\ \ref{prop:matching-cast},  then 
	\[|M^*| =  \frac{k}{2}\left (n_Gn_H-u_Gu_H \right)\]
  for every every maximum-sized element $M^*\in \mathcal{W}_k(G\star H, M_G,M_H)$.
\end{theorem}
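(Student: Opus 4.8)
The plan is to follow the proof of Theorem \ref{thm:max-Wk-fbast} essentially line by line, replacing $\fbast(M_G,M_H)$ by $\fcast(M_G,M_H)$ and Lemma \ref{lem:fbast-gh-unmatch} by Lemma \ref{lem:fcast-gh-unmatch}. Assume $\fcast(M_G,M_H)$ is a $k$-matching of $G\star H$; as observed earlier in this section it then lies in $\mathcal{W}_k(G\star H, M_G,M_H)$. Fix an arbitrary $M\in \mathcal{W}_k(G\star H, M_G,M_H)$. By Lemma \ref{lem:fcast-gh-unmatch}, every $\fcast(M_G,M_H)$-unmatched vertex $(g,h)$ of $G\star H$ has $g$ being $M_G$-unmatched in $G$ and $h$ being $M_H$-unmatched in $H$; by Lemma \ref{lem:gh-unmatch}, such a vertex $(g,h)$ is then $M$-unmatched as well. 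Hence the set of $M$-unmatched vertices of $G\star H$ contains the set of $\fcast(M_G,M_H)$-unmatched vertices, so writing $u$ for the number of $\fcast(M_G,M_H)$-unmatched and $u'$ for the number of $M$-unmatched vertices we obtain $u'\geq u$. Applying Lemma \ref{lem:muFormula} to the two $k$-matchings $\fcast(M_G,M_H)$ and $M$ of $G\star H$ gives $|\fcast(M_G,M_H)| = \frac{k}{2}(n_{G\star H}-u) \geq \frac{k}{2}(n_{G\star H}-u') = |M|$, which establishes that $\fcast(M_G,M_H)$ has maximum size among all elements of $\mathcal{W}_k(G\star H, M_G,M_H)$.

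For the ``in particular'' assertion, suppose $M_G$ and $M_H$ satisfy one of the Conditions (M1)-(M4). Then Prop.\ \ref{prop:matching-cast} guarantees that $\fcast(M_G,M_H)$ is a $k$-matching of $G\star H$, hence an element of $\mathcal{W}_k(G\star H, M_G,M_H)$ and, by the first part, one of maximum size therein; consequently every maximum-sized element $M^*$ of $\mathcal{W}_k(G\star H, M_G,M_H)$ satisfies $|M^*| = |\fcast(M_G,M_H)|$. Finally, Prop.\ \ref{prop:size-cast}, whose hypothesis that $\fcast(M_G,M_H)$ be a $k$-matching is exactly what (M1)-(M4) secure, evaluates this cardinality to $\frac{k}{2}(n_Gn_H-u_Gu_H)$, yielding the claimed formula. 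The degenerate subcase of (M3) with $M_G=M_H=\emptyset$ is consistent: there $u_G=n_G$ and $u_H=n_H$, the formula returns $0$, and weak-homomorphism preservation forces every $M\in \mathcal{W}_k(G\star H, M_G,M_H)$ to be empty (an edge of $G\star H$ cannot project to a single vertex in both factors).

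I do not expect a substantive obstacle; the argument is a transcription of the proof of Theorem \ref{thm:max-Wk-fbast}. The only points requiring care are that Lemma \ref{lem:fcast-gh-unmatch} and Lemma \ref{lem:gh-unmatch} are invoked in their respective correct directions, namely the former sending ``unmatched in the product'' to ``unmatched in both factors'' and the latter sending ``unmatched in both factors'' to ``unmatched in every weak-homomorphism preserving matching'', and that Prop.\ \ref{prop:size-cast} is used only after (M1)-(M4) have guaranteed, via Prop.\ \ref{prop:matching-cast}, that $\fcast(M_G,M_H)$ is indeed a $k$-matching of $G\star H$.
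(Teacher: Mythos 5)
Your proof matches the paper's almost verbatim: both establish membership in $\mathcal{W}_k(G\star H, M_G,M_H)$ by definition, use Lemma \ref{lem:fcast-gh-unmatch} followed by Lemma \ref{lem:gh-unmatch} to show every $\fcast(M_G,M_H)$-unmatched vertex is $M$-unmatched, conclude $u'\geq u$ and apply Lemma \ref{lem:muFormula}, and then derive the cardinality formula from Prop.\ \ref{prop:matching-cast} and Prop.\ \ref{prop:size-cast}. The argument is correct; your added sanity check of the degenerate $M_G=M_H=\emptyset$ subcase is harmless extra detail not present in the paper.
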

\begin{proof}
	By definition of $\fcast(M_G,M_H)$, we have
	$\fcast(M_G,M_H)\in \mathcal{W}_k(G\star H, M_G,M_H)$ for $\star\in \{\sprod, \lprod\}$.
	
	Lemma\ \ref{lem:fcast-gh-unmatch} implies that for every $\fcast(M_G,M_H)$-unmatched vertex $(g,h)$
	in $G\star H$ the vertex $g$ is $M_G$-unmatched in $G$ and $h$ is $M_H$-unmatched in $H$.
	This together with Lemma \ref{lem:gh-unmatch} implies that
	all $\fcast(M_G,M_H)$-unmatched vertices
	in $G\star H$ are also $M$-unmatched in $G\star H$ for all $M\in \mathcal{W}_k(G\star H, M_G,M_H)$.

	Thus, $M$ can only contain edges between $\fcast(M_G,M_H)$-matched vertices.
	Hence, for the number $u'$ of $M$-unmatched vertices and
	$u$ of $\fcast(M_G,M_H)$-unmatched vertices it must hold $u'\geq u$.
	By Lemma \ref{lem:muFormula},
	$|\fcast(M_G,M_H)| = \frac{k}{2} (n_{G\star H}-u) \geq  \frac{k}{2} (n_{G\star H}-u') =|M|$.

	Now, let  $M^*$ be any maximum-sized element in $\mathcal{W}_k(G\star H, M_G,M_H)$ and 
	assume that $M_G$ and $M_H$ satisfy one of the Conditions (M1)-(M4).
	By Prop.\ \ref{prop:matching-cast}, $\fcast(M_G,M_H)$ is a $k$-matching of $G\star H$.
	Thus, $\fcast(M_G,M_H)\in \mathcal{W}_k(G\star H, M_G,M_H)$ and $|M^*| = |\fcast(M_G,M_H)|$.
	By Prop.\ \ref{prop:size-cast}, $|M^*| = \frac{k}{2}\left (n_Gn_H-u_Gu_H \right)$.
\end{proof}

A result that we already observed in Prop.\ \ref{prop:maxexprequal}
and Prop.\ \ref{prop:size-cast}, follows now directly from Theorem
\ref{thm:max-Wk-fbast} and \ref{thm:max-Wk-fcast}.
\begin{corollary}
	Let $\star\in \{\sprod, \lprod\}$ and
	let $G$ and $H$ be graphs and $M_G\subseteq E_G$ and $M_H\subseteq E_H$. 	If $\fbast(M_G,M_H)$
	and $\fcast(M_G,M_H)$ are $k$-matchings then $|\fbast(M_G,M_H)| = |\fcast(M_G,M_H)|$.
\end{corollary}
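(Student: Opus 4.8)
The plan is to deduce this statement immediately from Theorems~\ref{thm:max-Wk-fbast} and~\ref{thm:max-Wk-fcast}: under the stated hypothesis each of $\fbast(M_G,M_H)$ and $\fcast(M_G,M_H)$ is a maximum-sized element of the \emph{same} collection $\mathcal{W}_k(G\star H, M_G,M_H)$, and hence the two sets must have equal cardinality.

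First I would observe that $\{\sprod,\lprod\}$ is contained both in $\{\cprod,\sprod,\lprod\}$, the product class of Theorem~\ref{thm:max-Wk-fbast}, and in $\{\sprod,\lprod\}$, the product class of Theorem~\ref{thm:max-Wk-fcast}, so for $\star\in\{\sprod,\lprod\}$ both theorems apply. Since $\fbast(M_G,M_H)$ is assumed to be a $k$-matching of $G\star H$, Theorem~\ref{thm:max-Wk-fbast} yields $\fbast(M_G,M_H)\in\mathcal{W}_k(G\star H, M_G,M_H)$ together with $|\fbast(M_G,M_H)|\geq |M|$ for every $M\in\mathcal{W}_k(G\star H, M_G,M_H)$. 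Likewise, since $\fcast(M_G,M_H)$ is assumed to be a $k$-matching of $G\star H$, Theorem~\ref{thm:max-Wk-fcast} gives $\fcast(M_G,M_H)\in\mathcal{W}_k(G\star H, M_G,M_H)$ with $|\fcast(M_G,M_H)|\geq |M|$ for every $M$ in that set. Applying the first inequality with $M=\fcast(M_G,M_H)$ and the second with $M=\fbast(M_G,M_H)$ gives $|\fbast(M_G,M_H)|\geq|\fcast(M_G,M_H)|$ and $|\fcast(M_G,M_H)|\geq|\fbast(M_G,M_H)|$, whence equality.

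There is essentially no obstacle here, as the substantive work has already been carried out in the two theorems (through Lemma~\ref{lem:gh-unmatch} and the unmatched-vertex characterizations in Lemmas~\ref{lem:fbast-gh-unmatch} and~\ref{lem:fcast-gh-unmatch}), so only a one-line ``both are maxima of the same set'' argument remains. The only point that warrants a moment's care is that the corollary's two hypotheses (``$\fbast(M_G,M_H)$ is a $k$-matching'' and ``$\fcast(M_G,M_H)$ is a $k$-matching'') are exactly the activating hypotheses of Theorems~\ref{thm:max-Wk-fbast} and~\ref{thm:max-Wk-fcast}, respectively, so no further compatibility assumption on $M_G$ and $M_H$ needs to be imposed. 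As an alternative route one could instead invoke Prop.~\ref{prop:matching-fac} and Prop.~\ref{prop:matching-cast} to see that $M_G,M_H$ are $k$-matchings (resp.\ satisfy one of (M1)--(M4)) and then read off $|\fbast(M_G,M_H)|=\tfrac{k}{2}(n_Gn_H-u_Gu_H)=|\fcast(M_G,M_H)|$ directly from Prop.~\ref{prop:maxexprequal} and Prop.~\ref{prop:size-cast}; but the argument via $\mathcal{W}_k(G\star H, M_G,M_H)$ is shorter and makes the placement of the corollary in the text transparent.
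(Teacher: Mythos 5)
Your proposal is correct and follows exactly the paper's own argument: both $\fbast(M_G,M_H)$ and $\fcast(M_G,M_H)$ lie in $\mathcal{W}_k(G\star H, M_G,M_H)$ and are maximum-sized there by Theorems~\ref{thm:max-Wk-fbast} and~\ref{thm:max-Wk-fcast}, hence have equal cardinality. Your explicit two-inequality step is just a slightly more detailed rendering of the same reasoning.
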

\begin{proof}
If $\fbast(M_G,M_H)$	and $\fcast(M_G,M_H)$ are $k$-matchings,
then they are both contained in $ \mathcal{W}_k(G\star H, M_G,M_H)$.
By Thm.\ \ref{thm:max-Wk-fbast} and \ref{thm:max-Wk-fcast}, both
$\fbast(M_G,M_H)$	and $\fcast(M_G,M_H)$ are maximum-sized elements in
$\mathcal{W}_k(G\star H, M_G,M_H)$ and thus, must have the same cardinality.
\end{proof}

For the direct product, we provide
\begin{theorem}\label{thm:max-Wk-fast}
	Let $G$ and $H$ be graphs and $M_G\subseteq E_G$ and $M_H\subseteq E_H$
	and let $u_G$ be the number of $M_G$-unmatched vertices in $G$
	and  $u_H$ be the number of $M_H$-unmatched vertices in $H$. 	

	If $\fast(M_G,M_H)$ is a $k$-matching,
	then $\fast(M_G,M_H)\in \mathcal{W}_k(G\dprod H, M_G,M_H)$. 
	Moreover, $M\subseteq \fast(M_G,M_H)$ and thus,
	$|\fast(M_G,M_H)|\geq |M|$ for all $M\in \mathcal{W}_k(G\dprod H, M_G,M_H)$.
	
	In particular, if $M_G$ is a $k_G$-matching of $G$ and $M_H$ is a 
	$k_H$-matching of $H$ such that $k_G\cdot k_H=k$
	then
			 \[|M^*| = \frac{k}{2} \left(n_G-u_G\right)\left(n_H-u_H \right) \]
  for every every maximum-sized element $M^*\in \mathcal{W}_k(G\dprod H, M_G,M_H)$.
\end{theorem}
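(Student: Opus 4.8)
The plan is to exploit what distinguishes the direct product from the other three: $G\dprod H$ has \emph{only} non-Cartesian edges. Every edge of $G\dprod H$ joins vertices $(g,h)$ and $(g',h')$ with $\{g,g'\}\in E_G$ and $\{h,h'\}\in E_H$, hence with $g\neq g'$ and $h\neq h'$. This turns the weak-homomorphism preserving condition into the much stronger statement $M\subseteq\fast(M_G,M_H)$, from which all the assertions will follow at once. First I would record the easy half: if $\fast(M_G,M_H)$ is a $k$-matching, then $\fast(M_G,M_H)\in\mathcal{W}_k(G\dprod H,M_G,M_H)$ directly from Definition \ref{def:ast}, since every edge $\{(g,h),(g',h')\}\in\fast(M_G,M_H)$ satisfies $\{g,g'\}\in M_G$ and $\{h,h'\}\in M_H$, so its two projections lie in $M_G$ and $M_H$ respectively.

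The key step is the set inclusion. Let $M\in\mathcal{W}_k(G\dprod H,M_G,M_H)$ and take an arbitrary edge $e=\{(g,h),(g',h')\}\in M$. By the adjacency rule for the direct product we have $\{g,g'\}\in E_G$ and $\{h,h'\}\in E_H$, in particular $g\neq g'$ and $h\neq h'$, so $p_G(e)=\{g,g'\}$ is a genuine $2$-element edge of $G$ and $p_H(e)=\{h,h'\}$ is a genuine edge of $H$ (neither projection collapses to a vertex). Since $M$ is weak-homomorphism preserving, $p_G(e)\in M_G\cup V_G$; as $p_G(e)$ is not a single vertex, this forces $\{g,g'\}\in M_G$, and symmetrically $\{h,h'\}\in M_H$. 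By Definition \ref{def:ast}, $e\in\fast(M_G,M_H)$. As $e$ was arbitrary, $M\subseteq\fast(M_G,M_H)$ and therefore $|M|\leq|\fast(M_G,M_H)|$ for all $M\in\mathcal{W}_k(G\dprod H,M_G,M_H)$.

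For the ``in particular'' part, assume $M_G$ is a $k_G$-matching of $G$ and $M_H$ is a $k_H$-matching of $H$ with $k_Gk_H=k$. By Proposition \ref{prop:matching-ast} the set $\fast(M_G,M_H)$ is then a $k$-matching of $G\dprod H$ (non-empty when both $M_G$ and $M_H$ are non-empty, and empty otherwise), so $\fast(M_G,M_H)\in\mathcal{W}_k(G\dprod H,M_G,M_H)$; combined with the inclusion just proved, $\fast(M_G,M_H)$ is a maximum-sized element of $\mathcal{W}_k(G\dprod H,M_G,M_H)$, whence $|M^*|=|\fast(M_G,M_H)|$ for every maximum-sized $M^*$. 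Proposition \ref{prop:size-ast} then gives $|\fast(M_G,M_H)|=\frac{k}{2}(n_G-u_G)(n_H-u_H)$, the claimed value. I do not expect a genuine obstacle here: the whole argument is driven by the single observation that ``no Cartesian edges'' upgrades weak-homomorphism preservation into an outright subset relation. The only point needing a word of care is the degenerate case where $\fast(M_G,M_H)=\emptyset$ (one of $M_G,M_H$ empty), but then the inclusion forces $\mathcal{W}_k(G\dprod H,M_G,M_H)=\{\emptyset\}$ and the cardinality formula reduces to the trivial identity $0=\frac{k}{2}(n_G-u_G)(n_H-u_H)$ already handled in Proposition \ref{prop:size-ast}.
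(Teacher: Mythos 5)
Your proof is correct and follows essentially the same route as the paper's: both establish $M\subseteq\fast(M_G,M_H)$ for every $M\in\mathcal{W}_k(G\dprod H,M_G,M_H)$ by observing that edges of the direct product project to genuine edges of the factors, so weak-homomorphism preservation forces the projections into $M_G$ and $M_H$, and both then invoke Propositions \ref{prop:matching-ast} and \ref{prop:size-ast} for the cardinality claim. Your explicit handling of the degenerate case $\fast(M_G,M_H)=\emptyset$ matches the paper's treatment as well.
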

\begin{proof}
	By definition of $\fast(M_G,M_H)$, we have
	$\fast(M_G,M_H)\in \mathcal{W}_k(G\dprod H, M_G,M_H)$.
	It suffices show that $M\subseteq \fast(M_G,M_H)$ for all 
	$M\in \mathcal{W}_k(G\dprod H, M_G,M_H)$.
	Let $\{(g,h),(g',h')\} \in M \in \mathcal{W}_k(G\dprod H, M_G,M_H)$.
	Since $M\subseteq E_{G\dprod H}$ it must hold that
	$\{g,g'\}\in E_G$ and $\{h,h'\}\in E_H$. 
	Since $M$ is weak-homomorphism preserving w.r.t.\ $M_G$ and $M_H$, it follows that
	$\{g,g'\}\in M_G$ and $\{h,h'\}\in M_H$. By definition, 
  $\{(g,h),(g',h')\}\in \fast(M_G,M_H)$ and thus,  $M\subseteq \fast(M_G,M_H)$ 
  which implies 	$|\fast(M_G,M_H)|\geq |M|$ for all $M\in \mathcal{W}_k(G\dprod H, M_G,M_H)$.
  
	Now, let  $M^*$ be any maximum-sized element in $\mathcal{W}_k(G\dprod H, M_G,M_H)$ and 
	assume that $M_G$ is a $k_G$-matching of $G$ and $M_H$ is a 
	$k_H$-matching of $H$ such that $k_G\cdot k_H=k$. 
	 If $M_G=\emptyset$ or $M_H=\emptyset$ we obtain
	the empty $k$-matching $\fast(M_G,M_H)$. In any other case,
  Prop.\ \ref{prop:matching-ast} implies that $\fast(M_G,M_H)$ is a $k$-matching of $G\dprod H$. 
  Thus, $\fast(M_G,M_H)\in \mathcal{W}_k(G\dprod H, M_G,M_H)$ and $|M^*| = |\fast(M_G,M_H)|$.
  By Prop.\ \ref{prop:size-ast}, $|M^*| = \frac{k}{2} \left(n_G-u_G\right)\left(n_H-u_H \right)$.
\end{proof}

We finally remark that maximum-sized elements in  $\mathcal{W}_k(G\star H, M_G,M_H)$ 
   are not necessarily maximum $k$-matchings of $G\star H$. 
   \begin{example}
		Consider the product $G\cprod H$ as shown in Fig.\ \ref{fig:inducedMatchings}. 
		The $1$-matchings $\fbast(M_G,M_H)$ and $\fbast(M_H,M_G)$ are, by Thm.\ \ref{thm:max-Wk-fbast}, 
		maximum-sized elements in $\mathcal{W}_k(G\cprod H, M_G,M_H)$. However, they are not
		perfect, although a perfect $1$-matching exists. Thus, $\fbast(M_G,M_H)$ and $\fbast(M_H,M_G)$
		are not maximum $1$-matching of   $G\cprod H$.
   \end{example}
   
   Moreover, although $\mathcal{W}_k(G\star H, M_G,M_H)$ may contain maximum $k$-matchings 
   of $G\star H$ there could be further maximum $k$-matchings   of $G\star H$ that do no 
   correspond to our constructions but are weak-homomorphism preserving. 
   \begin{example}
		Consider the product $G\cprod H$ as shown in Fig.\ \ref{fig:exmpl2-new}. 
		The $1$-matchings $\fbast(M_G,M_H), \fbast(M_H,M_G)\in \mathcal{W}_k(G\cprod H, M_G,M_H)$ are
		maximum $1$-matchings of $G\cprod H$. A further maximum $1$-matching of 
		$G\cprod H$ that is not contained in $\mathcal{W}_k(G\cprod H, M_G,M_H)$
		is shown in Fig.\ \ref{fig:exmpl2-new} (3rd from left).
   \end{example}

\section{Construction of $\boldsymbol{1}$-matchings}
\label{sec:1m}

Based on the previous findings, we collect here results that particularly hold for $1$-matchings.

Note, $k$-matchings $\fbast(M_G,M_H)$ or $\fbast(M_H,M_G)$ are in general not
maximal $k$-matchings of $G \star H$ (see Fig.\ \ref{fig:notMaximal}); a case that
cannot happen for $1$-matchings $\fbast(M_G,M_H)$ and $\fbast(M_H,M_G)$, provided
that $M_G$ and $M_H$ are maximal $1$-matchings.
\begin{lemma}\label{lem:max-1-fbast}
	Let $M_G$ and $M_H$ be maximal $1$-matchings of $G$ and $H$, respectively. Then,
	$\fbast(M_G,M_H)$ and $\fbast(M_H,M_G)$ are maximal $1$-matchings in $G\star H$,
	$\star\in\{\cprod,\sprod,\lprod\}$.
\end{lemma}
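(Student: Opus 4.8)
The plan is to show that $\fbast(M_G,M_H)$ is both a $1$-matching and a maximal one; the statement for $\fbast(M_H,M_G)$ then follows by the symmetric argument. First I would invoke Corollary~\ref{cor:matching-fac} (or Proposition~\ref{prop:matching-fac}(2)): since $M_G$ and $M_H$ are $1$-matchings of $G$ and $H$, both $\fbast(M_G,M_H)$ and $\fbast(M_H,M_G)$ are $1$-matchings of $G\star H$ for $\star\in\{\cprod,\sprod,\lprod\}$. So it remains to prove maximality, i.e.\ that no edge $e\in E_{G\star H}\setminus\fbast(M_G,M_H)$ can be added while keeping the property that every vertex has degree $0$ or $1$ in the resulting edge set. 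Equivalently, by Lemma~\ref{lem:fbast-gh-unmatch}, the set $U$ of $\fbast(M_G,M_H)$-unmatched vertices of $G\star H$ is exactly $U_G\times U_H$, where $U_G$ and $U_H$ are the $M_G$- and $M_H$-unmatched vertices; maximality of a $1$-matching just means that no edge of $G\star H$ has \emph{both} endpoints in $U$ (since for $1$-matchings, adding an edge is possible precisely when both its endpoints are currently unmatched).

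So the key step is: for $\star\in\{\cprod,\sprod,\lprod\}$, there is no edge of $G\star H$ joining two vertices of $U_G\times U_H$. Take two distinct vertices $(g,h),(g',h')\in U_G\times U_H$ that are adjacent in $G\star H$. In each of the three products, adjacency forces $\{g,g'\}\in E_G$ or $g=g'$, and (for the Cartesian/strong case) $\{h,h'\}\in E_H$ or $h=h'$; in the lexicographic case $\{g,g'\}\in E_G$ or ($g=g'$ and $\{h,h'\}\in E_H$). Since $g,g'\in U_G$ are $M_G$-unmatched and $M_G$ is a \emph{maximal} $1$-matching, we cannot have $\{g,g'\}\in E_G$ — otherwise $M_G\cup\{\{g,g'\}\}$ would be a strictly larger $1$-matching of $G$, contradicting maximality. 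Hence $g=g'$. By the same argument applied to $h,h'\in U_H$ and the maximal $1$-matching $M_H$, we get $h=h'$. But then $(g,h)=(g',h')$, contradicting distinctness. Therefore no such edge exists, and $\fbast(M_G,M_H)$ is a maximal $1$-matching of $G\star H$.

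The symmetric argument (with the roles of $G$ and $H$ interchanged, which is legitimate since $\fbast(M_H,M_G)$ is defined by the same construction with swapped factors) shows $\fbast(M_H,M_G)$ is a maximal $1$-matching as well. The main obstacle — really the only point requiring care — is handling the three product types uniformly in the adjacency analysis: one must check that in each of $\cprod$, $\sprod$, $\lprod$, an edge between two vertices $(g,h)$ and $(g',h')$ forces at least one of $\{g,g'\}\in E_G$ (when $g\ne g'$) or $\{h,h'\}\in E_H$ (when $h\ne h'$), so that maximality of $M_G$ or of $M_H$ can be brought to bear. This is immediate from the edge definitions recalled in Section~\ref{sec:prelim}: non-Cartesian edges of $\sprod$ require $\{g,g'\}\in E_G$ \emph{and} $\{h,h'\}\in E_H$, Cartesian edges require exactly one coordinate to move along an edge, and in $\lprod$ any edge with $g\ne g'$ still needs $\{g,g'\}\in E_G$. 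Since $g,g'$ and $h,h'$ are all unmatched by maximal $1$-matchings, both coordinates are forced to be equal, giving the contradiction.
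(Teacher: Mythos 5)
Your proposal is correct and follows essentially the same route as the paper's proof: establish the $1$-matching property via Corollary~\ref{cor:matching-fac}, identify the unmatched vertices of the product via Lemma~\ref{lem:fbast-gh-unmatch}, and then observe that any product edge between two such vertices would force $\{g,g'\}\in E_G$ or $\{h,h'\}\in E_H$, contradicting the maximality of $M_G$ or $M_H$. The only cosmetic difference is that you phrase maximality as the non-existence of an edge with both endpoints unmatched, while the paper assumes a strictly larger $1$-matching and picks an edge from the difference; these are equivalent.
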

\begin{proof}
Let $\star\in\{\cprod,\sprod,\lprod\}$ and $M_G$ and $M_H$ be maximal $1$-matchings
of $G$ and $H$, respectively. By Cor.\ \ref{cor:matching-fac}, both $\fbast(M_G,M_H)$ and $\fbast(M_H,M_G)$ are
$1$-matchings of $G \star H$. 
	
Assume now, for contradiction, that $\fbast(M_G,M_H)$
is not a maximal $1$-matching of $G\star H$. Thus, there is a $1$-matching $M'$ of
$G\star H$ that satisfies $\fbast(M_G,M_H)\subsetneq M'$. Let $\{(g,h),(g',h')\}\in
M'\setminus (\fbast(M_G,M_H))$.
Hence, both vertices $(g,h)$ and $(g',h')$ are $\fbast(M_G,M_H)$-unmatched 
in $G\star H$. Since $\{(g,h),(g',h')\}\in E_{G\star H}$, 
it must hold $\{g,g'\}\in E_G$ or $\{h,h'\}\in E_H$ for every product $\star\in\{\cprod,\sprod,\lprod\}$.
Assume first that $\{g,g'\}\in E_G$. Lemma \ref{lem:fbast-gh-unmatch} implies that $g$ and $g'$
are $M_G$-unmatched in $G$. Thus, $M_G\cup \{\{g,g'\}\}$ is still a 1-matching
of $G$; a contradiction to the maximality of $M_G$. 
By similar arguments, the case $\{h,h'\}\in E_H$ yields a contradiction.
Therefore, $\fbast(M_G,M_H)$ must be a maximal $1$-matching of $G\star H$.

Similarly, $\fbast(M_H,M_G)$ must be a maximal $1$-matching of $G\star H$.
\end{proof}

In general, 1-matchings $\fast(M_G,M_H)$ are not maximal 1-matchings of $G\sprod H$ or $G\lprod H$, even if
both $M_G$ and $M_H$ are maximal 1-matchings of $G$ and $H$, respectively (see Fig. \ref{fig:exmpl-dprod} (right)). 
Nevertheless for the direct product, we obtain
\begin{lemma}
	Let $G$ and $H$ be graphs with non-empty edge set. 
	If $M_G$, resp., $M_H$ is a maximal $1$-matchings of $G$, resp., $H$, then
	$\fast(M_G,M_H)$ is a maximal $1$-matching of $G\dprod H$.
\end{lemma}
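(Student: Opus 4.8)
The plan is to follow the template of the proof of Lemma~\ref{lem:max-1-fbast}. Since $G$ and $H$ have non-empty edge sets and $M_G$, $M_H$ are maximal $1$-matchings, neither $M_G$ nor $M_H$ can be empty, because adjoining any single edge to the empty set already produces a strictly larger $1$-matching. Hence, by Proposition~\ref{prop:matching-ast} applied with $k_G=k_H=1$, the set $\fast(M_G,M_H)$ is a non-empty $1$-matching of $G\dprod H$. Before turning to maximality I would record two facts. First, a vertex $(g,h)\in V_{G\dprod H}$ is incident to an edge of $\fast(M_G,M_H)$ if and only if $g$ is $M_G$-matched in $G$ \emph{and} $h$ is $M_H$-matched in $H$: the ``only if'' is immediate from the definition of $\fast$, and the ``if'' holds because a partner $\{g,\bar{g}\}\in M_G$ together with a partner $\{h,\bar{h}\}\in M_H$ yields $\{(g,h),(\bar{g},\bar{h})\}\in\fast(M_G,M_H)$. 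Thus $(g,h)$ is $\fast(M_G,M_H)$-unmatched precisely when $g\in U_G$ or $h\in U_H$. Second, maximality of $M_G$ (resp.\ $M_H$) forces $U_G$ (resp.\ $U_H$) to be an independent set of $G$ (resp.\ $H$), since any edge with both endpoints in $U_G$ could be added to $M_G$.

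Next I would suppose, for contradiction, that $\fast(M_G,M_H)$ is not a maximal $1$-matching of $G\dprod H$, fix a $1$-matching $M'$ with $\fast(M_G,M_H)\subsetneq M'$, and pick an edge $e=\{(g,h),(g',h')\}\in M'\setminus\fast(M_G,M_H)$. As $M'$ is a $1$-matching containing $\fast(M_G,M_H)$, both endpoints of $e$ must be $\fast(M_G,M_H)$-unmatched, so ($g\in U_G$ or $h\in U_H$) and ($g'\in U_G$ or $h'\in U_H$); moreover, the definition of $\dprod$ gives $\{g,g'\}\in E_G$ and $\{h,h'\}\in E_H$, with $g\neq g'$ and $h\neq h'$. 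If $g,g'\in U_G$, then $M_G\cup\{\{g,g'\}\}$ is a strictly larger $1$-matching of $G$, contradicting maximality of $M_G$; symmetrically, $h,h'\in U_H$ contradicts maximality of $M_H$. Invoking independence of $U_G$ and $U_H$, the only remaining possibility, after relabelling the two endpoints of $e$ if necessary, is the ``mixed'' configuration: $g\in U_G$ and $h'\in U_H$, while $g'$ is $M_G$-matched and $h$ is $M_H$-matched.

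The mixed case is the step I expect to be the main obstacle, since here neither $\{g,g'\}$ can be appended to $M_G$ (its endpoint $g'$ is already covered) nor $\{h,h'\}$ to $M_H$, so the contradiction does not come from the simple ``extend a factor matching'' move used above. My approach would be to exploit the $\fast$-edges that are forced into $M'$ around $e$: as $g'$ is $M_G$-matched and $h$ is $M_H$-matched, the vertex $(g',h)$ is $\fast(M_G,M_H)$-matched, hence covered by some $f\in\fast(M_G,M_H)\subseteq M'$, and one would try to chain $e$ with $f$ (and the analogous $\fast$-edge at the other ``mixed'' corner) to expose either a vertex of $M'$-degree greater than $1$, or an augmenting-path swap that projects to a $1$-matching of $G$ or of $H$ strictly larger than $M_G$ or $M_H$. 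A clean subcase where the mixed configuration cannot arise at all is when one factor has a perfect $1$-matching (then $U_G=\emptyset$ or $U_H=\emptyset$, and the ``mixed'' endpoints cannot exist); I would handle the general mixed case with particular care, as it is the one place where this proof departs from the $\fbast$ argument and where an additional structural hypothesis on the factors might turn out to be needed.
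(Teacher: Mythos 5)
Your case analysis is sound, and your suspicion about the ``mixed'' configuration is exactly right: it is not just the hard step, it is a genuine obstruction that cannot be repaired by chaining $\fast$-edges or by an augmenting-path swap, because the statement as given is false. Take $G=H$ to be the path on three vertices, say $V_G=\{1,2,3\}$, $E_G=\{\{1,2\},\{2,3\}\}$, $M_G=\{\{1,2\}\}$ and $V_H=\{a,b,c\}$, $E_H=\{\{a,b\},\{b,c\}\}$, $M_H=\{\{a,b\}\}$; both are maximal $1$-matchings of factors with non-empty edge sets. Then $\fast(M_G,M_H)=\{\{(1,a),(2,b)\},\{(1,b),(2,a)\}\}$, while $\{(2,c),(3,b)\}$ is an edge of $G\dprod H$ (since $\{2,3\}\in E_G$ and $\{b,c\}\in E_H$) whose two endpoints are $\fast(M_G,M_H)$-unmatched. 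Hence $\fast(M_G,M_H)\cup\{\{(2,c),(3,b)\}\}$ is a strictly larger $1$-matching, so $\fast(M_G,M_H)$ is not maximal. This added edge realizes precisely your mixed configuration: $3\in U_G$ and $c\in U_H$, while $2$ is $M_G$-matched and $b$ is $M_H$-matched.

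For comparison, the paper's own proof falls into exactly the trap you flagged. It first relabels the endpoints of the offending edge so that $g$ is $M_G$-matched, and later writes ``w.l.o.g.\ assume that $h$ is $M_H$-matched''; but the freedom to swap the two endpoints has already been spent on the first relabelling, so the case in which $g$ is matched, $h$ is unmatched, $g'$ is unmatched and $h'$ is matched is silently dropped --- and that is the case the example above exhibits. Your proposal is therefore incomplete only because no completion exists; a correct variant of the lemma needs an additional hypothesis that excludes the mixed case, e.g.\ that at least one of $M_G$ and $M_H$ is a perfect $1$-matching (then $U_G=\emptyset$ or $U_H=\emptyset$, the mixed configuration cannot occur, and the remaining cases of your argument go through).
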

\begin{proof}
 Let $M_G$ and $M_H$ be a maximal $1$-matchings of $G$ and $H$, respectively. 
 Since both $G$ and $H$ contain edges, $M_G\neq \emptyset$ and $M_H\neq \emptyset$. 
 By Prop.\ \ref{prop:matching-ast}, $\fast(M_G,M_H)$ is a non-empty 
 $1$-matching of $G\dprod H$.
 
 Assume now, for contradiction, that $\fast(M_G,M_H)$
 is not a maximal $1$-matching of $G\dprod H$. Thus, there is a $1$-matching $M'$ of
 $G\dprod H$ that satisfies $\fbast(M_G,M_H)\subsetneq M'$. Let $\{(g,h),(g',h')\}\in
 M'\setminus (\fbast(M_G,M_H))$.
 Hence, both vertices $(g,h)$ and $(g',h')$ are $\fast(M_G,M_H)$-unmatched 
 in $G\dprod H$. Since $\{(g,h),(g',h')\}\in E_{G\dprod H}$, 
 we have $\{g, g'\}\in E_G$ and $\{h, h'\}\in E_H$.
 By construction of $\fast(M_G,M_H)$, we have $\{g, g'\}\notin M_G$ or $\{h, h'\}\notin M_H$.
 W.l.o.g.\ assume that $\{g, g'\}\notin M_G$. Since $M_G$ is a maximal $1$-matching of $G$, 
 $g$ or $g'$ must be $M_G$-matched in $G$. W.l.o.g.\ assume that 
 $g$ is $M_G$-matched in $G$. Hence, there is an edge $\{g,g''\}\in M_G$. 
 If  $\{h, h'\}\in M_H$, then $\{(g,h),(g'',h')\}\in \fast(M_G,M_H)$ and thus, 
 $(g,h)$ is not $\fast(M_G,M_H)$-unmatched; a contradiction. 
 Therefore,	$\{h, h'\}\notin M_H$. 
 Since $M_H$ is a maximal $1$-matching of $H$, 
 $h$ or $h'$ must be $M_H$-matched in $H$. W.l.o.g.\ assume that 
 $h$ is $M_H$-matched in $H$. Hence, there is an edge $\{h,h''\}\in M_H$.
 Now,  $\{g,g''\}\in M_G$ and $\{h,h''\}\in M_H$ imply that 
  $\{(g,h),(g'',h'')\}\in \fast(M_G,M_H)$ and thus, 
 $(g,h)$ is not $\fast(M_G,M_H)$-unmatched; a contradiction. 
 Therefore, $\fast(M_G,M_H)$ is  a maximal $1$-matching of $G\dprod H$.
\end{proof}

Similar as for $\fbast(M_G,M_H)$, a $k$-matching $\fcast(M_G,M_H)$ is in general not
a maximal $k$-matching of $G \star H$; a case that
cannot happen for $1$-matchings $\fcast(M_G,M_H)$, provided
that $M_G$ and $M_H$ are maximal $1$-matchings.

\begin{lemma}
	Let $M_G$ and $M_H$ be maximal $1$-matchings of $G$ and $H$, respectively. Then,
	$\fcast(M_G,M_H)$ is a maximal $1$-matching in $G\star H$,	$\star\in\{\sprod,\lprod\}$.
\end{lemma}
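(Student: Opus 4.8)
The plan is to follow the proof of Lemma~\ref{lem:max-1-fbast} essentially verbatim, substituting Lemma~\ref{lem:fcast-gh-unmatch} for Lemma~\ref{lem:fbast-gh-unmatch}. First I would record that $M_G$ and $M_H$ are in particular $1$-matchings of $G$ and $H$, so Case (M3) of Prop.~\ref{prop:matching-cast} applies with $k=1$; hence $\fcast(M_G,M_H)$ is a (possibly empty) $1$-matching of $G\star H$ for every $\star\in\{\sprod,\lprod\}$, and it remains only to establish maximality.

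For maximality I would argue by contradiction: suppose there is a $1$-matching $M'$ of $G\star H$ with $\fcast(M_G,M_H)\subsetneq M'$, and fix an edge $e=\{(g,h),(g',h')\}\in M'\setminus\fcast(M_G,M_H)$. Since $M'$ is a $1$-matching and $e\in M'$, neither endpoint of $e$ can be incident to another edge of $\fcast(M_G,M_H)$; that is, both $(g,h)$ and $(g',h')$ are $\fcast(M_G,M_H)$-unmatched. Lemma~\ref{lem:fcast-gh-unmatch} then forces $g$ and $g'$ to be $M_G$-unmatched in $G$ and $h$ and $h'$ to be $M_H$-unmatched in $H$.

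Next I would split according to how $(g,h)$ and $(g',h')$ can be adjacent in $\star\in\{\sprod,\lprod\}$. In both products every edge satisfies $\{g,g'\}\in E_G$, or else $g=g'$ and $\{h,h'\}\in E_H$ (in particular the ``diagonal'' case~(iii) of the strong product also has $\{g,g'\}\in E_G$, so no third case is needed). If $\{g,g'\}\in E_G$ then $g\neq g'$ and both are $M_G$-unmatched, so $M_G\cup\{\{g,g'\}\}$ is again a $1$-matching of $G$, contradicting maximality of $M_G$. If instead $g=g'$ and $\{h,h'\}\in E_H$, then $h\neq h'$ are both $M_H$-unmatched, so $M_H\cup\{\{h,h'\}\}$ contradicts maximality of $M_H$. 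Either way we reach a contradiction, so $\fcast(M_G,M_H)$ is maximal.

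The argument is routine once Lemma~\ref{lem:fcast-gh-unmatch} is available; the only point requiring care — and the thing I expect to be the main (minor) obstacle — is checking that the edge relation of both the strong and the lexicographic product really collapses to the two listed cases, so that $e$ projects into the factor graphs in a way that can be absorbed into $M_G$ or $M_H$. It is precisely the restriction to $k=1$ that makes this work: appending a single edge between two previously unmatched vertices can never raise a degree above $1$, whereas for general $k$ the constructed matching need not even be maximal (cf.\ Fig.~\ref{fig:notMaximal}).
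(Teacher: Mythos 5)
Your proposal is correct and matches the paper's own proof essentially step for step: both invoke Prop.~\ref{prop:matching-cast}(M3) to get that $\fcast(M_G,M_H)$ is a $1$-matching, then argue by contradiction that an edge addable to it would, via Lemma~\ref{lem:fcast-gh-unmatch}, join two vertices whose projections are unmatched in the factors, contradicting maximality of $M_G$ or $M_H$. Your explicit check that every edge of $G\sprod H$ and $G\lprod H$ falls into one of the two cases $\{g,g'\}\in E_G$ or ($g=g'$ and $\{h,h'\}\in E_H$) is a welcome bit of extra care that the paper leaves implicit.
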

\begin{proof}
Let $\star\in\{\sprod,\lprod\}$ and $M_G$ and $M_H$ be maximal $1$-matchings
of $G$ and $H$, respectively. By Prop.\ \ref{prop:matching-cast}(M3), 
$\fcast(M_G,M_H)$ is a $1$-matchings of $G \star H$. 

Assume now, for contradiction, that $\fcast(M_G,M_H)$
is not a maximal $1$-matching of $G\star H$. Thus, there is a $1$-matching $M'$ of
$G\star H$ that satisfies $\fcast(M_G,M_H)\subsetneq M'$. Let $\{(g,h),(g',h')\}\in
M'\setminus (\fcast(M_G,M_H))$.
Hence, both vertices $(g,h)$ and $(g',h')$ are $\fcast(M_G,M_H)$-unmatched 
in $G\star H$. 
Since $\{(g,h),(g',h')\}\in E_{G\star H}$, 
it must hold $\{g,g'\}\in E_G$ or $\{h,h'\}\in E_H$ for every product $\star\in\{\sprod,\lprod\}$.
Assume first that $\{g,g'\}\in E_G$.
Lemma \ref{lem:fcast-gh-unmatch} implies that $g$ and $g'$
are $M_G$-unmatched in $G$. Thus, $M_G\cup \{\{g,g'\}\}$ is still a 1-matching
of $G$; a contradiction to the maximality of $M_G$. 
By similar arguments, the case $\{h,h'\}\in E_G$ yields a contradiction.
Therefore, $\fcast(M_G,M_H)$ must be a maximal $1$-matching of $G\star H$.
\end{proof}

\begin{figure}[t]
	\begin{center}
		\includegraphics[width=0.95\textwidth]{./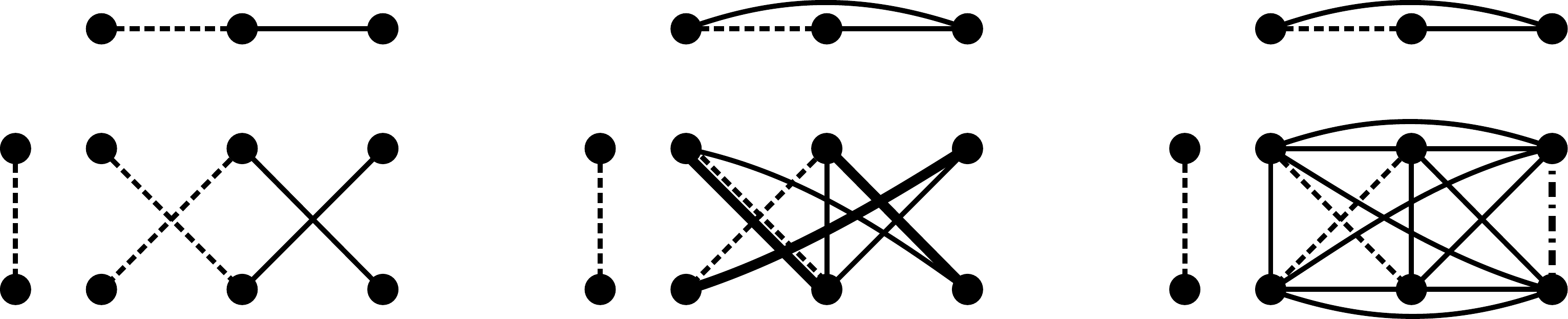}
	\end{center}
	\caption{\emph{Left panel:} Shown is the direct product 
	$K_2\dprod P_3$ together with the maximum $1$-matching $\fast(M_{K_2},M_{P_3})$
	highlighted by dashes edges.
	\emph{Middle panel:} Shown is the direct product 
	$K_2\dprod K_3$ together with the $1$-matching $\fast(M_{K_2},M_{K_3})$
	highlighted by dashed edges. In this example, $\fast(M_{K_2},M_{K_3})$
	is a maximal $1$-matching but not maximum. A maximum $1$-matching 
	of $K_2\dprod K_3$ is highlighted by bold-lined edges.
	\emph{Right panel:} Shown is the strong and lexicographic product 
	$K_2\sprod K_3\simeq K_2\lprod K_3$ together with the $1$-matching $\fast(M_{K_2},M_{K_3})$
	highlighted by dashed edges. The $1$-matching $\fcast(M_{K_2},M_{K_3})$ consists of
		$\fast(M_{K_2},M_{K_3})$ together with the bold dashed-dotted edge and is a maximum $1$-matching of the product.
	}
		\label{fig:exmpl-dprod}
\end{figure}

We characterize now $\ast$-well-behaved $1$-matchings for strong and lexicographic products.

\begin{proposition}\label{prop:1-m-fast}
	Let $\star\in\{\sprod,\lprod\}$. Then, $\matchno_1(G\star H)$ is $\ast$-well-behaved
	w.r.t.\ $G$ and $H$ if and only if $\matchno_1(G\star H)$ is
	$\circledast$-well-behaved w.r.t.\ $G$ and $H$ and $\MU_G=\MU_H=\emptyset$
	where $\MU_G$ and $\MU_H$ are defined in terms of a $1$-matching 
		$M_G$ of $G$ and	$M_H$   of $H$ that satisfy $\matchno_1(G\star H) = |\fcast(M_{G}, M_{H})|$.
\end{proposition}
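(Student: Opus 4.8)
The plan is to reduce both implications to the cardinality identity $|\fcast(M_G,M_H)| = |\fast(M_G,M_H)| + |\MU_G| + |\MU_H|$, which holds because the three sets are pairwise vertex disjoint by Lemma~\ref{lem:ast-sets-disjoint}, together with two observations specific to $k=1$: first, condition (M3) of Proposition~\ref{prop:matching-cast} reads simply ``$M_G$ and $M_H$ are $1$-matchings'', so $\fcast(M_G,M_H)$ is automatically a $1$-matching of $G\star H$ whenever $M_G$ and $M_H$ are $1$-matchings of $G$ and $H$; second, $\fcast(M_G,M_H)=\fast(M_G,M_H)$ exactly when $\MU_G=\MU_H=\emptyset$. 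I would first dispose of the degenerate case $\matchno_1(G\star H)=0$: for $\star\in\{\sprod,\lprod\}$ this forces $E_{G\star H}=\emptyset$ and hence $E_G=E_H=\emptyset$, so the only admissible sets are $M_G=M_H=\emptyset$, for which $\fast(M_G,M_H)$, $\fcast(M_G,M_H)$, $\MU_G$ and $\MU_H$ are all empty and both sides of the claimed equivalence hold vacuously. So from now on assume $\matchno_1(G\star H)\ge 1$.

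For the ``if'' direction, suppose $\matchno_1(G\star H)$ is $\circledast$-well-behaved and $\MU_G=\MU_H=\emptyset$, witnessed by $1$-matchings $M_G$ of $G$ and $M_H$ of $H$ with $\matchno_1(G\star H)=|\fcast(M_G,M_H)|$. By the definition of $\circledast$-well-behavedness $\fcast(M_G,M_H)$ is a $1$-matching of $G\star H$, and $\MU_G=\MU_H=\emptyset$ gives $\fcast(M_G,M_H)=\fast(M_G,M_H)$. Hence $\fast(M_G,M_H)$ is a $1$-matching of $G\star H$ of size $\matchno_1(G\star H)$, so $\matchno_1(G\star H)$ is $\ast$-well-behaved w.r.t.\ $G$ and $H$.

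For the ``only if'' direction, suppose $\matchno_1(G\star H)$ is $\ast$-well-behaved, so some $1$-matching $\fast(M_G,M_H)$ of $G\star H$ satisfies $\matchno_1(G\star H)=|\fast(M_G,M_H)|$; by the running assumption $\matchno_1(G\star H)\ge 1$ this set is non-empty, so Proposition~\ref{prop:matching-ast} (with $k_G\cdot k_H=1$, forcing $k_G=k_H=1$) shows that $M_G$ and $M_H$ are non-empty $1$-matchings of $G$ and $H$. Then $\fcast(M_G,M_H)$ is a $1$-matching of $G\star H$ by Proposition~\ref{prop:matching-cast}(M3), and the cardinality identity yields $|\fcast(M_G,M_H)| = \matchno_1(G\star H) + |\MU_G| + |\MU_H| \ge \matchno_1(G\star H)$. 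Since $\matchno_1(G\star H)$ is the maximum size of a $1$-matching of $G\star H$, equality must hold, whence $|\MU_G|=|\MU_H|=0$ and $\matchno_1(G\star H)=|\fcast(M_G,M_H)|$; in particular $\matchno_1(G\star H)$ is $\circledast$-well-behaved, witnessed by a pair $M_G,M_H$ for which $\MU_G=\MU_H=\emptyset$, as required.

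The only delicate point is this last direction: one must verify that the sets $M_G,M_H$ supplied by $\ast$-well-behavedness really are $1$-matchings — which is exactly what makes $\fcast(M_G,M_H)$ a legitimate $1$-matching of $G\star H$ of size at least $\matchno_1(G\star H)$ — and one must separately handle the case $\matchno_1(G\star H)=0$, where $\ast$-well-behavedness can formally be witnessed by sets that are not $1$-matchings but whose $\fast$ is the empty $1$-matching. Beyond this, the argument is pure bookkeeping with the cardinality identity and the maximality of $\matchno_1(G\star H)$, so I expect no serious obstacle.
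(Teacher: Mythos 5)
Your proposal is correct and follows essentially the same route as the paper's proof: handle the empty case, use Proposition~\ref{prop:matching-ast} to force $M_G$ and $M_H$ to be non-empty $1$-matchings, invoke (M3) to get that $\fcast(M_G,M_H)$ is a $1$-matching, and then compare $|\fcast(M_G,M_H)|$ with $|\fast(M_G,M_H)|=\matchno_1(G\star H)$ via the disjoint decomposition to conclude $\MU_G=\MU_H=\emptyset$. Your explicit use of the cardinality identity from Lemma~\ref{lem:ast-sets-disjoint} is just a slightly more spelled-out version of the paper's inequality argument.
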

\begin{proof}
	For the \emph{if}-direction observe that 
	if $\matchno_1(G\star H)$ is $\circledast$-well-behaved w.r.t.\ $G$ and $H$ and $\MU_G=\MU_H=\emptyset$,
	then there is a $1$-matching $\fcast(M_G,M_H)$ of $G\star H$
	such that  $\fcast(M_G,M_H) = \fast(M_G,M_H)$.  Thus, 
	$\matchno_1(G\star H) = |\fcast(M_G,M_H)| = |\fast(M_G,M_H)|$
	which implies that  $\matchno_1(G\star H)$ is $\ast$-well-behaved
	w.r.t.\ $G$ and $H$.

	For the \emph{only-if}-direction,
	assume that $\matchno_1(G\star H)$ is $\ast$-well-behaved w.r.t.\ $G$ and $H$. By
	definition, there is a $1$-matching $\fast(M_{G}, M_{H})$ of $G\star H$ such that $\matchno_1(G\star H)=|\fast(M_{G}, M_{H})|$.
	If it is empty, then $\matchno_1(G\star H)=0$, which means that $G\star H = (V_{G\star H}, \emptyset)$,
	otherwise we could at least have one edge in a maximum $1$-matching. Hence $\fcast(M_{G}, M_{H})=\emptyset$
	as well, and the statement is vacuously true.
	
	In the case $\fast(M_{G}, M_{H})$ is a non-empty $1$-matching, Prop.\ \ref{prop:matching-ast}
	implies that
	$M_G$ must be a non-empty $1$-matching of $G$ and
	$M_H$  a non-empty $1$-matching of $H$, such that $\matchno_k(G\star H) = |\fast(M_{G}, M_{H})|$.
	In this case, Prop.\ \ref{prop:matching-cast}(M3) is satisfied. Therefore, $\fcast(M_{G}, M_{H})$ is a
	$1$-matching of $G\star H$. Since
	$\fcast(M_{G}, M_{H}) = \fast(M_{G}, M_{H}) \cup \MU_G \cup \MU_H$ and
	$|\fast(M_{G}, M_{H})|= \matchno_k(G\star H) \geq |\fcast(M_{G}, M_{H})|$
	we obtain $\MU_G=\MU_H=\emptyset$ and
	$\fcast(M_{G}, M_{H}) = \fast(M_{G}, M_{H})$, which implies that
	$\matchno_k(G\star H) = |\fcast(M_{G}, M_{H})|$. Thus, $\matchno_1(G\star H)$ is
	$\circledast$-well-behaved w.r.t.\ $G$ and $H$ and $\MU_G=\MU_H=\emptyset$.
\end{proof}

We show now that maximum $1$-matchings $\fast(M_{G}, M_{H})$  of strong and lexicographic products
must always be perfect.
\begin{theorem}\label{thm:fast-1-matching}
	Let $\star\in\{\sprod,\lprod\}$. Let $G$ and $H$ be graphs
	with non-empty edge set and let $M_G\subseteq E_G$ and $M_H\subseteq E_H$. 
	The following statements are equivalent.
	\begin{enumerate}[itemsep=0.02em] 
	\item 	$\fast(M_{G}, M_{H})$ is a maximum $1$-matching of $G\star H$
					and thus, $\matchno_1(G\star H)$ is $\ast$-well-behaved
					w.r.t.\ $G$ and $H$.
	\item 	$M_G$ is a perfect $1$-matching of $G$
					and $M_H$ is a perfect $1$-matching of $H$.
	\item $\fast(M_{G}, M_{H})$ is a perfect $1$-matching of $G\star H$.
\end{enumerate}
\end{theorem}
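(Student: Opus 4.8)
The plan is to prove the three equivalences through the cycle $(2)\Rightarrow(3)\Rightarrow(1)\Rightarrow(2)$, with the bulk of the work lying in the last implication.

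For $(2)\Rightarrow(3)$: assume $M_G$ and $M_H$ are perfect $1$-matchings. Since $E_G$ and $E_H$ are non-empty, $M_G$ and $M_H$ are non-empty, so Prop.~\ref{prop:matching-ast} (with $k_G=k_H=1$) gives that $\fast(M_G,M_H)$ is a non-empty $1$-matching of $G\star H$. Perfectness of the factors means $u_G=u_H=0$, so Prop.~\ref{prop:size-ast} yields $|\fast(M_G,M_H)|=\tfrac12 n_Gn_H=\tfrac12 n_{G\star H}$; by Lemma~\ref{lem:muFormula} this is exactly the size of a perfect $1$-matching of $G\star H$, hence $\fast(M_G,M_H)$ is perfect. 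For $(3)\Rightarrow(1)$: a perfect $1$-matching has size $\tfrac12|V_{G\star H}|$, which by Lemma~\ref{lem:muFormula} is the largest possible size of any $1$-matching; thus $\fast(M_G,M_H)$ is a maximum $1$-matching and $\matchno_1(G\star H)=|\fast(M_G,M_H)|$, i.e.\ $\matchno_1(G\star H)$ is $\ast$-well-behaved w.r.t.\ $G$ and $H$.

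The substantial step is $(1)\Rightarrow(2)$. Suppose $\fast(M_G,M_H)$ is a maximum $1$-matching of $G\star H$. Because $G$ and $H$ have edges, so does $G\star H$, whence $\matchno_1(G\star H)\geq 1$ and $\fast(M_G,M_H)\neq\emptyset$; then Prop.~\ref{prop:matching-ast}, together with the fact that $k_Gk_H=1$ forces $k_G=k_H=1$, shows that $M_G$ and $M_H$ are non-empty $1$-matchings of $G$ and $H$. I then argue by contradiction that they are perfect: if, say, $g$ is $M_G$-unmatched, then every vertex $(g,y)$ of the $H$-layer $\lefts{g}{H}$ is $\fast(M_G,M_H)$-unmatched, since each edge of $\fast(M_G,M_H)$ has both coordinates matched. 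As $\lefts{g}{H}\simeq H$ has a non-empty edge set, it contains an edge $e=\{(g,h),(g,h')\}$, which is a Cartesian edge of $G\star H$ (type (ii), present for both $\sprod$ and $\lprod$) and therefore not contained in $\fast(M_G,M_H)$, which consists of non-Cartesian edges only. Adding $e$ raises the degrees of $(g,h)$ and $(g,h')$ from $0$ to $1$ and changes nothing else, so $\fast(M_G,M_H)\cup\{e\}$ is a strictly larger $1$-matching, contradicting maximality. Hence $M_G$ is perfect, and a symmetric argument using a $G$-layer $G^h$ for an $M_H$-unmatched vertex $h$ shows $M_H$ is perfect, giving $(2)$.

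The main obstacle is precisely this augmentation argument in $(1)\Rightarrow(2)$: one must check carefully that an unmatched coordinate in a factor forces an entire layer to be $\fast(M_G,M_H)$-unmatched in the product, and that the augmenting edge genuinely exists in $E_{G\star H}$ for both products under consideration — it is the Cartesian/non-Cartesian dichotomy that simultaneously supplies the new edge and makes the enlargement strict.
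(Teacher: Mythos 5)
Your proof is correct, and it follows the same cyclic scheme $(1)\Rightarrow(2)\Rightarrow(3)\Rightarrow(1)$ as the paper; the steps $(2)\Rightarrow(3)$ and $(3)\Rightarrow(1)$ coincide with the paper's (size count via Prop.~\ref{prop:size-ast} and Lemma~\ref{lem:muFormula}, then the trivial observation that perfect implies maximum). The genuine difference is in $(1)\Rightarrow(2)$. The paper deduces perfectness of $M_G$ and $M_H$ by invoking Prop.~\ref{prop:1-m-fast}: $\ast$-well-behavedness forces $\circledast$-well-behavedness with $\MU_G=\MU_H=\emptyset$, and since $M_G,M_H\neq\emptyset$ this is equivalent to $u_G=u_H=0$. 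You instead give a direct augmentation argument: an $M_G$-unmatched vertex $g$ leaves the entire layer $\lefts{g}{H}$ untouched by $\fast(M_G,M_H)$ (every edge of $\fast(M_G,M_H)$ has all four coordinates matched), the layer contains a Cartesian edge because $E_H\neq\emptyset$, that edge is necessarily outside $\fast(M_G,M_H)$ (which consists of non-Cartesian edges only), and adding it produces a strictly larger $1$-matching, contradicting maximality. Your route is more self-contained and elementary — it avoids the detour through the $\fcast$-machinery and the comparison $|\fast(M_G,M_H)|\geq|\fcast(M_G,M_H)|$ hidden inside Prop.~\ref{prop:1-m-fast} — while the paper's version buys economy by reusing results it has already established. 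Both are valid; your augmentation step is essentially a localized re-proof of the content of Prop.~\ref{prop:1-m-fast} in the case at hand. One cosmetic remark: in $(2)\Rightarrow(3)$ the non-emptiness of $M_G$ and $M_H$ follows already from perfectness together with the standing assumption that vertex sets are non-empty, rather than from $E_G,E_H\neq\emptyset$; this does not affect correctness.
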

\begin{proof}
  Assume that $\fast(M_{G}, M_{H})$ is a maximum $1$-matching. 
  Since both $G$ and $H$ have non-empty edge set, 
	$G\star H$ contains edges and thus,  $\fast(M_{G}, M_{H})\neq \emptyset$
	and $\matchno_1(G\star H) = |\fast(M_{G}, M_{H})|$.
	Prop.\ \ref{prop:matching-ast} implies that $M_G$
  is non-empty $1$-matching of $G$ and $M_H$ is 
  a non-empty $1$-matching of $H$. 
  In addition, Prop.\ \ref{prop:1-m-fast} implies that $\MU_G=\MU_H=\emptyset$.
  Since $M_G\neq \emptyset$ and $M_H\neq \emptyset$
  we have $\MU_G=\MU_H=\emptyset$ if and only if
  $u_G=u_H=0$ with 
   $u_G$ being the number of $M_G$-unmatched vertices in $G$
  and $u_H$ the number of $M_H$-unmatched vertices in $H$. 
  Hence, $M_G$ and $M_H$ must be perfect $1$-matchings. 
  Therefore, (1) implies (2).
  
  Now assume that $M_G$ is a perfect $1$-matching of $G$
	and $M_H$ a perfect $1$-matching of $H$. Thus $u_G=u_H=0$
	and 	Prop.\ \ref{prop:size-ast}
	implies that $|\fast(M_G,M_H)| = \frac{1}{2} |V_G||V_H| = \frac{1}{2} |V_{G\star H}|$, 
	which is, by Lemma \ref{lem:muFormula}, the size of a perfect 1-matching
	of $G\star H$.   Therefore, (2) implies (3).

	Finally, assume that $\fast(M_{G}, M_{H})$ is a perfect $1$-matching of $G\star H$.
	Thus, $\fast(M_{G}, M_{H})$ is, in particular,  
	a maximum $1$-matching of $G\star H$. 
	Since, in this case, $\matchno_1(G\star H) = |\fast(M_{G}, M_{H})|$, 
	we can conclude that $\matchno_1(G\star H)$ is $\ast$-well-behaved
	w.r.t.\ $G$ and $H$. Therefore, (3) implies (1).
\end{proof}

Similar results as in Thm.\ \ref{thm:fast-1-matching} for the direct product do not hold. 
By way of example, the graph $K_2\dprod P_3$ in Fig. \ref{fig:exmpl-dprod} (left) has 
a maximum $1$-matching $\fast(M_{K_2},M_{P_3})$ although only one factor has a perfect $1$-matching, 
Moreover, the existence of a  perfect $1$-matching in one of the factors is not sufficient
to conclude that $\fast(M_{G},M_{H})$ is a maximum $1$-matching in a direct product. 
To see this, consider the the graph $K_2\dprod K_3$ in Fig. \ref{fig:exmpl-dprod} (middle). 
In this example one factor has a perfect $1$-matching although $\fast(M_{K_2},M_{K_4})$ is not 
a maximum $1$-matching of $K_2\dprod K_3$.

For the construction of maximum $1$-matchings in strong and lexicographic 
products it does not matter whether we use  $\fcast(M_G,M_H)$ or $\fbast(M_G,M_H)$
as long as one of them is a maximum $1$-matching in the product.
\begin{proposition}\label{prop:maxfacbst}
	Let $\star\in\{\sprod,\lprod\}$. Let $G$ and $H$ be graphs
	with non-empty edge set and let $M_G\subseteq E_G$ and $M_H\subseteq E_H$. 
	The following statements are equivalent.
	\begin{enumerate}[itemsep=0.02em] 
	\item  $\fcast(M_G,M_H)$ is maximum $1$-matching of $G\star H$.
	\item $\fbast(M_G,M_H)$ is maximum $1$-matching of $G\star H$. 
	\item $\fbast(M_H,M_G)$ is maximum $1$-matching of $G\star H$. 
	\end{enumerate}
\end{proposition}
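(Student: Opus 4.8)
The plan is to prove the chain of equivalences by showing that, as soon as \emph{any one} of the three edge sets $\fcast(M_G,M_H)$, $\fbast(M_G,M_H)$, $\fbast(M_H,M_G)$ is a $1$-matching of $G\star H$, then all three are, and moreover they all have the same cardinality; the equivalence then follows at once, since ``being a maximum $1$-matching'' just means ``being a $1$-matching of largest possible size''. As a starting observation, since $G$ and $H$ have non-empty edge sets the product $G\star H$ has an edge, so $\matchno_1(G\star H)\geq 1$ and every maximum $1$-matching of $G\star H$ is non-empty; hence under each of (1)--(3) the relevant set is a non-empty $1$-matching of $G\star H$.

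Next I would reduce to the situation in which $M_G$ and $M_H$ are themselves $1$-matchings. If $\fcast(M_G,M_H)$ is a $1$-matching, then by Prop.\ \ref{prop:matching-cast} one of the conditions (M1)--(M4) holds, and specialising each of them to $k=1$ shows in every case that $M_G$ is a $1$-matching of $G$ and $M_H$ is a $1$-matching of $H$. If instead $\fbast(M_G,M_H)$ (resp.\ $\fbast(M_H,M_G)$) is a $1$-matching, then by Prop.\ \ref{prop:matching-fac} either $M_G$ and $M_H$ are both $1$-matchings, or $M_G$ (resp.\ $M_H$) is a perfect $1$-matching; in the latter case the convention of Remark \ref{rem:perfect-fbast} lets us take $M_H=\emptyset$ (resp.\ $M_G=\emptyset$), and then a direct computation from Definitions \ref{def:bast} and \ref{def:cast} shows that $\MU_G=\MB_H=\fast(M_G,M_H)=\emptyset$ and $\MU_H=\MB_G$, so that $\fcast(M_G,M_H)=\fbast(M_G,M_H)=\fbast(M_H,M_G)=\MB_G$ (and symmetrically everything collapses to $\MB_H$ in the mirror case); thus (1), (2) and (3) hold simultaneously and the equivalence is trivial there. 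Hence in all remaining situations I may assume that $M_G$ is a $1$-matching of $G$ and $M_H$ is a $1$-matching of $H$.

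Once $M_G$ and $M_H$ are $1$-matchings, Cor.\ \ref{cor:matching-fac} gives that $\fbast(M_G,M_H)$ and $\fbast(M_H,M_G)$ are both $1$-matchings of $G\star H$, while Prop.\ \ref{prop:matching-cast}(M3) gives that $\fcast(M_G,M_H)$ is a $1$-matching of $G\star H$. Writing $u_G$ and $u_H$ for the numbers of $M_G$- and $M_H$-unmatched vertices, Prop.\ \ref{prop:maxexprequal} yields $|\fbast(M_G,M_H)|=|\fbast(M_H,M_G)|=\tfrac{1}{2}\left(n_Gn_H-u_Gu_H\right)$, and Prop.\ \ref{prop:size-cast} yields $|\fcast(M_G,M_H)|=\tfrac{1}{2}\left(n_Gn_H-u_Gu_H\right)$ as well. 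So the three sets are $1$-matchings of $G\star H$ of equal size, whence one of them has size $\matchno_1(G\star H)$ if and only if each of them does; this establishes (1)$\Leftrightarrow$(2)$\Leftrightarrow$(3).

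The only point requiring real care is the reduction step above: verifying that, in the edge case where one of the factors carries a perfect $1$-matching, the three constructions genuinely coincide (with $\MB_G$, resp.\ $\MB_H$), so that no case is lost by assuming from the outset that $M_G$ and $M_H$ are $1$-matchings. Everything else is a routine appeal to the cardinality formulas of Prop.\ \ref{prop:maxexprequal} and Prop.\ \ref{prop:size-cast}.
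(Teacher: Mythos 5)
Your proposal is correct. It follows the same overall skeleton as the paper's proof --- reduce via Prop.~\ref{prop:matching-cast} and Prop.~\ref{prop:matching-fac} (plus Remark~\ref{rem:perfect-fbast}) to the case where $M_G$ and $M_H$ are $1$-matchings, then show all three sets are $1$-matchings of equal cardinality --- but it justifies the equal-cardinality step differently. The paper invokes Theorems~\ref{thm:max-Wk-fbast} and~\ref{thm:max-Wk-fcast} to conclude that $\fcast(M_G,M_H)$, $\fbast(M_G,M_H)$ and $\fbast(M_H,M_G)$ are all maximum-sized elements of $\mathcal{W}_1(G\star H, M_G,M_H)$ and hence equinumerous, whereas you compute $|\fbast(M_G,M_H)|=|\fbast(M_H,M_G)|=|\fcast(M_G,M_H)|=\tfrac12\left(n_Gn_H-u_Gu_H\right)$ directly from the counting formulas of Prop.~\ref{prop:maxexprequal} and Prop.~\ref{prop:size-cast}. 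Your route is slightly more elementary, bypassing the weak-homomorphism-preserving machinery of Section~\ref{sec:wh} entirely; the paper's route emphasizes the conceptual point that all three constructions are extremal within $\mathcal{W}_1$. You are also more explicit than the paper on the degenerate case where one factor carries a perfect $1$-matching: your verification that all three constructions collapse to $\MB_G$ (resp.\ $\MB_H$) when $M_H=\emptyset$ (resp.\ $M_G=\emptyset$) fills in the step the paper dismisses as ``easy to verify'' in its treatment of the equivalence of (2) and (3).
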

\begin{proof}
		Assume that $\fcast(M_G,M_H)$ is maximum $1$-matching of $G\star H$.
		Prop.\ \ref{prop:matching-cast}(M3) implies that $M_G$ and $M_H$ are
		1-matchings of $G$ and $H$, respectively. 
		By Cor.\ \ref{cor:matching-fac}, $\fbast(M_G,M_H)$ 
		and  $\fbast(M_H,M_G)$ are $1$-matchings of	 $G\star H$. 
		By Thm.\ \ref{thm:max-Wk-fbast} and \ref{thm:max-Wk-fcast}, 
		$\fcast(M_G,M_H)$, $\fbast(M_G,M_H)$ and
		$\fbast(M_H,M_G)$ are contained in $\mathcal{W}_1(G\star H, M_G,M_H)$
		and are, in particular, maximum-sized elements of $\mathcal{W}_1(G\star H, M_G,M_H)$.
		Consequently, $|\fcast(M_G,M_H)| = |\fbast(M_G,M_H)|=|\fbast(M_H,M_G)|$
		and thus, $\fbast(M_G,M_H)$ and $\fbast(M_H,M_G)$ are maximum $1$-matchings of $G\star H$. 
		Hence, (1) implies (2) and (3). 
		
		The equivalence between (2) and (3) is easy to verify by utilizing 
		Prop.\ \ref{prop:matching-fac} and Remark \ref{rem:perfect-fbast}
		together with Prop.\ \ref{prop:cardi}. 
		
		We finally show that (2) implies (1). Thus, assume that 
		$\fbast(M_G,M_H)$ is maximum $1$-matching of $G\star H$. 
		By Prop.\ \ref{prop:matching-fac} and Remark \ref{rem:perfect-fbast},
		it holds that $M_G$ is a perfect 1-matching of $G$ and $M_H=\emptyset$
		or $M_G$ and $M_H$ are 1-matchings of $G$ and $H$, respectively. In both cases,  
		Prop.\ \ref{prop:matching-cast}(M3) is satisfied and, therefore, 
		$\fcast(M_G,M_H)$ is a $1$-matching of $G\star H$.
		As argued above, $\fcast(M_G,M_H)$ and $\fbast(M_G,M_H)$ are maximum-sized elements 
		in $\mathcal{W}_1(G\star H, M_G,M_H)$ and thus,
		$|\fcast(M_G,M_H)| = |\fbast(M_G,M_H)|$. Hence, 
		$\fcast(M_G,M_H)$ is a maximum $1$-matchings of $G\star H$. 
\end{proof}

For general integer $k>1$, Prop.\ \ref{prop:maxfacbst} is not always satisfied. 
By way of example, consider non-empty perfect $k$-matchings  $M_G$ and $M_H$ of $G$ and $H$, respectively. 
In this case, $\fbast(M_G,M_H)$ is a $k$-matching of $G\star H$ while 
 $\fcast(M_G,M_H)$ is $k^2$-matching of $G\star H$.
This, in particular, implies that $\matchno_{k^2}(G\star H)$ could be 
$\circledast$-well-behaved  w.r.t.\ $G$ and $H$ although it is not 
				$\boxast$-well-behaved  w.r.t.\ $G$ and $H$ as $G$ and $H$
				may have only empty $k^2$-matchings.

\begin{corollary}\label{cor:fast-implies-fcbast}
	Let $\star\in\{\sprod,\lprod\}$. Let $G$ and $H$ be graphs
	with non-empty edge set and let $M_G\subseteq E_G$ and $M_H\subseteq E_H$. 
	If $\fast(M_G,M_H)$ is a maximum $1$-matching of $G\star H$, then 
	$\fcast(M_G,M_H)$ and  $\fbast(M_G,M_H)$ are perfect $1$-matchings of $G\star H$.
\end{corollary}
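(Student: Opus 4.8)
The plan is to reduce everything to Theorem~\ref{thm:fast-1-matching}, which already characterizes when $\fast(M_G,M_H)$ is a maximum $1$-matching of a strong or lexicographic product. First I would invoke that theorem: since $\fast(M_G,M_H)$ is assumed to be a maximum $1$-matching of $G\star H$ and both $G$ and $H$ have non-empty edge set, statement~(1) of Theorem~\ref{thm:fast-1-matching} holds, hence so do statements~(2) and~(3). In particular, $M_G$ is a perfect $1$-matching of $G$, $M_H$ is a perfect $1$-matching of $H$, and $\fast(M_G,M_H)$ itself is a perfect $1$-matching of $G\star H$.

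Next I would handle $\fcast(M_G,M_H)$. Since $M_G$ and $M_H$ are perfect, the sets $U_G$ and $U_H$ of unmatched vertices are empty, so $\MU_G=\MU_H=\emptyset$ by Definition~\ref{def:bast}. Consequently $\fcast(M_G,M_H)=\fast(M_G,M_H)\cup\MU_G\cup\MU_H=\fast(M_G,M_H)$, which we just argued is a perfect $1$-matching of $G\star H$.

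For $\fbast(M_G,M_H)$ I would argue by counting. As $M_G$ and $M_H$ are, in particular, $1$-matchings, Corollary~\ref{cor:matching-fac} gives that $\fbast(M_G,M_H)$ is a $1$-matching of $G\star H$. Applying Proposition~\ref{prop:maxexprequal} with $k=1$ and $u_G=u_H=0$ yields $|\fbast(M_G,M_H)|=\frac{1}{2}\left(n_Gn_H-u_Gu_H\right)=\frac{1}{2}n_Gn_H=\frac{1}{2}|V_{G\star H}|$, which by Lemma~\ref{lem:muFormula} is exactly the size of a perfect $1$-matching; hence $\fbast(M_G,M_H)$ is perfect. Alternatively, since $G$ has a perfect $1$-matching, Corollary~\ref{cor:perfect-k} guarantees directly that $\fbast(M_G,M_H)$ is a perfect $1$-matching of $G\star H$.

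No real obstacle is expected here: the entire difficulty is encapsulated in Theorem~\ref{thm:fast-1-matching}, and once that is in hand the two remaining claims follow immediately from the definitions of $\fcast$ and $\fbast$ together with the cardinality formulas. The only point requiring a moment's care is that $\fast(M_G,M_H)$ is non-empty, but this follows since $G$ and $H$ having non-empty edge sets forces $G\star H$ to have edges for $\star\in\{\sprod,\lprod\}$, so a maximum $1$-matching cannot be empty; this is already dealt with inside the proof of Theorem~\ref{thm:fast-1-matching}.
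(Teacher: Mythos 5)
Your proof is correct. It opens exactly as the paper does, by invoking Theorem~\ref{thm:fast-1-matching} to conclude that $M_G$ and $M_H$ are perfect $1$-matchings, but it then diverges in how it disposes of the two remaining claims. For $\fcast(M_G,M_H)$ the paper routes through Prop.~\ref{prop:matching-cast}(M3) and the cardinality formula of Prop.~\ref{prop:size-cast}, whereas you simply observe that perfectness of $M_G$ and $M_H$ forces $\MU_G=\MU_H=\emptyset$, hence $\fcast(M_G,M_H)=\fast(M_G,M_H)$, which is already known to be perfect by part~(3) of Theorem~\ref{thm:fast-1-matching} --- a cleaner and more direct argument. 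For $\fbast(M_G,M_H)$ the paper transfers the size of $\fcast(M_G,M_H)$ via Prop.~\ref{prop:maxfacbst} (which itself rests on the $\mathcal{W}_k$ machinery of Theorems~\ref{thm:max-Wk-fbast} and~\ref{thm:max-Wk-fcast}), whereas you compute $|\fbast(M_G,M_H)|=\tfrac{1}{2}n_Gn_H$ directly from Prop.~\ref{prop:maxexprequal} with $u_G=u_H=0$ and conclude perfectness from Lemma~\ref{lem:muFormula}; this is more elementary and avoids the heavier intermediate results. One small caveat: your ``alternative'' appeal to Corollary~\ref{cor:perfect-k} slightly over-reads its statement, which only asserts that $G\star H$ \emph{has} a perfect $k$-matching, not that $\fbast(M_G,M_H)$ is one (although its proof does establish the stronger fact); your primary counting argument does not suffer from this, so nothing is lost.
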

\begin{proof}
Assume that $\fast(M_G,M_H)$ is a maximum $1$-matching of $G\star H$. 
By Thm.\ \ref{thm:fast-1-matching}, this is, if and only if 
$M_G$ and $M_H$ are perfect 1-matchings of $G$ and $H$, respectively.
Hence, Prop.\ \ref{prop:matching-cast}(M3) is satisfied and
Prop.\ \ref{prop:size-cast} implies that 
$|\fcast(M_G,M_H)| = \frac{1}{2}n_Gn_H = \frac{1}{2}n_{G\star H}$, 
the size of a perfect 1-matching of $G\star H$. 
By Prop.\ \ref{prop:maxfacbst}, $\fcast(M_G,M_H)$ and $\fbast(M_G,M_H)$
must have the same size, which completes the proof.
\end{proof}

The converse of Cor.\ \ref{cor:fast-implies-fcbast} is not satisfied
in general. To see this, consider the graph $K_2\sprod K_3\simeq K_2\lprod K_3$
in  Fig. \ref{fig:exmpl-dprod} (right). Here,  $\fcast(M_G,M_H)$ is a 
perfect $1$-matching of $G\star H$ although $\fast(M_G,M_H)$ is not.

\section{Summary}
\label{sec:sum}

In this contribution, we considered the problem of finding $k$-matchings or,
equivalently, $k$-regular subgraphs in graph products. Since this problem is NP-hard
in general, we focused on possible constructions $\fbast(M_G,M_H)$, $\fcast(M_G,M_H)$
and $\fast(M_G,M_H)$ for such $k$-matchings in graph products $G\star H$ based on
subsets $M_G$ and $M_H$ of the edges of the respective factors $G$ and $H$. We
characterized under which conditions these constructions allow us to determine
maximum $k$-matchings in the products which leads to well-behaved $k$-matchings. All
these constructions were weak-homomorphism preserving and we showed that our
constructions have always maximum size among all weak-homomorphism preserving
$k$-matchings for specific products $\star$, which are comprised in the set
$\mathcal{W}_k(G\star H, M_G,M_H)$. 

These results offer many open interesting questions for further research. It would be
of interest to understand the graph classes whose products always have well-behaved $k$-matchings. 
Moreover, there are plenty of other weak-homomorphism preserving
$k$-matchings $\fM(M_G, M_H)$ that can be constructed in a reasonable way
from matchings in the factors, e.g., constructions based on $k_G$-matchings and
$k_H$-matchings that yield $k_G+k_H$-matchings in the products. Note,
$\fbast(M_G,M_H), \fcast(M_G,M_H), \fast(M_G,M_H)\notin \mathcal{W}_k(G\star H,
M_G,M_H)$ may be possible, in particular, if none of $\fbast(M_G,M_H),
\fcast(M_G,M_H)$ and $\fast(M_G,M_H)$ yield a $k$-matching. Hence, it is of interest
to provide different constructions $\fM(M_G, M_H)$ that cover these cases.
In particular, can one characterize
(near-perfect) weak-homomorphism preserving $k$-matchings in general? When are are
weak-homomorphism preserving $k$-matchings also maximum $k$-matchings? Which type of factors
yield products for which all (none, exactly one) 
maximum $k$-matchings are  also weak-homomorphism preserving?
  
Furthermore, the factorization of the considered graphs into two factors is, in
general, not unique. Thus, the question arises as whether it is possible to
characterize graphs $G$ for which the $k$-matchings are well-behaved w.r.t. every
(resp. some or none) factorization of $G$ into two or more non-trivial factors. 

Note the problem of finding a $k$-regular subgraph in graph products
   is NP-hard in general. Hence it might be of interest to study further constructions
   and to design efficient heuristics or even exact algorithms to solve this 
   intractable problem.

\bibliographystyle{spbasic}
\bibliography{biblio}

\end{document}